\theoremstyle{plain}
\newtheorem{corollary}{Corollary}[section]
\newtheorem{dfn}[corollary]{Definition}
\newtheorem{lemma}[corollary]{Lemma}
\newtheorem{prop}[corollary]{Proposition}
\newtheorem{rem}[corollary]{Remark}
\newtheorem{thm}[corollary]{Theorem}
\newfont{\sBlackboard}{msbm10 scaled 1200}
\newcommand{\mylabel}[1]{\label{#1}
    \ifx\undefined\stillediting
    \else \fbox{$#1$}\fi }
\newcommand{\BE}{\begin{equation}}
\newcommand{\EEQ}{\end{equation}}
\newcommand{\rfb}[1]{\mbox{\rm
        (\ref{#1})}\ifx\undefined\stillediting\else:\fbox{$#1$}\fi}
\newfont{\Blackboard}{msbm10 scaled 1200}
\newfont{\roma}{cmr10 scaled 1200}
\newcommand{\bb}{\begin{equation}}
\newcommand{\bbb}{\end{equation}}
\newcommand{\mm}    {{\hbox{\hskip 0.5pt}}}
\newcommand{\bluff} {{\hbox{\raise 15pt \hbox{\mm}}}}
\def\section{\@startsection {section}{1}{\z@}{-3.5ex plus -1ex minus
        -.2ex}{2.3ex plus .2ex}{\large\bf}}
\numberwithin{equation}{section}
\begin{document}
\title{On the  fractional Musielak-Sobolev spaces in $\mathbb{R}^d$: Embedding results \& applications}
\author{Anouar Bahrouni\footnote{
bahounianouar@yahoo.fr ; Anouar.Bahrouni@fsm.rnu.tn}\ \, \ Hlel
Missaoui\footnote{hlel.missaoui@fsm.rnu.tn}\ \ and \ Hichem Ounaies
\footnote{
hichem.ounaies@fsm.rnu.tn}\\
Mathematics Department, Faculty of Sciences, University of Monastir,\\ 5019 Monastir, Tunisia}
\maketitle


\begin{abstract}This paper deals with new continuous and compact embedding theorems for
the fractional Musielak-Sobolev spaces in $\mathbb{R}^d$. As an application, using the variational methods, we obtain the existence  of nontrivial weak
solution for the following Schr\"odinger equation
$$
    (-\Delta)_{g_{x,y}}^s u+V(x)g(x,x,u)=b(x)\vert u\vert^{p(x)-2}u,\ \text{for all}\ x\in \mathbb{R}^d,$$
where  $(-\Delta)_{g_{x,y}}^s$ is the fractional Museilak
$g_{x,y}$-Laplacian, $V$ is a potential function, $b\in
L^{\delta^{'}(x)}(\mathbb{R}^d)$, and $p,\delta\in
C\left(\mathbb{R}^d,(1,+\infty)\right)\cap
L^{\infty}(\mathbb{R}^d)$. We would like to mention that the theory
of the fractional Musielak-Sobolev spaces
 is in a developing state and there are few papers in this topic, see \cite{M1,M8,M9}.   Note that, all these latter works dealt with bounded
case and there are no results devoted for the fractional Musielak-Sobolev spaces in $\mathbb{R}^d$. Since the embedding results are crucial in applying variational methods, this work will provide a bridge between the fractional Mueislak-Sobolev theory and PDE's.
\end{abstract}

{\small \textbf{Keywords:} Fractional Musielak-Sobolev space, Continuous and compact embedding, Strauss compact embedding, Lions-type lemma, Existence of solutions.} \\
{\small \textbf{2010 Mathematics Subject Classification:} 35J50, 35J48, 35Q60}


\section{Introduction}
Fractional Orlicz-Sobolev spaces (see \cite{M28}) and fractional
Sobolev spaces with variable exponent (see \cite{M33}) are two
distinct extensions of classical fractional Sobolev spaces (see
\cite{M26}), and they are two special kinds of fractional
Musielak-Sobolev spaces (see \cite{M8,M9}). The importance of the
Sobolev-type embedding theorems is well known. We refer the reader
to \cite{M13,Cianchi5} for the embedding of fractional
Orlicz-Sobolev spaces and to \cite{M12,M33} for the embeddings of
fractional Sobolev spaces with variable exponent. The  aim of the
present paper is to establish a continuous and a compact embedding
theorems (Strauss theorem) for the fractional Musielak-Sobolev
spaces $\left(W^{s,G_{x,y}}(\Omega)\right)$ in $\mathbb{R}^d$. This
is a new research topic.\\ Precisely, our main contributions are the
following:
\begin{enumerate}
    \item[$(1)$] We prove a continuous embedding theorem for the fractional Musielak-Sobolev space in the whole space  $\mathbb{R}^d$.
    \item[$(2)$] We prove a
     compact embedding theorem for the space $W^{s,G_{x,y}}(\mathbb{R}^{d})$ with a weight function.
    \item[$(3)$]  We prove a Lions-type lemma to the modular funtion.
    \item[$(4)$] We prove a Strauss compact embedding theorem for the radial fractional Musielak-Sobolev space.
    \item[$(5)$] We obtain the existence of a nontrivial weak solution for a class of this new
    kind of nonlocal problems.
\end{enumerate}
To the best of our knowledge, this is the first work dealing with embedding theorems for the fractional Musielak-Sobolev spaces in the whole space $\mathbb{R}^d$.\\

Recently, the study of nonlinear equations involving the fractional Laplacian $(-\Delta)^s$, $0<s<1$, has gained
tremendous popularity due to their intriguing analytic structure and in view of several applications
in different subjects, such as Optimization, Finance, Anomalous Diffusion, Phase
Transition, Flame propagation, Minimal surface. Note that, the adequate framework for these types of
nonlinear equations ( involving the fractional Laplace operator ) is the well-known fractional Sobolev space.  For the basic
properties of fractional Sobolev spaces and the operator $(-\Delta)^s$ with applications to partial differential equations, we
refer the interested reader to \cite{M26,M62} and the references therein.\\

 As we mentioned at the begging, the fractional Sobolev space has two distinct extensions. The first one is the fractional Sobolev space
  with variable exponent which was firstly introduced in 2017 by Kaufmann et al. in \cite{M33}. After that, some studies on this
  context have been performed by using different approaches, see \cite{M2,M6,M11,M12,M28,M60,M101,M46}. In these last references,
  the authors established a compact embedding theorems and proved some further qualitative properties of the fractional Sobolev space
with variable exponent and the fractional p(x)-Laplace operator. The second extension is the fractional Orlicz-Sobolev spaces. These new
 spaces built a  bridge between the fractional order theory and the Orlicz-Sobolev theory. As far as we know, J. Fern\`andez. Bonder et al. firstly introduced
the fractional Orlicz-Sobolev space
and the new fractional Orlicz g-Laplace operator, see \cite{M28}. After that in 2020, S. Bahrouni, A. M. Salort, A. Cianchi, et al. proved some basic results as
the embedding theorems and the fundamental topological properties which allow us to apply the
variational approaches, see \cite{Cianchi1,Cianchi2,Cianchi3,47,Cianchi4,Cianchi5,M13,M14,M15,M28,M29,ho,M47}.\\

A natural question is to see if there exists a more general functional space that includes the both extensions at the same time. Very recently, Azroul et al. \cite{M8,M9}, gave the answer to the latter question by considering the new fractional Musielak-Sobolev space $W^{s,G_{x,y}}(\Omega)$ which is the natural generalization of fractional Sobolev with variable exponent and fractional Orlicz-Sobolev spaces. Moreover, they defined the new fractional Musielak $g_{x,y}$-Laplace operator $(-\Delta)^s_{g_{x,y}}$, for all $s\in(0,1)$
\begin{align*}
(-\Delta)^s_{g_{x,y}}u(x):&=\text{p.v.}\int_{\mathbb{R}^d}g_{x,y}\left(\frac{\vert u(x)-u(u)\vert}{\vert x-y\vert^{s}}\right)\frac{dy}{\vert x-y\vert^{d+s}},\ \ \text{for all}\ x\in \mathbb{R}^d
 \end{align*}
 where p.v. is a commonly used abbreviation for "in the principle value sense", $\displaystyle{G_{x,y}(t):=\int_0^{\vert t\vert}g_{x,y}(\tau)d\tau}$, and
 $g_{x,y}:\Omega\times\Omega\times\mathbb{R}\longrightarrow \mathbb{R}$ are a Carath\'eodory
functions that satisfy some suitable assumptions which will be
mentioned later in Section 2. In \cite{M9}, the authors established
a continuous and  compact embedding theorems for the fractional
Musielak-Sobolev space into Musielak spaces in the bounded case. In
\cite{M10}, by using the Ekeland’'s principle in combination with
a direct variational approach,
 the authors  proved the existence of weak solutions for a
nonlocal problem driven by the fractional $g_{x,y}$-Laplacian with a
Neumann and Robin boundary condition. Very recently (in $2023$) J.C.
de Albuquerque et al. proved  some abstract results on the
perspective of the fractional Musielak- Sobolev spaces, such as:
uniform convexity, Radon-Riesz property with respect to the modular
function, $(S_+)$-property, Brezis-Lieb type Lemma to the modular
function and monotonicity results. Add to that, they studied the
existence of weak solutions to the
 problem
$$
\left\lbrace
\begin{array}{ll}
 (-\Delta)^s_{g_{x,y}}u=f(x,u), & \text{in}\ \Omega,\\
 u=0,& \text{on}\ \mathbb{R}^d\setminus\Omega,
\end{array}
\right.
$$
 where $d\geq 2$,
 $\Omega\subset\mathbb{R}^d$ is a bounded domain with Lipschitz boundary
 while $f:\Omega\times\mathbb{R}\longrightarrow \mathbb{R}$
is a Carath\'eodory function, see \cite{M1}.\\

To the best of our  Knowledge, the literature on the fractional
Musielak-Sobolev spaces  and their applications is quite few, see
\cite{M1,M8,M9,M10}. Note that, all these latter works dealt with
bounded case and there are no results devoted for the fractional
Musielak-Sobolev spaces in $\mathbb{R}^d$.
 Motivated by the above discussion, our main goal in this paper is to establish a continuous and a compact embedding theorems for the fractional Musielak-Sobolev spaces in $\mathbb{R}^d$.\\

Our main results are summarized in the following theorems:
 \begin{thm}[Continuous embedding]\label{thm1} Let $G_{x,y}$ be a generalized N-function satisfying the assumptions $(g_1)-(g_5)$ and \eqref{bf}. Then,
 \begin{enumerate}
     \item[$(1)$] the embedding $W^{s,G_{x,y}}(\mathbb{R}^d)\hookrightarrow L^{\widehat{G}_x^*}(\mathbb{R}^d)$ is continuous;
     \item[$(2)$] for any generalized N-function $\widehat{A}_x$ satisfying \eqref{bf},
      \begin{equation}\label{cA}
     1<\ell_{\widehat{A}_x}\leq \frac{\widehat{A}^{'}(x,t)t}{\widehat{A}(x,t)}\leq m_{\widehat{A}_x}<+\infty,\ \ \text{for all}\ x\in \Omega\ \text{and all}\ t>0,
     \end{equation}
      \begin{equation}\label{2eq60}        \widehat{A}_x\prec\prec\widehat{G}_{x}^*,
    \end{equation}
    and
\begin{equation}\label{mla1b}
    \lim_{|t|\to 0}\frac{\widehat{A}_x(t)}{\widehat{G}_x(t)}=0,\ \ \text{uniformly in }\ x\in \mathbb{R}^d,
    \end{equation}
     the embedding $W^{s,G_{x,y}}(\mathbb{R}^d)\hookrightarrow L^{\widehat{A}_x}(\mathbb{R}^d)$ is continuous.
 \end{enumerate}
 \end{thm}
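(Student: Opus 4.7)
The plan is to deduce part (1) by localizing and reducing to the bounded-domain embedding already established in \cite{M9}. Cover $\mathbb{R}^d$ by a uniformly locally finite family of unit balls $\{B_i\}_{i\in\mathbb{N}}$ with a finite overlap constant $N$. Since the hypotheses $(g_1)$--$(g_5)$ on $G_{x,y}$ are uniform in $(x,y)\in\mathbb{R}^d\times\mathbb{R}^d$, the bounded-case embedding of \cite{M9} applies on each $B_i$ with a constant $C$ independent of $i$, yielding a modular inequality
\[
\int_{B_i}\widehat{G}_x^*(|u(x)|)\,dx \leq C\left[\int_{B_i}\widehat{G}_x(|u|)\,dx + \iint_{B_i\times B_i} G_{x,y}\!\left(\frac{|u(x)-u(y)|}{|x-y|^{s}}\right)\frac{dx\,dy}{|x-y|^{d}}\right].
\]
Summing over $i$ and invoking finite overlap, the right-hand side is bounded by $N C$ times the global Musielak-Sobolev modular of $u$, while the left-hand side equals the global modular of $u$ in $L^{\widehat{G}_x^*}(\mathbb{R}^d)$. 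Finally, the $\Delta_2$-type inequality encoded in \eqref{cA} applied to $\widehat{G}_x^*$ converts this modular estimate into the claimed norm embedding.

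For part (2), the plan is to interpolate between $L^{\widehat{G}_x}(\mathbb{R}^d)$---in which $W^{s,G_{x,y}}(\mathbb{R}^d)$ sits by definition---and $L^{\widehat{G}_x^*}(\mathbb{R}^d)$ provided by part (1). Given $\varepsilon>0$, condition \eqref{mla1b} furnishes $\delta>0$ with $\widehat{A}_x(t)\leq\varepsilon\,\widehat{G}_x(t)$ uniformly in $x$ when $|t|\leq\delta$, whereas \eqref{2eq60} (i.e.\ $\widehat{A}_x\prec\prec\widehat{G}_x^*$) supplies a constant so that $\widehat{A}_x(t)\leq\varepsilon\,\widehat{G}_x^*(t)$ when $|t|\geq\delta$. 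Splitting the modular integral of $\widehat{A}_x(|u|)$ at threshold $\delta$ therefore gives
\[
\int_{\mathbb{R}^d}\widehat{A}_x(|u|)\,dx \leq \varepsilon\int_{\mathbb{R}^d}\widehat{G}_x(|u|)\,dx + \varepsilon\int_{\mathbb{R}^d}\widehat{G}_x^*(|u|)\,dx,
\]
and both terms on the right are controlled by the Musielak-Sobolev norm of $u$, the first trivially and the second via part (1). Condition \eqref{cA} for $\widehat{A}_x$ then promotes this modular bound to a norm bound.

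The main obstacle I anticipate lies in part (1): ensuring that the embedding constant from \cite{M9} is truly uniform in the center of each ball $B_i$. This requires verifying that the Sobolev-conjugate construction $\widehat{G}_x \mapsto \widehat{G}_x^*$ behaves compatibly under translation and that the bounded-case constant depends only on $s$, the uniform growth bounds $\ell_G, m_G$ of $\widehat{G}_x$, and the radius---not on the center---of the ball. A secondary technical point is that discarding cross-ball Gagliardo interactions (pairs $(x,y)$ with $x\in B_i$, $y\notin B_i$) is legitimate here because we only need to bound the local modular by the global one; those discarded terms are automatically absorbed into the full $\mathbb{R}^d\times\mathbb{R}^d$ double integral.
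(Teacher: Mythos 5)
Your overall localization strategy for part (1) is the same as the paper's (the paper uses the disjoint annuli $B_i=\{i\le|x|<i+1\}$ rather than overlapping balls, but that difference is cosmetic). The crucial issue is the step where you assert a \emph{linear} modular inequality
\[
\int_{B_i}\widehat{G}_x^*(|u|)\,dx \leq C\Big[\int_{B_i}\widehat{G}_x(|u|)\,dx + \iint_{B_i\times B_i} G_{x,y}\Big(\tfrac{|u(x)-u(y)|}{|x-y|^{s}}\Big)\tfrac{dx\,dy}{|x-y|^{d}}\Big]
\]
as a consequence of the bounded-domain \emph{norm} embedding of Theorem~\ref{thm2}. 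This does not follow: converting the norm inequality $\|u\|_{L^{\widehat{G}_x^*}(B_i)}\le C_0\|u\|_{(B_i)}$ to a modular inequality involves the powers $g^\pm_*$ on the left and $g^\pm$ on the right (Lemmas~\ref{lem1},~\ref{lem2},~\ref{rho}), and since $g^\pm_*\ne g^\pm$, a linear bound simply is not true for arbitrary $u$. The paper's proof works precisely because it first normalizes $\|u\|_{(\mathbb{R}^d)}=1$, which forces $\rho(u)=1$ and hence each local modular to lie in $[0,1]$; then it splits into the two cases $\|u\|_{L^{\widehat{G}_x^*}(B_i)}\ge 1$ and $<1$ so that in each case the exponent $\tfrac{g^\pm_*}{g^\pm}\ge 1$ can be dropped, since a number in $[0,1]$ raised to a power $\ge 1$ only decreases. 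Without this normalization-and-case-split step, your sum over $i$ has nothing to sum. Your final paragraph also correctly flags, but does not resolve, the uniformity-in-$i$ of the bounded-case constant --- that concern is present in the paper's argument as well.

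For part (2), there is a genuine gap in the interpolation argument. From $\eqref{mla1b}$ you obtain a small threshold $\delta>0$ controlling $\widehat{A}_x(t)\le\varepsilon\widehat{G}_x(t)$ for $|t|\le\delta$. But $\eqref{2eq60}$, namely $\widehat{A}_x\prec\prec\widehat{G}_x^*$, only controls the ratio $\widehat{A}_x/\widehat{G}_x^*$ \emph{asymptotically as $|t|\to\infty$}; it produces some possibly large $T>\delta$ beyond which the bound holds, but gives no control on the intermediate band $\delta\le|t|\le T$. Your two-way split at the single threshold $\delta$ therefore leaves that band completely unaccounted for. The paper's proof uses a three-way split over $\{|u|\le\delta\}$, $\{\delta<|u|<T\}$, and $\{|u|\ge T\}$: on the middle set it bounds $\widehat{A}_x(u)\le\widehat{A}_x(T)$, then uses $\eqref{cA}$ and $\eqref{bf}$ to see $\widehat{A}_x(T)\le C\max\{T^{\ell_{\widehat{A}_x}},T^{m_{\widehat{A}_x}}\}$, and finally shows via the estimate \eqref{ed290} that $|\{\delta<|u|<T\}|<\infty$, so that contribution is finite. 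You need this third piece; without it the claimed bound $\int\widehat{A}_x(|u|)\,dx\le\varepsilon\int\widehat{G}_x(|u|)\,dx+\varepsilon\int\widehat{G}_x^*(|u|)\,dx$ is not established.
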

 Next, we define the following subspace of $W^{s,G_{x,y}}(\mathbb{R}^{d})$:
 $$\mathbb{E}:=\bigg{\{}u\in W^{s,G_{x,y}}(\mathbb{R}^{d}):\ \int_{\mathbb{R}^{d}}V(x)\widehat{G}_x(u)dx<\infty\bigg{\}},$$
 where $V$ is a potential function satisfying:
 \begin{enumerate}
    \item[$(V_1)$] there exists $V_0 > 0$ such that $V(x)\geq V_0$ for any $x\in \mathbb{R}^d$;
    \item[$(V_2)$]the set $\lbrace x\in \mathbb{R}^d:\ V(x)<L\rbrace$ has finite Lebesgue measure for each $L>0$.
\end{enumerate}
 \begin{thm}[Compact embedding]\label{comp1}
    Assume that $(g_1)-(g_5)$ and $(V_{1})$--$(V_{2})$ hold. Then, the embedding $\mathbb{E}\hookrightarrow L^{\widehat{G}_x}(\mathbb{R}^d)$ is compact.
\end{thm}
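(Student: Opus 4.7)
The plan is to take a bounded sequence $(u_n)\subset\mathbb{E}$, extract (by reflexivity of the Musielak--Sobolev framework) a weakly convergent subsequence $u_n\rightharpoonup u$ in $\mathbb{E}$, and prove that the modular $\int_{\mathbb R^d}\widehat G_x(u_n-u)\,dx$ tends to $0$; this will imply norm convergence in $L^{\widehat G_x}(\mathbb R^d)$ by the $\Delta_2$-type bounds embedded in $(g_1)$--$(g_5)$. The strategy is a three-part decomposition of $\mathbb R^d$ into a large ball $B_R$, a far set where $V$ is big, and an intermediate region of small Lebesgue measure, using the sublevel set $A_L:=\{x\in\mathbb R^d:V(x)<L\}$ supplied by $(V_2)$.

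\emph{Local part.} Since $(u_n)$ is bounded in $W^{s,G_{x,y}}(\mathbb R^d)$, its restrictions to $B_R$ are bounded in $W^{s,G_{x,y}}(B_R)$, and the compact embedding $W^{s,G_{x,y}}(B_R)\hookrightarrow L^{\widehat G_x}(B_R)$ established in the bounded-domain case in \cite{M9} gives $u_n\to u$ strongly in $L^{\widehat G_x}(B_R)$, hence $\int_{B_R}\widehat G_x(u_n-u)\,dx\to 0$ for each fixed $R$.

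\emph{Far part.} On $(\mathbb R^d\setminus B_R)\setminus A_L$ one has $V(x)\ge L$, so
\[
\int_{(\mathbb R^d\setminus B_R)\setminus A_L}\widehat G_x(u_n-u)\,dx\;\le\;\frac{1}{L}\int_{\mathbb R^d}V(x)\widehat G_x(u_n-u)\,dx\;\le\;\frac{C}{L},
\]
by boundedness of the weighted modular in the definition of $\mathbb{E}$ (together with convexity to absorb $u$). This is made arbitrarily small by taking $L$ large.

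\emph{Intermediate part.} On $(\mathbb R^d\setminus B_R)\cap A_L$ we exploit $(V_2)$: $|A_L|<\infty$ forces $|A_L\setminus B_R|\to 0$ as $R\to\infty$. By Theorem~\ref{thm1}(1), $(u_n-u)$ is bounded in $L^{\widehat G_x^*}(\mathbb R^d)$; combined with a relation of the type $\widehat G_x\prec\prec\widehat G_x^*$, this yields equi-absolute continuity of the modular $\int_E\widehat G_x(u_n-u)\,dx$ with respect to Lebesgue measure. The standard truncation trick (split the integrand at a height $M$: on $\{|u_n-u|>M\}$ use $\widehat G_x(t)/\widehat G_x^*(t)\to 0$ as $|t|\to\infty$; on $\{|u_n-u|\le M\}$ bound pointwise by $\sup_x\widehat G_x(M)$ times the Lebesgue measure of $E$) then gives smallness uniformly in $n$. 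Choosing first $L$, then $R$, then passing to the limit in $n$ using the local step, one gets $\int_{\mathbb R^d}\widehat G_x(u_n-u)\,dx\to 0$.

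The main obstacle I anticipate is the equi-integrability step: Musielak moduli do not satisfy the usual Hölder inequality cleanly, so one must either justify the truncation argument carefully using the $(g_1)$--$(g_5)$ growth bounds together with the Sobolev-conjugate relation $\widehat G_x\prec\prec\widehat G_x^*$, or recast it as an application of Theorem~\ref{thm1}(2) with an auxiliary $\widehat A_x$ satisfying \eqref{cA}--\eqref{mla1b}. A subsidiary technical point is ensuring that the $\Delta_2$-condition implicit in \eqref{cA} lets us move freely between modular convergence and norm convergence in $L^{\widehat G_x}(\mathbb R^d)$.
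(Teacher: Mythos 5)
Your proof is correct and reaches the paper's conclusion, but it uses a genuinely different technical route for the tail estimate. The paper first invokes Br\'ezis--Lieb to reduce the problem to proving convergence of the modulars $\int_{\mathbb R^d}\widehat G_x(u_n)\,dx\to\int_{\mathbb R^d}\widehat G_x(u)\,dx$, whereas you work directly with $\int_{\mathbb R^d}\widehat G_x(u_n-u)\,dx$; under the $\Delta_2$-condition guaranteed by $(g_4)$ these are equivalent reductions. The decomposition itself is the same two-set split outside a ball, namely $\{V\ge L\}$ handled by the weighted modular bound and the small-measure set $\{V<L\}$ handled by $(V_2)$. Where you diverge is on that small-measure set: the paper constructs an auxiliary generalized N-function $\widehat R_x$ (Lemma~\ref{Aux}), applies the Musielak H\"older inequality to get $\int_{\mathcal B}\widehat G_x(u_n)\,dx\le 2\|\widehat G_x(u_n)\|_{L^{\widehat R_x}}\|\chi_{\mathcal B}\|_{L^{\widetilde{\widehat R}_x}}$, and then uses Lemmas~\ref{Aux1} and~\ref{lem4} to turn $\|\chi_{\mathcal B}\|_{L^{\widetilde{\widehat R}_x}}$ into a small power of $|\mathcal B|$. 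You instead truncate at height $M$: on $\{|u_n-u|>M\}$ you use the uniform vanishing $\widehat G_x(t)/\widehat G_x^*(t)\to 0$ (which does hold here because $g^+<g^-_*$ together with \eqref{bf} gives $\widehat G_x\prec\prec\widehat G_x^*$ uniformly in $x$) combined with the boundedness of $\int\widehat G_x^*(u_n-u)\,dx$ from Theorem~\ref{thm1}(1), while on $\{|u_n-u|\le M\}$ you bound pointwise by $\sup_x\widehat G_x(M)$ (finite by \eqref{bf} and Lemma~\ref{lem1}) times the Lebesgue measure of the set. Your route bypasses the auxiliary-function machinery of Lemmas~\ref{Aux}, \ref{Aux1} and \ref{lem4} entirely and is more elementary, at the small cost of having to verify the uniform-in-$x$ version of $\widehat G_x\prec\prec\widehat G_x^*$ explicitly; the paper's route is more uniform with its other arguments (the same $\widehat R_x$ construction is reused elsewhere) but heavier. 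Both are complete proofs once the hand-waved details (uniformity of the conjugate comparison, finiteness of $\sup_x\widehat G_x(M)$, and the $\Delta_2$ absorption of $u$ in the weighted modular) are spelled out.
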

As a consequence of the last theorem, we give the following result:
\begin{thm}[Compact embedding]\label{compact}
    Assume that $(g_1)-(g_5)$ and $(V_{1})$--$(V_{2})$ hold. Let $\widehat{A}_x$ be a generalized N-function satisfying \eqref{bf}, \eqref{cA}, \eqref{2eq60},
  and at least one of the following conditions:
    \begin{itemize}
        \item[$(1)$]  The following limit holds
        \begin{equation}\label{mla1}
        \limsup_{\vert t \vert\rightarrow0}\frac{\widehat{A}_x(\vert t\vert)}{\widehat{G}_x(\vert t\vert)}<+\infty,\ \ \text{uniformly in }\ x\in \mathbb{R}^d.\tag{$G_1$}
        \end{equation}
    \item[$(2)$]There exists $a\in(0,1)$ such that
   \begin{equation}\label{mla2}
\widehat{A}_x(\vert t\vert) \leq  \widehat{G}_x(|t|)^a\widehat{G}_x^*(\vert t\vert)^{1-a},\ \ \text{for all}\ \vert t\vert\leq 1\ \ \text{and}\ x\in \mathbb{R}^d.\tag{$G_2$}
\end{equation}
    \end{itemize}
    Then, the space $\mathbb{E}$ is compactly embedded into $L^{\widehat{A}_x}(\mathbb{R}^{d})$.
\end{thm}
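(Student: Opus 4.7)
The plan is to interpolate between the compact embedding $\mathbb{E}\hookrightarrow L^{\widehat{G}_x}(\mathbb{R}^d)$ supplied by Theorem~\ref{comp1} and the continuous embedding $\mathbb{E}\hookrightarrow L^{\widehat{G}_x^*}(\mathbb{R}^d)$ inherited from Theorem~\ref{thm1}(1), gluing them together via the structural hypothesis $(G_1)$ or $(G_2)$. Since \eqref{cA} forces $\widehat{A}_x$ to satisfy a uniform $\Delta_2$-type condition, convergence in the Luxemburg norm of $L^{\widehat{A}_x}(\mathbb{R}^d)$ is equivalent to modular convergence; hence it suffices to fix a weakly convergent sequence $u_n\rightharpoonup u$ in $\mathbb{E}$, set $v_n=u_n-u$, and prove that $\rho_{\widehat{A}_x}(v_n):=\int_{\mathbb{R}^d}\widehat{A}_x(|v_n|)\,dx\to 0$. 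Theorem~\ref{comp1} delivers $\rho_{\widehat{G}_x}(v_n)\to 0$, while Theorem~\ref{thm1}(1) yields a uniform bound $\rho_{\widehat{G}_x^*}(v_n)\leq M$; these are the two raw ingredients to be combined.

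Under hypothesis $(G_1)$, my key step is to establish the pointwise inequality
\[
\widehat{A}_x(t)\leq \varepsilon\,\widehat{G}_x^*(t)+C_\varepsilon\,\widehat{G}_x(t),\qquad \forall t\geq 0,\ \forall x\in\mathbb{R}^d,
\]
for every $\varepsilon>0$. Near $t=0$, \eqref{mla1} gives $\widehat{A}_x(t)\leq C\widehat{G}_x(t)$ on $[0,\delta_0]$ uniformly in $x$; for large $t$, the relation $\widehat{A}_x\prec\prec \widehat{G}_x^*$ from \eqref{2eq60} gives $\widehat{A}_x(t)\leq \varepsilon\,\widehat{G}_x^*(t)$ on $[T_\varepsilon,\infty)$; the intermediate strip $[\delta_0,T_\varepsilon]$ is then absorbed using the polynomial upper and lower bounds encoded in \eqref{cA} applied simultaneously to $\widehat{A}_x$, $\widehat{G}_x$, and $\widehat{G}_x^*$. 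Integrating and letting $n\to\infty$ yields $\limsup_n\rho_{\widehat{A}_x}(v_n)\leq \varepsilon M$, and the arbitrariness of $\varepsilon$ concludes this case.

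Under hypothesis $(G_2)$, I would instead split $\mathbb{R}^d=\{|v_n|\leq 1\}\cup\{|v_n|>1\}$. On the first piece, \eqref{mla2} together with H\"older's inequality with exponents $1/a$ and $1/(1-a)$ yields
\[
\int_{\{|v_n|\leq 1\}}\widehat{A}_x(|v_n|)\,dx\leq \rho_{\widehat{G}_x}(v_n)^{a}\,\rho_{\widehat{G}_x^*}(v_n)^{1-a}\leq \rho_{\widehat{G}_x}(v_n)^{a}M^{1-a}\longrightarrow 0.
\]
On the second piece, a Chebyshev-type estimate gives $|\{|v_n|>1\}|\leq \widehat{G}_x(1)^{-1}\rho_{\widehat{G}_x}(v_n)\to 0$, after which \eqref{2eq60} combined with the uniform $L^{\widehat{G}_x^*}$-bound provides uniform integrability of $\widehat{A}_x(|v_n|)$ on this set, killing the remaining contribution.

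The main obstacle I anticipate is the uniform-in-$x$ bookkeeping in case $(G_1)$: \eqref{mla1} is an asymptotic condition at $0$, \eqref{2eq60} an asymptotic condition at $\infty$, and \eqref{cA} controls the growth in between, so producing a single inequality with constants $\delta_0,T_\varepsilon,C_\varepsilon$ independent of $x\in\mathbb{R}^d$ requires careful exploitation of the uniformity already built into $(g_1)$--$(g_5)$. A secondary subtlety in case $(G_2)$ is that \eqref{mla2} holds only for $|t|\leq 1$, so the large-values regime genuinely depends on \eqref{2eq60}, and the Chebyshev bound must also be made uniform in $x$; this should follow from an $x$-uniform lower bound on $\widehat{G}_x(1)$ provided by $(g_1)$--$(g_5)$, or else be bypassed by replacing the threshold $1$ with any fixed positive constant.
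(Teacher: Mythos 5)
Your proposal is correct and follows essentially the same strategy as the paper: reduce to showing $\rho_{\widehat{A}_x}(u_n)\to 0$ (valid because \eqref{cA} gives a $\Delta_2$-type condition, so modular and norm convergence agree), then combine the modular convergence $\rho_{\widehat{G}_x}(u_n)\to 0$ from Theorem~\ref{comp1} with the uniform bound $\rho_{\widehat{G}_x^*}(u_n)\leq M$ from Theorem~\ref{thm1}, splitting the domain by thresholds determined from $\widehat{A}_x\prec\prec\widehat{G}_x^*$ and from $(G_1)$ or $(G_2)$. The only notable stylistic divergence is in case $(G_1)$: you package the truncated estimate as a pointwise inequality $\widehat{A}_x(t)\leq\varepsilon\,\widehat{G}_x^*(t)+C_\varepsilon\,\widehat{G}_x(t)$ valid for all $t\geq0$ and then integrate, whereas the paper applies H\"older's inequality with a fractional exponent $p\in(0,1)$ to $\widehat{A}_x/\widehat{G}_x^{\,p}\cdot\widehat{G}_x^{\,p}$ on $\{|u_n|<T\}$ and then shows $(\widehat{A}_x/\widehat{G}_x^{\,p})^{1/(1-p)}\leq C\widehat{G}_x$ there; both routes reduce to the same uniform bound $\widehat{A}_x\leq C\widehat{G}_x$ on the bounded range and your version is arguably more transparent. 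In case $(G_2)$ you would still need to split $\{|v_n|>1\}$ further into $\{1<|v_n|<T\}$ and $\{|v_n|\geq T\}$ to make the ``uniform integrability'' step precise, which is exactly what the paper does; the uniformity in $x$ you flag as a concern is indeed supplied by $(\mathcal{B}_f)$ together with Lemma~\ref{lem1}(2), as the paper's estimates \eqref{2eq56}--\eqref{2eq57} show.
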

Next, we extend the well-known Lion's lemma to the frame of the new
fractional Musielak-Sobolev spaces.
\begin{thm}[Lions' Lemma type result]\label{lions}
    Suppose that $(g_1)-(g_5)$ and \eqref{bf} hold. Let $\widehat{A}_x$ be a generalized N-function satisfying \eqref{bf}, \eqref{cA}, \eqref{2eq60},
  and \eqref{mla1b}.
   Let $\lbrace u_{n}\rbrace_{n\in \mathbb{N}}$ be a bounded sequence in $W^{s,G_{x,y}}(\mathbb{R}^{d})$ in such way that $u_n \rightharpoonup 0$ in $\mathbb{E}$ and
    \begin{equation}\label{lionss}
    \lim_{n\rightarrow+\infty}\left[\sup_{y\in\mathbb{R}^{d}}\int_{B_{r}(y)}\widehat{G}_x(u_{n})\,\mathrm{d}x \right]=0,\ \text{for some}\ r>0.
    \end{equation}
 Then, $u_{n}\rightarrow0$ in $L^{\widehat{A}_x}(\mathbb{R}^{d})$.
\end{thm}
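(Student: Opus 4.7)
The plan is to turn the Lions-type hypothesis together with the two controlled modulars into a pointwise comparison between $\widehat{A}_x$, $\widehat{G}_x$ and $\widehat{G}_x^{*}$, and then integrate.

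First I would prove a pointwise interpolation: for every $\epsilon>0$ there exists $C_\epsilon>0$ such that, uniformly in $x\in\mathbb{R}^d$ and $t\ge 0$,
\begin{equation*}
\widehat{A}_x(t) \;\le\; C_\epsilon\,\widehat{G}_x(t) + \epsilon\,\widehat{G}_x^{*}(t).
\end{equation*}
To establish this, choose $\delta_\epsilon>0$ so small that \eqref{mla1b} gives $\widehat{A}_x(t)\le\epsilon\widehat{G}_x(t)$ for $t\le\delta_\epsilon$, and $M_\epsilon>0$ so large that \eqref{2eq60} gives $\widehat{A}_x(t)\le\epsilon\widehat{G}_x^{*}(t)$ for $t\ge M_\epsilon$. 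On the intermediate range $\delta_\epsilon\le t\le M_\epsilon$ the polynomial-type bounds in \eqref{cA} together with the normalization \eqref{bf} provide uniform upper bounds on $\widehat{A}_x(M_\epsilon)$ and uniform lower bounds on $\widehat{G}_x(\delta_\epsilon)$, so that $\widehat{A}_x(t)\le \bigl(\widehat{A}_x(M_\epsilon)/\widehat{G}_x(\delta_\epsilon)\bigr)\widehat{G}_x(t)\le C_\epsilon\widehat{G}_x(t)$.

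Next I would produce the two modular quantities on the right-hand side. Since $(u_n)$ is bounded in $W^{s,G_{x,y}}(\mathbb{R}^d)$, the continuous embedding of Theorem \ref{thm1} yields $\int_{\mathbb{R}^d}\widehat{G}_x^{*}(u_n)\,\mathrm{d}x\le C$ for all $n$. On the other hand, the hypothesis $u_n\rightharpoonup 0$ in $\mathbb{E}$ makes $(u_n)$ bounded in $\mathbb{E}$ by Banach--Steinhaus, and the compact embedding $\mathbb{E}\hookrightarrow L^{\widehat{G}_x}(\mathbb{R}^d)$ of Theorem \ref{comp1} upgrades this weak convergence to strong convergence in $L^{\widehat{G}_x}(\mathbb{R}^d)$, which by the $\Delta_2$ condition (encoded in $(g_1)$--$(g_5)$) is equivalent to $\int_{\mathbb{R}^d}\widehat{G}_x(u_n)\,\mathrm{d}x\to 0$; in particular the hypothesis \eqref{lionss} is automatically fulfilled. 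Alternatively, one can invoke \eqref{lionss} directly by covering $\mathbb{R}^d$ by balls of radius $r$ with uniformly bounded multiplicity and applying the pointwise inequality ball-by-ball in the spirit of the classical Lions argument, but the route through Theorem \ref{comp1} is cleaner here.

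Integrating the pointwise interpolation over $\mathbb{R}^d$ gives
\begin{equation*}
\int_{\mathbb{R}^d}\widehat{A}_x(u_n)\,\mathrm{d}x \;\le\; C_\epsilon \int_{\mathbb{R}^d}\widehat{G}_x(u_n)\,\mathrm{d}x + \epsilon \int_{\mathbb{R}^d}\widehat{G}_x^{*}(u_n)\,\mathrm{d}x,
\end{equation*}
so letting $n\to\infty$ kills the first term and leaves $\limsup_n\int\widehat{A}_x(u_n)\,\mathrm{d}x\le \epsilon\,C$. Sending $\epsilon\to 0$ then produces the modular convergence $\int_{\mathbb{R}^d}\widehat{A}_x(u_n)\,\mathrm{d}x\to 0$, and the $\Delta_2$ bound in \eqref{cA} upgrades this to norm convergence $u_n\to 0$ in $L^{\widehat{A}_x}(\mathbb{R}^d)$. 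The main difficulty I anticipate is the uniformity in $x$ of the pointwise interpolation, and in particular the uniform control of the ratio $\widehat{A}_x(M_\epsilon)/\widehat{G}_x(\delta_\epsilon)$: conditions \eqref{mla1b}, \eqref{2eq60}, \eqref{cA} and \eqref{bf} are tailored precisely for this, but assembling them into a single clean bound, without leaking $x$-dependence into the constant $C_\epsilon$, is the technical heart of the argument.
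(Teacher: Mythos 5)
There is a genuine gap. Your primary route appeals to the compact embedding $\mathbb{E}\hookrightarrow L^{\widehat{G}_x}(\mathbb{R}^d)$ of Theorem \ref{comp1}, but that theorem requires the potential assumptions $(V_1)$--$(V_2)$, which are \emph{not} among the hypotheses of Theorem \ref{lions}. Indeed, if you look at how the paper uses this lemma (the Strauss embedding, Theorem \ref{thms}), there is no confining potential at all, so the compactness you invoke is simply not available. Moreover, if $(V_1)$--$(V_2)$ \emph{were} assumed, your observation that $u_n\to 0$ strongly in $L^{\widehat{G}_x}(\mathbb{R}^d)$ would make hypothesis \eqref{lionss} redundant and the whole lemma vacuous --- the conclusion would follow from Theorem \ref{compact} directly. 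The fact that your argument never actually uses \eqref{lionss} is the tell-tale sign that something is off: a Lions-type lemma is precisely the device that lets one trade global compactness for the local non-vanishing condition, so \eqref{lionss} must enter in an essential way.

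Your ``alternative'' ball-covering route would not rescue the argument as written, because the key inequality you use is pointwise and \emph{additive}: $\widehat{A}_x(t)\le C_\epsilon\widehat{G}_x(t)+\epsilon\widehat{G}_x^*(t)$. Applying it on each ball and summing over a cover of bounded multiplicity gives $\int_{\mathbb{R}^d}\widehat{A}_x(u_n)\le C_\epsilon\int_{\mathbb{R}^d}\widehat{G}_x(u_n)+\epsilon\,C$, and the first integral is merely bounded (by $\theta$ from Theorem \ref{thm1}), not vanishing. The smallness of $\sup_y\int_{B_r(y)}\widehat{G}_x(u_n)$ does not control the global modular $\int_{\mathbb{R}^d}\widehat{G}_x(u_n)$, so the $C_\epsilon$-term does not go to zero. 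What one needs, and what the paper actually does, is a mechanism that converts the ball-local vanishing into decay of the intermediate contribution: the paper decomposes $\int\widehat{A}_x(u_n)$ over the level sets $\{|u_n|\le\delta\}$, $\{\delta<|u_n|<T\}$, $\{|u_n|\ge T\}$, handles the first and third by \eqref{mla1b} and $\widehat{A}_x\prec\prec\widehat{G}_x^*$ together with the global bounds $\theta,\kappa$, and then shows, by a contradiction argument with \eqref{lionss}, that $|\{\delta<|u_n|<T\}|\to 0$. That measure-theoretic step on the intermediate level set --- not a pointwise interpolation --- is where \eqref{lionss} is really used, and it is entirely absent from your proposal.
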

Denote by
$$W^{s,G_{x,y}}_{rad}(\mathbb{R}^d):=\left\lbrace u\in W^{s,G_{x,y}}(\mathbb{R}^d):\ u\ \text{is radially symmetric}\right\rbrace.$$
By $u$ being radially symmetric, we mean a function $u:\mathbb{R}^d\longrightarrow \mathbb{R}$ satisfying $u(x) = u(y)$ for all $\vert x\vert=\vert y\vert$, $x,y\in \mathbb{R}^d.$\\
Now, using the above new Lion's lemma, we are ready to  give the following variant of Strauss theorem.
\begin{thm}[Strauss radial embedding]\label{thms}
  Let $G_{x,y}$ be a generalized N-function and $s\in (0,1)$. Under the assumptions $(g_1)-(g_5)$ and \eqref{bf}.  Let $\widehat{A}_x$ be a generalized  N-function verifying \eqref{bf}, \eqref{cA}, \eqref{2eq60},
    and \eqref{mla1b}. Then, we have the compact embedding $$W^{s,G_{x,y}}_{rad}(\mathbb{R}^d)\hookrightarrow L^{\widehat{A}_x}(\mathbb{R}^d).$$
\end{thm}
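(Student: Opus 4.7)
I would follow the classical Strauss strategy, using Theorem \ref{lions} as the analytic engine and exploiting radial symmetry to verify its non-concentration hypothesis \eqref{lionss}. Let $\{u_n\}\subset W^{s,G_{x,y}}_{rad}(\mathbb{R}^d)$ be bounded; along a subsequence $u_n\rightharpoonup u$ in $W^{s,G_{x,y}}(\mathbb{R}^d)$ with $u$ radial (the radial subspace being weakly closed), and replacing $u_n$ by $u_n-u$ I reduce to $u=0$. It then suffices to verify that the sup in \eqref{lionss} vanishes, which I handle by splitting into a far-field ($|y|\ge R$) and near-field ($|y|\le R$) estimate.

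For the far-field I use the standard disjoint-balls-on-a-sphere trick: given $r>0$ and $|y|=R$ large, one can place $N_R\sim c_d(R/r)^{d-1}$ points $z_i$ on the sphere $\{|x|=R\}$ so that the balls $B_{r/2}(z_i)$ are pairwise disjoint. Radiality of $u_n$ makes its restrictions to these balls agree up to rotation; applying the uniform Musielak bounds in $(g_5)$ to compare $\widehat{G}_x$-values at rotated points then gives
\[
\int_{\mathbb{R}^d}\widehat{G}_x(u_n)\,\mathrm{d}x \;\ge\; \sum_{i=1}^{N_R}\int_{B_{r/2}(z_i)}\widehat{G}_x(u_n)\,\mathrm{d}x \;\gtrsim\; N_R\int_{B_r(y)}\widehat{G}_x(u_n)\,\mathrm{d}x.
\]
Since the total modular is uniformly bounded (as $\{u_n\}$ is bounded in $W^{s,G_{x,y}}$), this yields $\sup_{|y|\ge R}\int_{B_r(y)}\widehat{G}_x(u_n)\,\mathrm{d}x \le CR^{-(d-1)}$, which tends to zero as $R\to\infty$. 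For the near-field $|y|\le R$, $B_r(y)\subset B_{R+r}(0)$, and the bounded-domain compact embedding $W^{s,G_{x,y}}(B_{R+r}(0))\hookrightarrow L^{\widehat{G}_x}(B_{R+r}(0))$ from \cite{M9} yields $u_n\to 0$ in $L^{\widehat{G}_x}(B_{R+r}(0))$, whence $\sup_{|y|\le R}\int_{B_r(y)}\widehat{G}_x(u_n)\,\mathrm{d}x\to 0$. Choosing $R$ so the far-field term is below $\varepsilon/2$ and then $n$ large so the near-field sup is below $\varepsilon/2$ verifies \eqref{lionss}, and Theorem \ref{lions} then delivers $u_n\to 0$ in $L^{\widehat{A}_x}(\mathbb{R}^d)$.

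The main obstacle is the far-field estimate: the $x$-dependence of $\widehat{G}_x$ breaks the exact rotational invariance one enjoys in the isotropic Orlicz or constant-exponent settings, so converting the bounded total modular into a pointwise-in-$y$ decay rate for local modulars requires careful use of $(g_5)$ to compare $\widehat{G}_x$-values at rotated points on a sphere. A secondary technicality is checking the hypothesis $u_n\rightharpoonup 0$ in $\mathbb{E}$ needed by Theorem \ref{lions}: this can be sidestepped, if necessary, by repeating the interpolation argument inside the proof of Theorem \ref{lions} directly, using the continuous embedding Theorem \ref{thm1} on each ball to turn the non-concentration estimate into strong convergence in $L^{\widehat{A}_x}(\mathbb{R}^d)$ via a finite-overlap covering argument.
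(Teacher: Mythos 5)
Your argument follows the same strategy as the paper: pack pairwise disjoint balls of radius $r$ on the sphere of radius $|y|$ to force $\sup_{y}\int_{B_r(y)}\widehat{G}_x(u_n)\,dx$ to vanish as $|y|\to\infty$, use the bounded-domain compact embedding of \cite{M9} for the near-field, and then invoke Theorem \ref{lions}. The explicit rate $N_R\sim(R/r)^{d-1}$ you give is only a cosmetic refinement of the paper's qualitative $\gamma(y)\to\infty$.

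However, the obstacle you yourself flag as the ``main obstacle'' is not resolved by what you propose, and it is a genuine gap. You appeal to $(g_5)$ to compare $\widehat{G}_x$-values at rotated points, but $(g_5)$ is an integrability condition on $\widehat{G}_x^{-1}$ that controls the Sobolev conjugate $\widehat{G}_x^*$; it contains no information relating $\widehat{G}_x(t)$ to $\widehat{G}_{Tx}(t)$ for a rotation $T$. Combining \eqref{bf} with Lemma \ref{lem1}$(2)$ only yields $\widehat{G}_x(t)/\widehat{G}_{x'}(t)\le (C_2/C_1)\max\{t^{g^+-g^-},t^{g^--g^+}\}$, which blows up as $t\to 0$ or $t\to\infty$, so no uniform two-sided comparison follows from the stated hypotheses. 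What the disjoint-ball estimate actually needs is that $\widehat{G}_x$ depends on $x$ only through $|x|$ (equivalently $\widehat{G}_{Tx}=\widehat{G}_x$ for every rotation $T$), so that the substitution $x\mapsto Tx$ genuinely preserves the modular over a ball. It may be instructive that the paper's own equation \eqref{2eq64} silently relies on exactly this: it asserts equality of the modulars over $B_r(y_1)$ and $B_r(y_2)$ when $|y_1|=|y_2|$, which, after changing variables via the rotation taking $y_1$ to $y_2$, requires precisely that rotational invariance in $x$ and is not implied by $(g_1)$--$(g_5)$ and \eqref{bf}. So you identified a real issue; your proposed repair via $(g_5)$ does not close it, and the honest remedy is to add an explicit radial-symmetry-in-$x$ hypothesis on $G_{x,y}$ (or restrict to the autonomous Orlicz case).
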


As an application of the above abstract results, we consider the following nonlocal problem:
\begin{equation}\label{PV}
    (-\Delta)_{g_{x,y}}^s u+V(x)g(x,x,u)=b(x)\vert u\vert^{p(x)-2}u,\ \text{for all}\ x\in \mathbb{R}^d,\tag{$\mathcal{P}$}
\end{equation}
where  $(-\Delta)_{g_{x,y}}^s$ is the fractional Museilak $g_{x,y}$-Laplacian, $p,\delta\in C\left(\mathbb{R}^d,(1,+\infty)\right)\cap L^{\infty}(\mathbb{R}^d)$  such that $$1<p^-\leq p^+<g^-\leq g^+\leq \delta^- p^-\leq \delta(x) p(x)\leq \delta^+ p^+\leq g_*^-,\ \text{for all}\ x\in \mathbb{R}^d.$$
We recall that $\displaystyle{p^-:=\inf_{x\in \mathbb{R}^d}p(x)}$and $ \displaystyle{p^+:=\sup_{x\in \mathbb{R}^d}p(x)}$.\\
For what concerns the function $b$, we assume the following assumption:
\begin{equation}\label{B}
  b\in L^{\delta^{'}(x)}(\mathbb{R}^d),\ \text{where}\ \delta^{'}(x):=\frac{\delta(x)}{\delta(x)-1},\ \forall\ x\in \mathbb{R}^d.\tag{B}
\end{equation}

 \begin{thm}\label{thmv}
   Under the assumptions \eqref{B}, $(g_1)-(g_5)$, \eqref{bf}, and $(V_1)-(V_2)$, problem \eqref{PV}  has a nontrivial weak solution $u\in \mathbb{E}$.
 \end{thm}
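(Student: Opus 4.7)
The plan is to apply the direct method of the calculus of variations to the energy functional naturally attached to \eqref{PV}. Define
$$
J(u):=\iint_{\mathbb{R}^{2d}}\frac{G_{x,y}\!\left(\frac{|u(x)-u(y)|}{|x-y|^{s}}\right)}{|x-y|^{d}}\,dx\,dy+\int_{\mathbb{R}^{d}}V(x)\widehat{G}_{x}(u)\,dx-\int_{\mathbb{R}^{d}}b(x)\frac{|u|^{p(x)}}{p(x)}\,dx
$$
on $\mathbb{E}$. A first (technical) step is to verify that $J\in C^{1}(\mathbb{E},\mathbb{R})$ and that its critical points are precisely the weak solutions of \eqref{PV}; this proceeds along standard lines for modular functionals in the Musielak framework, once one has the Gateaux differentiability of each summand.

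Second, I would establish the coercivity of $J$. The structural bounds coming from $(g_{1})$--$(g_{5})$ make the first two summands of $J$ comparable to $\|u\|_{\mathbb{E}}^{g^{-}}$ for $\|u\|_{\mathbb{E}}$ large (via the standard modular-norm inequalities for N-functions with $\Delta_{2}$ and $\nabla_{2}$). On the other hand, the continuous embedding $\mathbb{E}\hookrightarrow L^{\delta(x)p(x)}(\mathbb{R}^{d})$, which is a consequence of Theorem \ref{thm1} since $\delta(x)p(x)\le g_{*}^{-}$, combined with the variable-exponent H\"older inequality and \eqref{B}, yields an estimate of the form
$$
\int_{\mathbb{R}^{d}}|b(x)||u|^{p(x)}\,dx\;\le\;C\,\|b\|_{L^{\delta'(x)}}\,\bigl(\|u\|_{\mathbb{E}}^{p^{+}}+\|u\|_{\mathbb{E}}^{p^{-}}\bigr).
$$
Since $p^{+}<g^{-}$ by hypothesis, the first two terms dominate and $J(u)\to+\infty$ as $\|u\|_{\mathbb{E}}\to\infty$.

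Third, I would show that $J$ is weakly lower semicontinuous on $\mathbb{E}$. The Gagliardo-type and potential terms are convex and continuous, hence weakly l.s.c. For the last summand I invoke Theorem \ref{compact} with the choice $\widehat{A}_{x}(t):=|t|^{\delta(x)p(x)}$: the chain $g^{+}\le\delta(x)p(x)\le g_{*}^{-}$ provides $\widehat{A}_{x}\prec\prec\widehat{G}_{x}^{*}$ together with condition \eqref{mla1}, so the embedding $\mathbb{E}\hookrightarrow L^{\delta(x)p(x)}(\mathbb{R}^{d})$ is compact. Consequently, for any $u_{n}\rightharpoonup u$ in $\mathbb{E}$, one has strong convergence in $L^{\delta(x)p(x)}(\mathbb{R}^{d})$ and, after extraction, pointwise a.e.; combining this with the variable-exponent H\"older inequality and \eqref{B} yields $\int b|u_{n}|^{p(x)}/p(x)\,dx\to\int b|u|^{p(x)}/p(x)\,dx$, completing weak l.s.c.

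Finally, to exclude the trivial minimizer I would pick $\varphi\in C_{c}^{\infty}(\mathbb{R}^{d})$ with $\int b(x)|\varphi|^{p(x)}/p(x)\,dx>0$ (relying on the positive part of $b$ being nontrivial on a set of positive measure). The bounds of $(g_{1})$--$(g_{5})$ give, for $t\in(0,1)$, an estimate $J(t\varphi)\le Ct^{g^{-}}-ct^{p^{+}}$, which is strictly negative for $t$ small since $p^{+}<g^{-}$. Coercivity together with weak l.s.c.\ then yield $u^{*}\in\mathbb{E}$ realizing $\inf_{\mathbb{E}}J<0=J(0)$; this $u^{*}$ is necessarily nontrivial and is the desired weak solution of \eqref{PV}. \emph{The main obstacle} is the third step: passing from strong convergence in the variable-exponent Lebesgue space $L^{\delta(x)p(x)}$ to convergence of the weighted modular integral $\int b|u_{n}|^{p(x)}/p(x)\,dx$, which requires carefully chaining the Musielak-Sobolev embedding with variable-exponent H\"older inequalities and verifying a uniform-integrability-type condition; a secondary, but non-trivial, difficulty is the $C^{1}$-regularity of $J$ on $\mathbb{E}$, which involves differentiating the double-integral modular term inherent to the nonlocal Musielak setting.
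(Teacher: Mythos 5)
Your proposal is correct and reaches the same conclusion, but it follows a genuinely different route from the paper's. The paper (Section 7) does \emph{not} argue through weak lower semicontinuity of the energy functional; instead, after the same coercivity step (Lemma \ref{lemv1}), it establishes a Palais--Smale-type compactness property (Lemma \ref{lemps}), which rests on the $(S_+)$-property of $J'_{s,G_{x,y}}$ (Proposition \ref{s+}) together with Lemma \ref{lem55} and the compact embedding of Theorem \ref{compact}. The existence of a global minimizer is then obtained from coercivity $+$ $C^1$-regularity $+$ Palais--Smale, i.e.\ implicitly via Ekeland's variational principle, rather than via the direct method. Your approach---convexity of the two modular terms giving weak l.s.c.\ and weak continuity of the $b$-term via the compact embedding into $L^{\delta(\cdot)p(\cdot)}(\mathbb{R}^d)$---is more elementary, avoiding the $(S_+)$-machinery entirely; the paper's route is heavier but yields the Palais--Smale condition as a by-product, which is a stronger structural statement useful for minimax arguments beyond pure minimization. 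The nontriviality step (showing $\inf I<0$ via a scaling $I(tv)$ with $p^+<g^-$) is essentially identical in both; note that you correctly make explicit the implicit requirement that $b$ be positive on a set of positive measure so that a test function $v$ with $\int b|v|^{p(x)}\,dx>0$ exists, which the paper uses without stating. One caveat applying equally to both proofs: verifying $\widehat{A}_x=\frac{1}{\delta(x)p(x)}|t|^{\delta(x)p(x)}\prec\prec\widehat{G}_x^*$ uniformly in $x$ really uses $\delta^+p^+<g_*^-$ strictly; at equality the required decay fails, and this point deserves the same care in your write-up as it does in the paper's.
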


 The paper is organized as follows. In Section 2, we give some definitions and fundamental properties of the generalized N-functions,
  Musielak and fractional Musielak-Sobolev spaces. In Section 3, we prove Theorem \ref{thm1}. In Section 4, we present our compact embedding result
  of the weight fractional Musielak-Sobolev space. In Sections 5 and 6, we establish the Lions-type lemma and the Strauss compact embedding of the fractional
   Musielak-Sobolev space. In Section 7, we prove the existence of a nontrivial weak solution to the problem \eqref{PV}. Finally, in the last Section,
   we give some concluding remarks, perspectives, and open problems.

\section{Preliminaries}
In order to construct a suitable setting for our main results, we
consider the following definitions and assumptions: We suppose that
$d\geq 2$, $\Omega$
 is an open subset in $\mathbb{R}^d$ and $G:\Omega\times\Omega\times\mathbb{R}\longrightarrow \mathbb{R}$ is a Carath\'eodory
function defined by
$$G_{x,y}(t):=G(x,y,t):=\int_0^{\vert t\vert}g_{x,y}(\tau)d\tau:=\int_0^{\vert t\vert}g(x,y,s)(\tau)d\tau,$$
where
$$g(x,y,t):=\left\lbrace\begin{array}{ll}
a(x,y,t)t & \text{if}\ t\neq 0 \\
\ & \ \\
  0   & \text{if}\ t= 0,
\end{array}
\right.$$
with $a:\Omega\times\Omega\times(0,+\infty)\longrightarrow \mathbb{R}_+$  is a function satisfying the following assumptions:
\begin{enumerate}
    \item[$(g_1)$] $\displaystyle{\lim\limits_{t\rightarrow 0}a(x,y,t)t=0}$, and $\displaystyle{\lim\limits_{t\rightarrow +\infty}a(x,y,t)t=+\infty}$, for a.a. $(x,y)\in \Omega\times\Omega$;
    \item[$(g_2)$] $\displaystyle{t\mapsto a(x,y,t)}$ is continuous on $(0,+\infty)$, for all $(x,y)\in \Omega\times\Omega$;
    \item[$(g_3)$] $\displaystyle{t\mapsto a(x,y,t)t}$ is increasing on $(0,+\infty)$, for all $(x,y)\in \Omega\times\Omega$;
    \item[$(g_4)$]there exist $g^-,g^+\in (1,+\infty)$ such that
\begin{equation*}\label{2eqg}
 1<g^-\leq \frac{a(x,y,t)t^2}{G(x,y,t)}\leq g^+<g^-_*:=\frac{dg^-}{d-sg^-},\ \text{for all}\ (x,y)\in \Omega\times\Omega\ \text{and all}\ t>0
\end{equation*}
where $s\in(0,1)$.
\end{enumerate}
We also consider the function $\widehat{G}:\Omega\times\mathbb{R}\longrightarrow \mathbb{R}$ given by
\begin{equation}\label{2eq9}
\widehat{G}_x(t):=\widehat{G}(x,t)=\int_0^{\vert t\vert}\widehat{g}(x,\tau)d\tau,
\end{equation}
where $\widehat{g}(x,t):=\widehat{a}(x,t)t=a(x,x,t)t$, for all $(x,t)\in \Omega\times(0,+\infty).$
\subsection{Generalized N-functions}
In this subsection, we give some definitions and properties for the
generalized N-functions.
\begin{dfn} Let $\Omega$ be an open subset of $\mathbb{R}^d$. A function $G:\Omega\times \Omega\times\mathbb{R}+\longrightarrow \mathbb{R}$ is called a generalized N-function if it satisfies the following conditions:
    \begin{enumerate}
        \item[$(1)$] $G_{x,y}(t):=G(x,y,t)$ is even, continuous, increasing and convex in $t$, and for each $t\in \mathbb{R}, \ \ G(x,y,t) $ is measurable in $(x,y)$;
        \item[$(2)$]$\displaystyle{\lim\limits_{t\rightarrow 0}\frac{G_{x,y}(t)}{t}=0}$, for a.a $(x,y)\in \Omega\times\Omega;$
        \item[$(3)$]$\displaystyle{\lim\limits_{t\rightarrow \infty}\frac{G_{x,y}(t)}{t}=\infty}$, for a.a $(x,y)\in \Omega\times\Omega;$
        \item[$(4)$] $\displaystyle{G_{x,y}(t)>0}$, for all $t>0$ and $(x,y)\in \Omega\times\Omega$.
    \end{enumerate}
\end{dfn}
\begin{dfn}
We say that a generalized N-function $G_{x,y}$ satisfies the $\Delta_2$-condition if there exists $K > 0$ such that
$$G_{x,y}(2t)\leq KG_{x,y}(t),\ \ \text{for all}\ (x,y)\in \Omega\times\Omega\ \text{and all}\ t>0.$$
\end{dfn}
\begin{dfn}
For any generalized N-function $G_{x,y}$, the function
$\widetilde{G}_{x,y}:\Omega\times\Omega\times\mathbb{R}\longrightarrow
\mathbb{R}_+$ defined by
\begin{equation}\label{2eq50}
\widetilde{G}_{x,y}(t)=\widetilde{G}(x,y,t):=\sup_{\tau\geq 0}\left( t\tau-G_{x,y}(\tau)\right),\ \ \text{for all}\ (x,y)\in \Omega\times\Omega\ \text{and all}\ t>0
\end{equation}
 is called the complementary function of $G_{x,y}$.
\end{dfn}

The assumptions $(g_1)-(g_4)$ ensure that $G_{x,y}$ and its
complementary function $\widetilde{G}_{x,y}$ are  generalized
N-functions (see \cite{M32}).
\begin{rem}[see \cite{M30}]\label{rem1}
Assumption $(g_4)$, gives that
 $$g^-\leq \frac{\widehat{g}(x,t)t}{\widehat{G}(x,t)}\leq g^+\ \text{and}\ \widetilde{g}^-\leq \frac{\widetilde{\widehat{g}}(x,t)t}{\widetilde{\widehat{G}}(x,t)}\leq \widetilde{g}^+,\ \text{for all}\ x\in \Omega\ \text{and all}\ t>0 $$
 where $\displaystyle{\widetilde{g}^-=\frac{g^-}{g^--1}}$ and $\displaystyle{\widetilde{g}^+=\frac{g^+}{g^+-1}}$.
 Moreover, $G_{x,y}$, $\widehat{G}_x$  and  $\widetilde{\widehat{G}}_x$ satisfy the $\Delta_2$-condition.
 \end{rem}
In view of definition of the complementary function
$\widetilde{G}_{x,y}$, we have the following Young's type
inequality:
\begin{equation}\label{2eq95}
    \tau\sigma\leq G_{x,y}(\tau)+\widetilde{G}_{x,y}(\sigma),\ \text{for all}\ (x,y)\in \Omega\times\Omega\ \text{and all}\ \tau,\sigma\geq0.
\end{equation}
\subsection{Musielak-Orlicz spaces}
Let $G_{x,y}$ be a generalized N-function. In correspondence to $\widehat{G}_x=G_{x,x}$ and an open subset $\Omega$ of $\mathbb{R}^d$, the Musielak-Orlicz space is defined as follows
$$L^{\widehat{G}_x}(\Omega):=\left\lbrace u:\Omega\longrightarrow \mathbb{R}\ \text{measurable :}\ J_{\widehat{G}_x}(\lambda u)<+\infty,\ \ \text{for some}\ \lambda>0\right\rbrace,$$
where
\begin{equation}\label{2eq3}
  J_{\widehat{G}_x}(u):= \int_{\Omega} \widehat{G}_x(\vert u\vert)dx.
\end{equation}
The space $L^{\widehat{G}_x}(\Omega)$ is endowed with the
Luxemburg norm
\begin{equation}\label{2eq4}
  \Vert u\Vert_{L^{\widehat{G}_x}(\Omega)}:=\inf\left\lbrace \lambda:\ J_{\widehat{G}_{x}}\left(\frac{u}{\lambda}\right)\leq 1\right\rbrace.
\end{equation}
We would like to mention that our assumptions $(g_1)-(g_4)$ ensure
that $\displaystyle{\left(L^{\widehat{G}_x}(\Omega),\Vert
\cdot\Vert_{L^{\widehat{G}_x}(\Omega)}\right)}$ is a separable and
reflexive Banach space.\\
Now, we recall the following technical and important lemmas.
\begin{lemma}[see \cite{M1}]\label{lem1}
  Assume that the assumptions $(g_1)-(g_4)$ hold. Then, the function $\widehat{G}_x$  and  $\Tilde{\widehat{G}}_x$ satisfy the following properties:
  \begin{enumerate}
   \item[$(1)$] $\displaystyle{\min\left\lbrace \tau^{g^-},\tau^{g^+}\right\rbrace G_{x,y}(t)\leq G_{x,y}(\tau t)\leq \max\left\lbrace \tau^{g^-},\tau^{g^+}\right\rbrace G_{x,y}(t)}$, for all $x \in \Omega$ and $\tau,t>0$;
      \item[$(2)$] $\displaystyle{\min\left\lbrace \tau^{g^-},\tau^{g^+}\right\rbrace\widehat{G}_x(t)\leq \widehat{G}_x(\tau t)\leq \max\left\lbrace \tau^{g^-},\tau^{g^+}\right\rbrace \widehat{G}_x(t)}$, for all $x \in \Omega$ and $\tau,t>0$;
      \item[$(3)$] $\displaystyle{\min\left\lbrace \tau^{\Tilde{g}^-},\tau^{\Tilde{g}^+}\right\rbrace\Tilde{\widehat{G}}_x(t)\leq \Tilde{\widehat{G}}_x(\tau t)\leq \max\left\lbrace \tau^{\Tilde{g}^-},\tau^{\Tilde{g}^+}\right\rbrace \Tilde{\widehat{G}}_x(t)}$, for all $x \in \Omega$ and $\tau,t>0$;
      \item[$(4)$] $\displaystyle{\min\left\lbrace \Vert u\Vert_{L^{\widehat{G}_x}(\Omega)}^{g^-},\Vert u\Vert_{L^{\widehat{G}_x}(\Omega)}^{g^+}\right\rbrace
      \leq J_{\widehat{G}_x}(u)\leq \max\left\lbrace \Vert u\Vert_{L^{\widehat{G}_x}(\Omega)}^{g^-},\Vert u\Vert_{L^{\widehat{G}_x}(\Omega)}^{g^+}\right\rbrace }$, for all $u\in L^{\widehat{G}_x}(\Omega)$;
      \item[$(5)$] $\displaystyle{\min\left\lbrace \Vert u\Vert_{L^{\Tilde{\widehat{G}}_x}(\Omega)}^{g^-},\Vert u\Vert_{L^{\Tilde{\widehat{G}}_x}(\Omega)}^{g^+}
      \right\rbrace\leq J_{\Tilde{\widehat{G}}_x}(u)\leq \max\left\lbrace \Vert u\Vert_{L^{\Tilde{\widehat{G}}_x}(\Omega)}^{g^-},
      \Vert u\Vert_{L^{\Tilde{\widehat{G}}_x}(\Omega)}^{g^+}\right\rbrace} $, for all $u\in L^{\Tilde{\widehat{G}}_x}(\Omega)$, where
       $\displaystyle{J_{\tilde{\widehat{G}}_x}(u):= \int_{\Omega} \tilde{\widehat{G}}_x(\vert u\vert)dx}$.
  \end{enumerate}
   \end{lemma}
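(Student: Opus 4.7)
The plan is to derive all five items from assumption $(g_4)$ together with Remark \ref{rem1}. The key observation is that, since $G'_{x,y}(t)=g_{x,y}(t)$ for $t>0$, $(g_4)$ can be rewritten in the equivalent form
\[
g^-\leq \frac{t\,G'_{x,y}(t)}{G_{x,y}(t)}\leq g^+,
\]
that is, the logarithmic derivative $\mathrm{d}(\log G_{x,y}(t))/\mathrm{d}(\log t)$ lies in the fixed interval $[g^-,g^+]$ for all $t>0$ and all $(x,y)\in\Omega\times\Omega$.

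For item $(1)$, I would fix $t>0$ and $(x,y)\in\Omega\times\Omega$ and integrate this bound between $\log t$ and $\log(\tau t)$. When $\tau\geq 1$ one has $\log\tau\geq 0$, giving $g^-\log\tau\leq \log\!\bigl(G_{x,y}(\tau t)/G_{x,y}(t)\bigr)\leq g^+\log\tau$, i.e.\ $\tau^{g^-}G_{x,y}(t)\leq G_{x,y}(\tau t)\leq \tau^{g^+}G_{x,y}(t)$. When $0<\tau<1$ the sign of $\log\tau$ flips, giving $\tau^{g^+}G_{x,y}(t)\leq G_{x,y}(\tau t)\leq \tau^{g^-}G_{x,y}(t)$. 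Assembling both cases produces the claimed $\min/\max$ estimate. Item $(2)$ is then the immediate specialization $y=x$, and item $(3)$ follows from the same argument applied to $\widetilde{\widehat{G}}_x$, using the bounds $\widetilde g^-\leq t\,\widetilde{\widehat g}(x,t)/\widetilde{\widehat G}(x,t)\leq \widetilde g^+$ supplied by Remark \ref{rem1}.

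For item $(4)$, I would fix $u\in L^{\widehat{G}_x}(\Omega)\setminus\{0\}$ and set $\lambda:=\|u\|_{L^{\widehat{G}_x}(\Omega)}$. The $\Delta_2$-condition on $\widehat{G}_x$ recorded in Remark \ref{rem1}, together with dominated convergence, ensures that $\mu\mapsto J_{\widehat{G}_x}(u/\mu)$ is continuous and strictly decreasing on $(0,\infty)$, with limits $+\infty$ at $0^+$ and $0$ at $+\infty$; combined with the definition of the Luxemburg norm this yields the exact identity $J_{\widehat{G}_x}(u/\lambda)=1$. Applying the pointwise scaling estimate from $(2)$ with $\tau=\lambda$ and argument $|u(x)|/\lambda$,
\[
\min\{\lambda^{g^-},\lambda^{g^+}\}\,\widehat{G}_x\!\left(\frac{|u(x)|}{\lambda}\right)\leq \widehat{G}_x(|u(x)|)\leq \max\{\lambda^{g^-},\lambda^{g^+}\}\,\widehat{G}_x\!\left(\frac{|u(x)|}{\lambda}\right),
\]
and integrating over $\Omega$ while invoking $J_{\widehat{G}_x}(u/\lambda)=1$, produces the two-sided bound in $(4)$. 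Item $(5)$ is obtained by repeating this argument verbatim with the complementary function $\widetilde{\widehat{G}}_x$ and the exponents $\widetilde g^\pm$, which also verify the $\Delta_2$-condition by Remark \ref{rem1}.

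The main technical point is the identity $J_{\widehat{G}_x}(u/\|u\|)=1$: without the $\Delta_2$-condition one could only assert $J_{\widehat{G}_x}(u/\|u\|)\leq 1$, which would weaken the lower bound in $(4)$ and $(5)$. Once this identity is secured, the remaining work is careful bookkeeping of the two cases $\lambda\geq 1$ and $0<\lambda<1$ so that the $\min$'s and $\max$'s line up correctly on both sides.
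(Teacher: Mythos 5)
The paper gives no proof of this lemma at all: it is quoted from \cite{M1} (and items $(1)$--$(3)$ go back to the Fukagai--Ito--Narukawa-type argument of \cite{M30}), so there is nothing internal to compare against line by line. Your proof is the standard one and is essentially correct: the identity $t\,G'_{x,y}(t)/G_{x,y}(t)\in[g^-,g^+]$ integrated in $\log t$ gives $(1)$, specialization $y=x$ gives $(2)$, the bounds of Remark \ref{rem1} give $(3)$, and for $(4)$ the attainment $J_{\widehat{G}_x}(u/\|u\|_{L^{\widehat{G}_x}(\Omega)})=1$ (valid because $\Delta_2$ holds with a uniform constant, so $\mu\mapsto J_{\widehat{G}_x}(u/\mu)$ is finite, continuous and strictly decreasing with the stated limits) combined with the scaling inequality of $(2)$ yields the two-sided estimate. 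Note that elsewhere in this paper (Lemma \ref{rho}, and Remark \ref{norm}) the authors handle the analogous modular--norm comparison without the exact identity, using only $J(u/\|u\|)\le 1$ for the upper bound and the strict inequality $J(u/\sigma)>1$ for $\sigma<\|u\|$ followed by $\sigma\to\|u\|$ for the lower bound; that route needs no continuity/attainment discussion, so your $\Delta_2$-based identity is a convenience rather than a necessity. One point to flag: for item $(5)$ your argument produces the estimate with the exponents $\widetilde g^{\pm}$, not with $g^{\pm}$ as literally printed in the statement; the printed version cannot be correct in general (already for $\widehat G_x(t)=|t|^{p}$ with $p\neq 2$ one has $J_{\widetilde{\widehat G}_x}(u)=\|u\|_{L^{\widetilde{\widehat G}_x}(\Omega)}^{p'}$, which is not squeezed between $\|u\|^{p}$ and $\|u\|^{p}$), so the statement contains a typo and the version you prove, with $\widetilde g^{\pm}$, is the one that is true and the one consistent with item $(3)$.
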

  \begin{lemma}[see \cite{M32}]\label{lem3}
     Let $\widehat{G}_{x}$ be a generalized N-function. Then, we have
     $$ \Vert u\Vert_{L^{\widehat{G}_x}(\Omega)}\leq J_{\widehat{G}_x}(u)+1,\ \ \text{for all}\ u\in L^{\widehat{G}_x}(\Omega).$$
\end{lemma}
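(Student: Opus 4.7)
The plan is to exploit the Luxemburg-norm definition \eqref{2eq4} by exhibiting a single admissible value of the scaling parameter and then estimating the modular on the rescaled function via the convexity of $\widehat{G}_x$. Concretely, I would produce some $\lambda$ satisfying $J_{\widehat{G}_x}(u/\lambda)\le 1$ and at the same time $\lambda\le J_{\widehat{G}_x}(u)+1$; the obvious candidate is $\lambda := J_{\widehat{G}_x}(u)+1$.

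First I would dispose of the trivial case. If $J_{\widehat{G}_x}(u)=+\infty$, the right-hand side of the desired inequality is $+\infty$ and there is nothing to prove (the left-hand side is finite since $u\in L^{\widehat{G}_x}(\Omega)$). Hence I may assume $J_{\widehat{G}_x}(u)<+\infty$ and set $\lambda:=J_{\widehat{G}_x}(u)+1\ge 1$. Using that $t\mapsto\widehat{G}_x(t)$ is convex with $\widehat{G}_x(0)=0$, I obtain the standard one-sided homogeneity estimate
\[
\widehat{G}_x(tr)=\widehat{G}_x\bigl(tr+(1-t)\cdot 0\bigr)\le t\,\widehat{G}_x(r)\qquad\text{for all }t\in[0,1],\ r\ge 0.
\]
Applying this pointwise with $t=1/\lambda\in(0,1]$ and $r=|u(x)|$ and integrating over $\Omega$ yields
\[
J_{\widehat{G}_x}\!\left(\frac{u}{\lambda}\right)\;\le\;\frac{1}{\lambda}\,J_{\widehat{G}_x}(u)\;=\;\frac{J_{\widehat{G}_x}(u)}{J_{\widehat{G}_x}(u)+1}\;<\;1.
\]

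Consequently, $\lambda$ is an admissible value in the infimum defining $\|u\|_{L^{\widehat{G}_x}(\Omega)}$ in \eqref{2eq4}, so
\[
\|u\|_{L^{\widehat{G}_x}(\Omega)}\;\le\;\lambda\;=\;J_{\widehat{G}_x}(u)+1,
\]
which is the claimed bound. There is no real obstacle in this proof: the argument is the classical Orlicz-space trick and depends only on the convexity of $\widehat{G}_x$ in the last variable together with $\widehat{G}_x(0)=0$; the growth conditions $(g_1)$--$(g_4)$ are not needed (they would only be required for the sharper two-sided estimates of Lemma \ref{lem1}(4)). The only minor care point is the choice of $\lambda\ge 1$, which is what guarantees $1/\lambda\in(0,1]$ so that the convexity estimate can be applied with $0$ as the second barycentric point.
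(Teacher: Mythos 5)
Your proof is correct and is the standard Orlicz-space argument; the paper simply cites this lemma from the textbook reference \cite{M32} without reproducing a proof, and the argument you give (choose $\lambda=J_{\widehat{G}_x}(u)+1\ge 1$, use convexity with $\widehat{G}_x(0)=0$ to show $J_{\widehat{G}_x}(u/\lambda)\le J_{\widehat{G}_x}(u)/\lambda<1$, hence $\lambda$ is admissible in the Luxemburg infimum) is exactly the expected one. Your remark that $(g_1)$--$(g_4)$ are not needed and that only convexity together with $\widehat{G}_x(0)=0$ is used is accurate, as is the handling of the case $J_{\widehat{G}_x}(u)=+\infty$.
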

As a consequence of \eqref{2eq95}, we have the following lemma:
\begin{lemma}[H\"older's type
inequality]\label{lemhol}
  Let $\Omega$ be an open subset of $\mathbb{R}^d$. Let $\widehat{G}_x$ a generalized N-function and $\widetilde{\widehat{G}}_x$ its complementary function, then
  \begin{equation}\label{2eq96}
     \left\vert \int_{\Omega} uv dx \right\vert \leq 2 \Vert u\Vert_{L^{\widehat{G}_x}(\Omega)} \Vert v\Vert_{L^{\widetilde{\widehat{G}}_x}(\Omega)},\ \text{for all}\ u\in L^{\widehat{G}_x}(\Omega)\ \text{and}\ v\in L^{\widetilde{\widehat{G}}_x}(\Omega).
  \end{equation}
\end{lemma}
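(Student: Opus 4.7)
The plan is to derive the inequality in the classical Orlicz-space style: normalize $u$ and $v$ by their Luxemburg norms, apply the pointwise Young-type inequality \eqref{2eq95}, and integrate. The factor $2$ on the right-hand side is the familiar price of using the Luxemburg norm rather than the Orlicz (Amemiya) norm.

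First I would dispose of the trivial case: if $\Vert u\Vert_{L^{\widehat{G}_x}(\Omega)}=0$ or $\Vert v\Vert_{L^{\widetilde{\widehat{G}}_x}(\Omega)}=0$, then $u=0$ or $v=0$ a.e.\ on $\Omega$ and the inequality holds trivially. So assume both norms are strictly positive, and set $\alpha:=\Vert u\Vert_{L^{\widehat{G}_x}(\Omega)}$ and $\beta:=\Vert v\Vert_{L^{\widetilde{\widehat{G}}_x}(\Omega)}$. By Lemma \ref{lem1}(4), for $\lambda=\alpha$ one has $J_{\widehat{G}_x}(u/\alpha)\le 1$, and analogously $J_{\widetilde{\widehat{G}}_x}(v/\beta)\le 1$; this uses the $\Delta_2$ property recorded in Remark \ref{rem1} so that the infimum in the definition of the Luxemburg norm is attained (alternatively I would argue by picking $\alpha_n\downarrow\alpha$ and $\beta_n\downarrow\beta$ admissible in \eqref{2eq4} and passing to the limit by monotone convergence).

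Next, setting in \eqref{2eq95} the point $(x,y)\mapsto(x,x)$ so that $G_{x,x}=\widehat{G}_x$ and $\widetilde{G}_{x,x}=\widetilde{\widehat{G}}_x$, I would apply the Young-type inequality pointwise with $\tau=|u(x)|/\alpha$ and $\sigma=|v(x)|/\beta$, obtaining for a.e.\ $x\in\Omega$
\begin{equation*}
\frac{|u(x)|\,|v(x)|}{\alpha\,\beta}\;\le\;\widehat{G}_x\!\left(\frac{|u(x)|}{\alpha}\right)+\widetilde{\widehat{G}}_x\!\left(\frac{|v(x)|}{\beta}\right).
\end{equation*}
Integrating over $\Omega$ and invoking the two modular bounds from the previous paragraph yields
\begin{equation*}
\frac{1}{\alpha\beta}\int_{\Omega}|u(x)||v(x)|\,dx\;\le\;J_{\widehat{G}_x}\!\left(\frac{u}{\alpha}\right)+J_{\widetilde{\widehat{G}}_x}\!\left(\frac{v}{\beta}\right)\;\le\;1+1\;=\;2.
\end{equation*}
Multiplying through by $\alpha\beta$ and using $\left|\int_\Omega uv\,dx\right|\le \int_\Omega |u||v|\,dx$ delivers \eqref{2eq96}.

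The only genuinely delicate point is the measurability and integrability of the two compositions $x\mapsto\widehat{G}_x(|u(x)|/\alpha)$ and $x\mapsto\widetilde{\widehat{G}}_x(|v(x)|/\beta)$, so that the integrated Young inequality is meaningful; this follows from the Carathéodory character of $G$ (and hence of $\widehat{G}$ and $\widetilde{\widehat{G}}$) together with the membership $u\in L^{\widehat{G}_x}(\Omega)$, $v\in L^{\widetilde{\widehat{G}}_x}(\Omega)$, which by definition of these Musielak spaces makes the modulars $J_{\widehat{G}_x}(u/\alpha)$ and $J_{\widetilde{\widehat{G}}_x}(v/\beta)$ finite. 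Everything else is routine.
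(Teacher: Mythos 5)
Your proof is correct and is essentially the argument the paper intends: the lemma is stated there as a direct consequence of the Young-type inequality \eqref{2eq95}, applied pointwise at $y=x$ to the Luxemburg-normalized functions and then integrated, with the two modular bounds giving the factor $2$. One small remark: since the lemma's hypotheses do not include $(g_1)$--$(g_4)$, you should rely on your alternative justification of $J_{\widehat{G}_x}(u/\alpha)\le 1$ (approximating $\alpha$ from above and using Fatou, as in Remark \ref{norm}) rather than on Lemma \ref{lem1}(4) or the $\Delta_2$-condition, which are not available at this level of generality.
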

Proceeding as in \cite[Lemma 3.4]{M50}, we get the following result:
\begin{lemma}\label{lem55} Let $G_{x,y}$ be a generalized N-function satisfying the assumptions $(g_1)-(g_4)$. Then, we have
$$\left(g_{x,y}(\tau)-g_{x,y}(\sigma)\right)(\tau -\sigma)\geq 4 G_{x,y}\left(\frac{\tau-\sigma}{2}\right),\ \text{for all}\ \tau,\sigma \in \mathbb{R}\setminus\lbrace 0\rbrace \ \text{and}\ x,y\in \mathbb{R}^d.$$
\end{lemma}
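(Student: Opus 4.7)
The plan is to reduce the estimate, by symmetry and a Bregman-type identity, to a pointwise Clarkson midpoint inequality for the even convex function $G := G_{x,y}$, and then to verify the latter using the quantitative convexity afforded by $(g_4)$. First I would drop the $(x,y)$ indices — fixing $(x,y)\in\mathbb{R}^d\times\mathbb{R}^d$ throughout — and set $g := g_{x,y}$. Assumptions $(g_1)$--$(g_3)$ guarantee that $G$ is convex on $[0,\infty)$, even on $\mathbb{R}$, and that its derivative (extended oddly) $g$ is non-decreasing on $\mathbb{R}$. Both sides of the inequality are invariant under swapping $\tau$ and $\sigma$, so I may assume $\tau > \sigma$. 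Introducing the midpoint $m := (\tau+\sigma)/2$ and half-increment $h := (\tau-\sigma)/2 > 0$, the target reads
$$ 2h\,\bigl[g(m+h) - g(m-h)\bigr] \;\geq\; 4\, G(h). $$

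Step one is a Bregman-sum identity obtained by applying the fundamental theorem of calculus to $G' = g$:
$$ G(m+h) + G(m-h) - 2 G(m) \;=\; \int_0^h \bigl[g(m+u) - g(m-u)\bigr]\, du. $$
Monotonicity of $g$ bounds the integrand pointwise by its endpoint value $g(m+h)-g(m-h)$, producing
$$ (g(\tau) - g(\sigma))(\tau-\sigma) \;\geq\; 2\bigl[G(\tau) + G(\sigma) - 2 G(m)\bigr]. $$
Step two is the Clarkson-type midpoint lower bound $G(\tau) + G(\sigma) - 2 G(m) \geq 2 G(h)$, which when chained with step one delivers the factor of $4$. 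I would prove it by a case split on the ratio $|m|/h$: for $|m| \leq h$, the evenness of $G$ combined with the scaling $G(2h) \geq 2^{g^-} G(h)$ from Lemma~\ref{lem1}(2) does the job; for $|m| > h$, I would return to the integral representation above and lower-bound the integrand $g(m+u) - g(m-u)$ using the quantitative monotonicity encoded by $g(t)t \geq g^- G(t)$ in $(g_4)$.

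The main obstacle is step two. Pure convexity of $G$ gives only $G(\tau) + G(\sigma) - 2 G(m) \geq 0$, so the sharp exponent condition $g^- > 1$ from $(g_4)$ must be harnessed quantitatively, and uniformly in the spectator parameter $m$. This is exactly the content of \cite[Lemma 3.4]{M50}, whose argument I would adapt verbatim: since the variables $(x,y)$ enter only through the N-function value $G_{x,y}$ at a single point, the estimate is purely a real-variable Clarkson inequality, and the Musielak--Orlicz structure is inherited costlessly. Combining the two steps then yields $(g(\tau)-g(\sigma))(\tau-\sigma) \geq 4\, G\bigl((\tau-\sigma)/2\bigr)$, which is the claim.
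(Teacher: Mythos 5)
Your step one is clean and correct: the fundamental-theorem identity $G(m+h)+G(m-h)-2G(m)=\int_0^h\bigl[g(m+u)-g(m-u)\bigr]\,du$ together with the monotonicity of $g$ yields $(g(\tau)-g(\sigma))(\tau-\sigma)\geq 2\bigl[G(\tau)+G(\sigma)-2G(m)\bigr]$, and reducing to a Clarkson-type midpoint bound is the right skeleton. The genuine gap is in step two: the bound $G(\tau)+G(\sigma)-2G(m)\geq 2G(h)$ does \emph{not} follow from $g^->1$; it needs $g^-\geq 2$. Your own treatment of the case $|m|\leq h$ exposes this. At the boundary $m=h$ the left side is $G(2h)-2G(h)$, and the scaling $G(2h)\geq 2^{g^-}G(h)$ gives only $(2^{g^-}-2)G(h)$, which exceeds $2G(h)$ precisely when $2^{g^-}\geq 4$, i.e.\ $g^-\geq 2$. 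The proposed handling of $|m|>h$ is too vague to evaluate, but in any case cannot rescue the argument, because the lemma itself fails under $(g_1)$--$(g_4)$ alone.

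For a concrete counterexample, take $G_{x,y}(t)=\frac{1}{p}|t|^p$ with $p=\tfrac{3}{2}$, so $g^-=g^+=\tfrac{3}{2}$ and $(g_1)$--$(g_4)$, \eqref{bf} all hold; with $\tau=3$, $\sigma=1$ and $g(t)=|t|^{1/2}\mathrm{sgn}(t)$ one has $(g(\tau)-g(\sigma))(\tau-\sigma)=2(\sqrt{3}-1)\approx 1.46$, while $4\,G\bigl(\tfrac{\tau-\sigma}{2}\bigr)=4G(1)=\tfrac{8}{3}\approx 2.67$. More structurally, letting $\sigma\to\tau$ with $\tau>0$ fixed, the left side is of order $(\tau-\sigma)^2$ while the right side is of order $|\tau-\sigma|^{g^-}$, which dominates whenever $g^-<2$. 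The paper offers no proof beyond the pointer to \cite[Lemma 3.4]{M50}, which operates under a superquadraticity hypothesis ($g^-\geq 2$) that the present paper has not imported into $(g_4)$. Your Bregman-plus-Clarkson route would be correct and complete once $g^-\geq 2$ is added to the hypotheses, but as stated the claim cannot be established, and neither your attempt nor the paper's citation closes this gap.
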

\subsection{Fractional Musielak-Sobolev spaces}
Let $G_{x,y}$ be a generalizd N-function, $s\in(0,1)$ and $\Omega$ an open subset of $\mathbb{R}^d$. The fractional Musielak-Sobolev space is defined as follows

$$W^{s,G_{x,y}}(\Omega):=\left\lbrace u\in L^{\widehat{G}_x}(\Omega):\ J_{s,G_{x,y}}(\lambda u)\leq +\infty,\ \text{for some}\ \lambda>0\right\rbrace,$$
where
\begin{equation}\label{2eq1}
 J_{s,G_{x,y}}(u):=\int_{\Omega}\int_{\Omega}G_{x,y}\left(\frac{u(x)-u(y)}{\vert x-y\vert^s}\right)\frac{dxdy}{\vert x-y\vert^d}.
\end{equation}
The space $W^{s,G_{x,y}}(\Omega)$ is endowed with the norm
\begin{equation}\label{2eq2}
  \Vert u\Vert_{W^{s,G_{x,y}}(\Omega)}:=\Vert u\Vert_{L^{\widehat{G}_x}(\Omega)}+\left[u\right]_{s,G_{x,y}},\ \ \text{for all}\ u\in W^{s,G_{x,y}}(\Omega),
\end{equation}
with $\left[u\right]_{s,G_{x,y}}$is the so called $(s,G_{x,y})$-Gagliardo seminorm defined by
\begin{equation}\label{2eq5}
\left[u\right]_{s,G_{x,y}}:=\inf\left\lbrace \lambda:\ J_{s,G_{x,y}}\left(\frac{u}{\lambda}\right)\leq 1\right\rbrace.
\end{equation}
\begin{rem}\label{rem2}
    Since assumption $(g_4)$ implies that the functions $\widehat{G}_x$  and  $\widetilde{\widehat{G}}_x$ satisfy the $\Delta_2$-condition, the space
$W^{s,G_{x,y}}(\Omega)$ is a  reflexive and separable Banach space.
\end{rem}
Let $\widehat{G}_x$ be defined as in \eqref{2eq9}, the assumptions $(g_1)-(g_3)$, confirm that, for each $x\in \Omega$, $\widehat{G}_x:\mathbb{R}_+\longrightarrow \mathbb{R}_+$  is an increasing homeomorphism. Hence, the inverse function $\widehat{G}_x^{-1}$ of $\widehat{G}_x$  exists.\\
Throughout this paper, we  assume the following condition:
\begin{equation}\label{g5}
  \int_{0}^{1} \frac{\widehat{G}^{-1}_x(\tau)}{\tau^{\frac{d+s}{d}}} d\tau<+\infty\ \ \text{and}\ \ \int_{1}^{+\infty} \frac{\widehat{G}^{-1}_x(\tau)}{\tau^{\frac{d+s}{d}}} d\tau=+\infty,\ \ \text{for all}\ x\in \Omega.\tag{$g_5$}
\end{equation}
Now, we define the inverse of an important function which is the Musielak-Sobolev conjugate function of $\widehat{G}_x$, denoted by $\widehat{G}_x^*$, as follows:
\begin{equation}\label{2eq10}
\left(\widehat{G}_x^*\right)^{-1}(t):= \int_{0}^{t} \frac{\widehat{G}^{-1}_x(\tau)}{\tau^{\frac{d+s}{d}}} d\tau,\ \ \text{for all}\ x\in \Omega\ \text{and}\ t\geq 0.
\end{equation}
\begin{lemma}[see \cite{M1}]\label{lem2}
  Assume that  assumptions $(g_1)-(g_5)$ hold with $g^-,g^+\in (1,\frac{d}{s})$ and $s\in (0,1)$. Then, we have the following properties:
  \begin{enumerate}
   \item[$(1)$] $\displaystyle{\min\left\lbrace \left[u\right]_{s,G_{x,y}}^{g^-},\left[u\right]_{s,G_{x,y}}^{g^+}\right\rbrace \leq J_{s,G_{x,y}}(u)\leq \max\left\lbrace \left[u\right]_{s,G_{x,y}}^{g^-},\left[u\right]_{s,G_{x,y}}^{g^+}\right\rbrace}$, for all $u \in W^{s,G_{x,y}}(\Omega)$;
      \item[$(2)$] $\displaystyle{\min\left\lbrace \tau^{g^-_*},\tau^{g^+_*}\right\rbrace\widehat{G}_x^*(t)\leq \widehat{G}_x^*(\tau t)\leq \max\left\lbrace \tau^{g^-_*},\tau^{g^+_*}\right\rbrace \widehat{G}_x^*(t)}$, for all $x \in \Omega$ and $\tau,t>0$;
      \item[$(3)$] $\displaystyle{\min\left\lbrace \Vert u\Vert_{L^{\widehat{G}_x^*}(\Omega)}^{g^-_*},\Vert u\Vert_{L^{\widehat{G}^*_x}(\Omega)}^{g^+_*}
      \right\rbrace\leq J_{\widehat{G}^*_x}(u)\leq \max\left\lbrace \Vert u\Vert_{L^{\widehat{G}^*_x}(\Omega)}^{g^-_*},
      \Vert u\Vert_{L^{\widehat{G}^*_x}(\Omega)}^{g^+_*}\right\rbrace }$, for all $u\in L^{\widehat{G}^*_x}(\Omega)$.
  \end{enumerate}
  where $\displaystyle{g^-_*=\frac{dg^-}{d-sg^-}}$, $\displaystyle{g^+_*:=\frac{dg^+}{d-sg^+}}$, and $\displaystyle{J_{\widehat{G}^*_x}(u):= \int_{\Omega} \widehat{G}^*_x(\vert u\vert)dx}$.
\end{lemma}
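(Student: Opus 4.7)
The plan is to reduce all three properties to a single structural fact: the Musielak--Sobolev conjugate $\widehat{G}_x^*$ inherits from $\widehat{G}_x$ a two-sided logarithmic-derivative bound analogous to $(g_4)$, but with the exponents $g^\pm$ replaced by their Sobolev conjugates $g^\pm_*=dg^\pm/(d-sg^\pm)$. Once this differential bound is in hand, each of the three claims follows by the standard ``apply Lemma 2.4 fiberwise, then integrate'' scaling technique already visible in the statements of Lemmas 2.4 and 2.5.

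Parts (1) and (3) are almost identical in spirit. For (1), I would fix $\lambda>0$ and apply Lemma 2.4(1) pointwise in $(x,y)$ inside the integrand defining $J_{s,G_{x,y}}(\lambda u)$, which produces
\[
\min\{\lambda^{g^-},\lambda^{g^+}\}\,J_{s,G_{x,y}}(u)\ \leq\ J_{s,G_{x,y}}(\lambda u)\ \leq\ \max\{\lambda^{g^-},\lambda^{g^+}\}\,J_{s,G_{x,y}}(u).
\]
Because $G_{x,y}$ satisfies the $\Delta_2$-condition (Remark 2.3), the map $\lambda\mapsto J_{s,G_{x,y}}(u/\lambda)$ is continuous and strictly decreasing on $(0,\infty)$, so the infimum defining $[u]_{s,G_{x,y}}$ is attained with equality: $J_{s,G_{x,y}}(u/[u]_{s,G_{x,y}})=1$ for $u\ne 0$. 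Substituting $\lambda=[u]_{s,G_{x,y}}$ into the above display yields (1). Part (3) repeats this verbatim with $J_{\widehat{G}_x^*}$ in place of $J_{s,G_{x,y}}$ and the $\widehat{G}_x^*$ scaling law (which is (2)) in place of Lemma 2.4(1).

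The real work is part (2), which I would deduce from the differential bound
\[
g^-_*\ \leq\ \frac{(\widehat{G}_x^*)'(t)\,t}{\widehat{G}_x^*(t)}\ \leq\ g^+_*,\qquad t>0,\ x\in\Omega.
\]
Differentiating the defining integral gives $\bigl((\widehat{G}_x^*)^{-1}\bigr)'(t)=\widehat{G}_x^{-1}(t)\,t^{-(d+s)/d}$, and inverting the $(g_4)$ bound on $\widehat{g}(x,\cdot)$ yields the power-law control $1/g^+\leq (\widehat{G}_x^{-1})'(t)\,t/\widehat{G}_x^{-1}(t)\leq 1/g^-$. Propagating this through the integral representation of $(\widehat{G}_x^*)^{-1}$, by comparing the integrand with its pointwise envelopes $\tau^{1/g^\pm-(d+s)/d}$ and invoking the arithmetic identity $1/g - s/d = 1/g_*$, produces $1/g^+_*\leq \bigl((\widehat{G}_x^*)^{-1}\bigr)'(t)\,t/(\widehat{G}_x^*)^{-1}(t)\leq 1/g^-_*$; passing to the inverse gives the displayed derivative bound for $\widehat{G}_x^*$, after which the scaling step from part (1) delivers (2).

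The principal obstacle is making that last propagation rigorous and uniform in $x\in\Omega$: the finiteness of the defining integral near $0$ and its divergence at $\infty$, both guaranteed by $(g_5)$, must be invoked to justify the exponent bookkeeping, and because $\widehat{G}_x^{-1}$ is not literally a power law one must first upgrade the logarithmic-derivative bound to a genuine pointwise two-sided power-law envelope before integrating. Once this exponent transfer is executed carefully, parts (1), (2) and (3) all fall out of the same scaling template applied to $J_{s,G_{x,y}}$, $\widehat{G}_x^*$, and $J_{\widehat{G}_x^*}$ respectively.
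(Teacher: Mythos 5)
The paper offers no proof of this lemma --- it is imported verbatim from \cite{M1} --- so there is nothing in-text to compare against; judged on its own, your argument is correct and follows the standard route used in that reference and in the Orlicz--Sobolev literature (Fukagai--Ito--Narukawa). Parts (1) and (3) are exactly the scaling template already used for Lemma \ref{lem1}(4) and Lemma \ref{rho}, and your part (2) computation checks out: the change of variables $\tau=\sigma t$ in \eqref{2eq10} together with the envelope $\sigma^{1/g^{+}}\widehat{G}_x^{-1}(t)\leq\widehat{G}_x^{-1}(\sigma t)\leq\sigma^{1/g^{-}}\widehat{G}_x^{-1}(t)$ for $\sigma\leq1$ and the identity $1/g^{\pm}-s/d=1/g^{\pm}_{*}$ (which is where $g^{\pm}<d/s$ enters, to make $\int_0^1\sigma^{1/g^{\pm}_{*}-1}\,d\sigma=g^{\pm}_{*}$ finite) gives $g^{-}_{*}\leq(\widehat{G}_x^{*})'(t)t/\widehat{G}_x^{*}(t)\leq g^{+}_{*}$ uniformly in $x$, from which (2) follows by integrating the logarithmic derivative.
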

\begin{dfn}
  We say that a generalized N-function $G_{x,y}$ satisfies the fractional boundedness condition if  there exist $C_1,C_2>0$ such that
  \begin{equation}\label{bf}
      C_1\leq G_{x,y}(1)\leq C_2,\ \ \text{for all}\ (x,y)\in \Omega\times\Omega.\tag{$\mathcal{B}_f$}
  \end{equation}
\end{dfn}
\begin{dfn}
  Let $\widehat{A}_x$ and $\widehat{B}_x$ be two generalized  N-functions. We say that $\widehat{A}_x$ essentially grows more slowly than $\widehat{B}_x$ near
infinity, and we write  $\widehat{A}_x\prec\prec \widehat{B}_x$, if for all $k > 0$, we have
$$\lim\limits_{t\rightarrow +\infty}\frac{\widehat{A}_x(kt)}{\widehat{B}_x(t)}=0,\ \ \text{uniformly in }\ x\in \Omega.$$
\end{dfn}
\begin{thm}[see \cite{M8,M9}]\label{thm2}
  Let $s\in (0,1)$, $G_{x,y}$ a generalized N-function satisfying $(g_1)-(g_4)$, and $\Omega$
 a bounded domain in $\mathbb{R}^d$ with $C^{0;1}$-regularity and bounded boundary.
 \begin{enumerate}
     \item[$(1)$] If \eqref{bf} and \eqref{g5} hold, the embedding $W^{s,G_{x,y}}(\Omega)\hookrightarrow L^{\widehat{G}_x^*}(\Omega)$ is continuous.
     \item[$(2)$]Moreover, for any generalized N-function $\widehat{A}_x$ such that $\widehat{A}_x\prec\prec \widehat{G}_x^*$, the embedding $W^{s,G_{x,y}}(\Omega)\hookrightarrow L^{\widehat{A}_x}(\Omega)$ is compact.
 \end{enumerate}
\end{thm}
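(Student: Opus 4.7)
\noindent\textbf{Proof proposal for Theorem \ref{thm2}.}
My plan is to obtain part (1) by a rearrangement/modular argument adapted from the fractional Orlicz--Sobolev case (as in Fern\'andez Bonder--Salort), and to obtain part (2) by combining (1) with a Rellich--type compact embedding into $L^{\widehat{G}_x}(\Omega)$ and a truncation argument exploiting $\widehat{A}_x \prec\prec \widehat{G}_x^*$.

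For part (1), I would first reduce to controlling the modular $J_{\widehat{G}_x^*}(u)$ by the Gagliardo modular $J_{s,G_{x,y}}(u)$ and $J_{\widehat{G}_x}(u)$, because Lemmas \ref{lem1}(4) and \ref{lem2}(3) convert such modular bounds into norm bounds. The boundedness condition \eqref{bf} plus $(g_4)$ gives a two-sided locally uniform envelope $G^-(t) \leq G_{x,y}(t) \leq G^+(t)$ by ordinary N--functions on any bounded set, so that the definition \eqref{2eq10} of $\widehat{G}_x^*$ through $\widehat{G}_x^{-1}$ inherits the pointwise monotonicity and $\Delta_2$ behavior. Then, for smooth $u$, I would either (a) use the Hedberg/maximal-function bound
\[
|u(x)| \;\leq\; C\bigl(\widehat{G}_x^*\bigr)^{-1}\!\Bigl(\,[u]_{s,G_{x,y}}^{\,\cdot}\Bigr)
\]
coming from a layer-cake expansion of $u$ via its rearrangement $u^*$ and the integral \eqref{2eq10}, or (b) apply a Polya--Szeg\"o / isoperimetric rearrangement inequality to reduce to a one-dimensional estimate for $\widehat{G}_x^{-1}$. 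Either route yields $J_{\widehat{G}_x^*}(u/\lambda) \leq 1$ for $\lambda \simeq [u]_{s,G_{x,y}} + \|u\|_{L^{\widehat{G}_x}(\Omega)}$, which is exactly the continuous embedding.

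For part (2), let $\{u_n\}$ be bounded in $W^{s,G_{x,y}}(\Omega)$. Since $\Omega$ is a bounded Lipschitz domain, the classical Rellich-type compactness (which for the Musielak case follows from a Fr\'echet--Kolmogorov criterion applied to the Gagliardo modular plus the $\Delta_2$ property from Remark \ref{rem1}) gives, up to a subsequence, $u_n \to u$ in $L^{\widehat{G}_x}(\Omega)$ and a.e. on $\Omega$. Using the hypothesis $\widehat{A}_x \prec\prec \widehat{G}_x^*$, for any $\varepsilon>0$ pick $M=M(\varepsilon)$ so that $\widehat{A}_x(t) \leq \varepsilon\,\widehat{G}_x^*(t)$ whenever $|t|\geq M$ (uniformly in $x$, by the definition of $\prec\prec$ and \eqref{bf}). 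Splitting
\[
\int_\Omega \widehat{A}_x(|u_n-u|)\,dx
= \!\!\int_{\{|u_n-u|\leq M\}}\!\! \widehat{A}_x(|u_n-u|)\,dx
+ \!\!\int_{\{|u_n-u|> M\}}\!\!\widehat{A}_x(|u_n-u|)\,dx,
\]
the first piece is controlled by $L^{\widehat{G}_x}$-convergence (since on $\{|u_n-u|\leq M\}$ we have $\widehat{A}_x(|u_n-u|)\leq C_M \widehat{G}_x(|u_n-u|)$, using $\widehat{A}_x(M)/\widehat{G}_x(M)$ being locally bounded), while the second is bounded by $\varepsilon \int_\Omega \widehat{G}_x^*(|u_n-u|)\,dx$, which by (1) stays bounded. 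Hence $J_{\widehat{A}_x}(u_n-u)\to 0$, and by Lemma \ref{lem1}(4)-type scaling for $\widehat{A}_x$ (which enjoys \eqref{cA}) this upgrades to norm convergence in $L^{\widehat{A}_x}(\Omega)$.

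The main obstacle I foresee is the verification of the endpoint embedding in step (1): handling the $(x,y)$-dependence of $G_{x,y}$ while invoking a rearrangement inequality inherently ties $G_{x,y}$ to a single reference point. The fractional boundedness condition \eqref{bf} together with $(g_4)$ is exactly what one needs to locally sandwich $G_{x,y}$ between two ordinary N--functions and transport the classical fractional Orlicz estimate; making this sandwich global on $\Omega$ (still bounded, so compact) and compatible with $\widehat{G}_x^*$ defined pointwise in $x$ is the delicate technical point, and is where I expect the bulk of the work to lie.
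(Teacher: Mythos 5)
The paper does not prove Theorem \ref{thm2}; it is recalled from \cite{M8,M9} (note the ``see \cite{M8,M9}'' tag in the theorem header) and used as a black box, for instance inside the proof of Theorem \ref{thm1} on each annulus $B_i$. So there is no in-paper proof to compare your proposal against; I can only comment on it on its own terms.

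Your part (2) scheme is the right one in spirit (Rellich compactness into $L^{\widehat{G}_x}(\Omega)$ plus truncation using $\widehat{A}_x\prec\prec\widehat{G}_x^*$), but the two-way split at level $M$ contains a real gap. On the set $\{|u_n-u|\le M\}$ you assert $\widehat{A}_x(|u_n-u|)\le C_M\,\widehat{G}_x(|u_n-u|)$, which needs $\widehat{A}_x(t)/\widehat{G}_x(t)$ to stay bounded as $t\to 0$. That is exactly the extra hypothesis \eqref{mla1} that the paper imposes in Theorem \ref{compact} for the unbounded case, and it is \emph{not} assumed in Theorem \ref{thm2}, so your estimate is not available. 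The standard fix on a bounded $\Omega$ is a three-way split at levels $\delta<M$: the lowest piece is $\le\widehat{A}_x(\delta)\,|\Omega|$, controlled uniformly in $x$ via a \eqref{bf}-type bound and $\widehat{A}_x(\delta)\to 0$; the middle piece is $\le\widehat{A}_x(M)\,|\{\delta<|u_n-u|\le M\}|\to 0$ by convergence in measure; the top piece is handled as you say using $\prec\prec$ and part (1). You also rely on a Rellich-type compactness $W^{s,G_{x,y}}(\Omega)\hookrightarrow\hookrightarrow L^{\widehat{G}_x}(\Omega)$ which is itself a separate result from \cite{M8,M9} and not free.

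For part (1), the rearrangement route you propose is the delicate point, as you yourself flag, and I do not believe it is the route taken in the cited references. Polya--Szeg\"o decouples a function's values from their location, while $G_{x,y}$ and $\widehat{G}^*_x$ are genuinely position-dependent; and the ``sandwich'' $G^-\le G_{x,y}\le G^+$ on bounded $t$-sets is both too weak for a Sobolev inequality (which is driven by large $t$) and pointed in the wrong direction for the estimate you need, since replacing $G_{x,y}$ by $G^-$ in $J_{s,G_{x,y}}$ \emph{decreases} the Gagliardo modular you want to use as an upper bound. The displayed Hedberg-type inequality is also not a well-formed statement as written. The cited proofs rely instead on the $C^{0;1}$-regularity of $\partial\Omega$ to construct a bounded extension operator and then establish the modular inequality directly; if you want to carry out part (1), that is the more promising route.
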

\begin{lemma}\label{Aux}
    Let $\Omega$ be an open subset of $\mathbb{R}^d$. Let $\widehat{G}_x$ be a generalized N-function  satisfying the assumptions $(g_1)-(g_4)$ and $\widehat{G}_x^*$ its Musielak-Sobolev conjugate
function. Then, there exists a generalized N-function $\widehat{R}_x$ satisfies the following assertions:
\begin{enumerate}
    \item[$(1)$] $\displaystyle{1<r^-\leq\frac{\widehat{r}_x(t)t}{\widehat{R}_x(t)}\leq r^+< \frac{g^-_*}{g^+}},$ for all $x\in \Omega$ and $t>0$, where $\displaystyle{\widehat{R}_x(t):=\int_{0}^t\widehat{r}_x(s)ds}$;
    \item[$(2)$]the condition \eqref{bf};
    \item[$(3)$]$\displaystyle{\widehat{R}_x\circ \widehat{G}_x\prec
 \widehat{G}_x^*,\ \ \text{for all}\ x\in\Omega.}$
\end{enumerate}
\end{lemma}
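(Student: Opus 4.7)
The plan is to take $\widehat{R}_x$ to be a pure power function independent of $x$. Since assumption $(g_4)$ forces $g^+<g_*^-$, the interval $(1,g_*^-/g^+)$ is nonempty, so I would fix any $r$ in it and set $\widehat{R}_x(t):=|t|^r$, $\widehat{r}_x(t):=r|t|^{r-2}t$ (equivalently coming from $R_{x,y}(t)=|t|^r$). This is obviously a generalized N-function. Item (1) holds since $\widehat{r}_x(t)t/\widehat{R}_x(t)\equiv r$, so we may take $r^-=r^+=r$. Item (2) is trivial because $\widehat{R}_x(1)=1$. Only item (3) requires work.

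For (3), I would combine the two-sided power bounds of Lemmas \ref{lem1}(2) and \ref{lem2}(2), anchored at $t=1$ through \eqref{bf}. For $t\geq 1$, Lemma \ref{lem1}(2) together with $\widehat{G}_x(1)\leq C_2$ yields
\[
(\widehat{R}_x\circ\widehat{G}_x)(t)=\widehat{G}_x(t)^r\leq C_2^{\,r}\, t^{r g^+},\qquad x\in\Omega.
\]
For the lower bound on $\widehat{G}_x^*$, I would first establish $\widehat{G}_x^*(1)\geq c_0>0$ uniformly in $x$ as follows: Lemma \ref{lem1}(2) plus \eqref{bf} gives $\widehat{G}_x(s)\geq C_1 s^{g^+}$ for $s\in(0,1]$, hence $\widehat{G}_x^{-1}(\tau)\leq (\tau/C_1)^{1/g^+}$ for small $\tau$; plugging this into the integral representation \eqref{2eq10} produces
\[
(\widehat{G}_x^*)^{-1}(t)\leq K\, t^{\,1/g^+ - s/d},\qquad t\text{ small},
\]
with $1/g^+-s/d>0$ because $g^+<d/s$. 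Choosing $c_0$ so that $K c_0^{1/g^+-s/d}\leq 1$ yields $(\widehat{G}_x^*)^{-1}(c_0)\leq 1$, i.e.\ $\widehat{G}_x^*(1)\geq c_0$. Lemma \ref{lem2}(2) then lifts this to $\widehat{G}_x^*(t)\geq c_0\, t^{g_*^-}$ for $t\geq 1$. Hence, for any $k>0$,
\[
\frac{(\widehat{R}_x\circ\widehat{G}_x)(kt)}{\widehat{G}_x^*(t)}\leq \frac{C_2^{\,r}\, k^{r g^+}}{c_0}\, t^{\,r g^+-g_*^-}\longrightarrow 0\quad\text{as }t\to\infty,
\]
uniformly in $x\in\Omega$, since $r g^+<g_*^-$. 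This proves even the stronger $\widehat{R}_x\circ\widehat{G}_x\prec\prec \widehat{G}_x^*$, which implies the required $\prec$.

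The only delicate step, and the one the rest of the proof hinges on, is the uniform-in-$x$ lower bound $\widehat{G}_x^*(1)\geq c_0>0$. This is precisely where the fractional boundedness condition \eqref{bf} is indispensable: without an $x$-independent anchor for $\widehat{G}_x$ at $t=1$, Lemma \ref{lem2}(2) cannot by itself deliver uniform growth estimates on $\widehat{G}_x^*$, and the comparison $\widehat{R}_x\circ\widehat{G}_x\prec\widehat{G}_x^*$ would not be uniform in $x$. Once this anchoring is in place, all the remaining manipulations are routine consequences of the power-function bounds encapsulated in Lemmas \ref{lem1} and \ref{lem2}.
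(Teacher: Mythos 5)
Your proposal is essentially the same construction as the paper's, but you are more careful about the point that the paper glosses over. The paper also takes $\widehat{R}_x$ to be a power function, namely $\widehat{R}_x(t)=\frac{1}{p(x)}|t|^{p(x)}$ with a (possibly $x$-dependent) exponent $p(x)\in[r^-,r^+]\subset(1,g_*^-/g^+)$; your choice $\widehat{R}_x(t)=|t|^r$ with a fixed $r$ is just the constant-exponent, unnormalized special case, and it gives items $(1)$ and $(2)$ exactly as immediately. For item $(3)$ the paper proceeds as you do, invoking Lemma~\ref{lem1}(2) to get $\widehat{G}_x(kt)\leq \widehat{G}_x(k)t^{g^+}$ and Lemma~\ref{lem2}(2) to get $\widehat{G}_x^*(t)\geq \widehat{G}_x^*(1)t^{g_*^-}$ for $t\geq1$, then concludes from $g^+r^+<g_*^-$ that the ratio tends to zero. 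However, the paper's chain of inequalities leaves both $\widehat{R}_x(\widehat{G}_x(k))$ and $\widehat{G}_x^*(1)$ as $x$-dependent constants and makes no effort to control them uniformly; it therefore really only establishes the pointwise limit for each fixed $x$, whereas in the subsequent proof of Theorem~\ref{comp1} the conclusion $\widehat{R}_x\circ\widehat{G}_x\prec\widehat{G}_x^*$ is immediately used in the uniform form ``there exist $C_7,T>0$ with $\widehat{R}_x(\widehat{G}_x(t))\leq C_7\widehat{G}_x^*(t)$ for all $x$ and all $t\geq T$.'' Your argument closes exactly this gap: the numerator is handled by \eqref{bf} applied to $\widehat{G}_x$, and for the denominator you extract a uniform lower bound $\widehat{G}_x^*(1)\geq c_0>0$ from the integral representation \eqref{2eq10}, using $\widehat{G}_x(s)\geq C_1 s^{g^+}$ on $(0,1]$ to majorize $\widehat{G}_x^{-1}$ and hence $(\widehat{G}_x^*)^{-1}$ near the origin. (You should state explicitly that $g^+<d/s$ is available, which is needed for the exponent $1/g^+-s/d$ to be positive; this is part of the standing hypotheses of Lemma~\ref{lem2}.) So your proposal is correct, follows the paper's route, and in fact upgrades the paper's pointwise conclusion to the uniform statement that is actually needed downstream.
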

\begin{proof}
Let us define the generalized N-function $\widehat{R}_x:\Omega\times \mathbb{R}\longrightarrow \mathbb{R}$ by$$\widehat{R}_x(t):=\frac{1}{p(x)}\vert t\vert^{p(x)},\ \ \text{for all}\ (x,t)\in \Omega\times \mathbb{R},$$ where $p$ is a real-valued function satisfying
$$1<r^- \leq p(x)\leq r^+<\frac{g^-_*}{g^+},\ \ \text{for all}\ x\in \Omega.$$
It's clear that $\widehat{R}_x$ is a generalized N-function and verifies the assertions $(1)$ and $(2)$. Assertion $(3)$ is a consequence from assertion $(1)$. Indeed: Using Lemmas \ref{lem1} and \ref{lem2}, for all $k>0$ and $t>1$, we get
\begin{equation}\label{2eq40}
    \widehat{G}_x(kt)\leq \widehat{G}_x(k)t^{g^+}\ \ \text{and}\ \ \widehat{G}_x^*(1)t^{g^-_*}\leq \widehat{G}_x^*(t),\ \ \text{for all}\ x\in \Omega.
\end{equation}
From assertion $(1)$, we can see that
$$\min\left\lbrace \tau^{r^-},\tau^{r^+}\right\rbrace\widehat{R}_x(t)\leq \widehat{R}_x(\tau t)\leq \max\left\lbrace \tau^{r^-},\tau^{r^+}\right\rbrace \widehat{R}_x(t),\ \text{for all}\ x \in \Omega\ \text{and}\  \tau,t>0.$$
It follows, by \eqref{2eq40}, that
\begin{equation}\label{2eq41}
   \widehat{R}_x\left(\widehat{G}_x(kt)\right) \leq \widehat{R}_x\left(\widehat{G}_x(k)t^{g^+}\right) \leq \widehat{R}_x\left(\widehat{G}_x(k)\right)t^{g^+r^+},\ \ \text{for all}\ k>0,\ t>1\ \text{and}\ x\in \Omega.
\end{equation}
Putting together \eqref{2eq40} and \eqref{2eq41}, we obtain
$$\frac{\widehat{R}_x\left(\widehat{G}_x(kt)\right) }{\widehat{G}_x^*(t)}\leq\frac{\widehat{R}_x\left(\widehat{G}_x(k)\right)t^{g^+r^+}}{\widehat{G}_x^*(1)t^{g^-_*}},\ \ \text{for all}\ k>0,\ t>1\ \text{and}\ x\in \Omega.$$
Since $g^+r^+<g^-_*$, we conclude that
$$\lim\limits_{t\rightarrow +\infty}\frac{\widehat{R}_x\left(\widehat{G}_x(kt)\right) }{\widehat{G}_x^*(t)}=0,\ \ \text{for all}\ k>0,\ \text{and}\ x\in \Omega.$$
Thus, the proof is complete.
\end{proof}
\subsection{Fractional Musielak $g_{x,y}$-Laplacian}
\begin{dfn}
 Let $G_{x,y}$ be a generalized N-function and $s\in(0,1)$. The fractional Musielak $g_{x,y}$-Laplacian is defined by
 \begin{align*}
(-\Delta)^s_{g_{x,y}}u(x):&=\text{p.v.}\int_{\mathbb{R}^d}a_{x,y}\left(\frac{\vert u(x)-u(u)\vert}{\vert x-y\vert^{s}}\right)\frac{\vert u(x)-u(y)\vert}{\vert x-y\vert^{s}}\frac{dy}{\vert x-y\vert^{d+s}}\\
& =\text{p.v.}\int_{\mathbb{R}^d}g_{x,y}\left(\frac{\vert u(x)-u(u)\vert}{\vert x-y\vert^{s}}\right)\frac{dy}{\vert x-y\vert^{d+s}},\ \ \text{for all}\ x\in \mathbb{R}^d
 \end{align*}
 where p.v. is a commonly used abbreviation for "in the principle value sense" and $\displaystyle{G_{x,y}(t)=\int_0^{\vert t\vert}g_{x,y}(\tau)d\tau}$.
\end{dfn}
Under the assumptions $(g_1)-(g_3)$, the operator $(-\Delta)^s_{g_{x,y}}$ is well defined between $W^{s,G_{x,y}}(\mathbb{R}^d)$ and its topological dual space $\left(W^{s,G_{x,y}}(\mathbb{R}^d)\right)^*$. According to \cite{M7}, we have that
\begin{align}\label{2eq85}
  \langle (- \Delta)^s_{g_{x,y}}u, v\rangle&=\int_{\mathbb{R}^d}\int_{\mathbb{R}^d}g_{x,y}\left( \frac{u(x)-u(y)}{\vert x-y\vert^{s}}\right) \frac{v(x)-v(y)}{\vert x-y\vert^{d+s}}dxdy\nonumber\\ &=\int_{\mathbb{R}^d}\int_{\mathbb{R}^d}G^{'}_{x,y}\left( \frac{u(x)-u(y)}{\vert x-y\vert^{s}}\right) \frac{v(x)-v(y)}{\vert x-y\vert^{d+s}}dxdy\nonumber\\
  & =\langle J^{'}_{s,G_{x,y}}(u),v\rangle,\ \ \text{for all}\ u,v\in W^{s,G_{x,y}}(\mathbb{R}^d)
\end{align}
where $\langle \cdot,\cdot\rangle$ is the duality brackets for the pair $\left(\left(W^{s,G_{x,y}}(\mathbb{R}^d)\right)^*,W^{s,G_{x,y}}(\mathbb{R}^d)\right)$.\\

Next, proceeding as in \cite[Theorem 3.14]{M1}, we obtain the following result:
\begin{prop}\label{s+}
Assume that $(g_1)-(g_4)$ holds. Then $J^{'}_{s,G_{x,y}}$ satisfies the $(S_+)$, that is, for every sequence $\lbrace u_n\rbrace_{n\in \mathbb{N}}\subset W^{s,G_{x,y}}(\mathbb{R}^d)$ such that $u_n\rightharpoonup u$ in  $W^{s,G_{x,y}}(\mathbb{R}^d)$
and
$$\limsup_{n\rightarrow +\infty}\langle J^{'}_{s,G_{x,y}}(u_n),u_n-u\rangle\leq 0,$$
we have that
$$u_n\longrightarrow u\ \ \text{in}\ W^{s,G_{x,y}}(\mathbb{R}^d).$$
\end{prop}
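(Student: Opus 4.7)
The plan is to adapt the monotonicity-based argument of \cite[Theorem 3.14]{M1}, whose core ingredient is the Orlicz-type lower bound recorded in Lemma \ref{lem55}. First, I will eliminate one term by weak convergence: since formula \eqref{2eq85} exhibits $J'_{s,G_{x,y}}(u)$ as a fixed element of the topological dual $(W^{s,G_{x,y}}(\mathbb{R}^d))^*$, the assumed weak convergence $u_n \rightharpoonup u$ gives immediately $\langle J'_{s,G_{x,y}}(u), u_n - u\rangle \to 0$. Subtracting this from the hypothesis produces
\[
\limsup_{n\to\infty}\,\langle J'_{s,G_{x,y}}(u_n) - J'_{s,G_{x,y}}(u),\, u_n - u\rangle \leq 0.
\]

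Next I will exploit pointwise monotonicity. Writing $D_n(x,y):=\frac{u_n(x)-u_n(y)}{|x-y|^s}$ and $D(x,y):=\frac{u(x)-u(y)}{|x-y|^s}$, formula \eqref{2eq85} expresses the above pairing as
\[
\int_{\mathbb{R}^d}\!\!\int_{\mathbb{R}^d}\bigl[g_{x,y}(D_n)-g_{x,y}(D)\bigr](D_n-D)\,\frac{\mathrm{d}x\,\mathrm{d}y}{|x-y|^d}.
\]
Lemma \ref{lem55} applied to the integrand (with $\tau=D_n$, $\sigma=D$) bounds the integrand below by $4\,G_{x,y}\!\bigl(\tfrac{D_n-D}{2}\bigr)$. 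Since $D_n-D$ is precisely the fractional difference quotient of $u_n-u$, integrating gives
\[
0\leq 4\,J_{s,G_{x,y}}\!\Bigl(\tfrac{u_n-u}{2}\Bigr)\leq \langle J'_{s,G_{x,y}}(u_n)-J'_{s,G_{x,y}}(u),\,u_n-u\rangle\longrightarrow 0.
\]
Lemma \ref{lem2}(1) then sandwiches the Gagliardo seminorm $[u_n-u]_{s,G_{x,y}}$ between powers of this vanishing modular, forcing $[u_n-u]_{s,G_{x,y}}\to 0$.

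The main obstacle is the last step: upgrading seminorm convergence to convergence in the full norm $\|\cdot\|_{W^{s,G_{x,y}}(\mathbb{R}^d)}=\|\cdot\|_{L^{\widehat{G}_x}(\mathbb{R}^d)}+[\cdot]_{s,G_{x,y}}$. In the bounded-domain analogue this is free from the compact embedding in Theorem \ref{thm2}; in $\mathbb{R}^d$ one must preclude concentration at infinity. My plan is to do this by a localization argument: apply Theorem \ref{thm2} on each ball $B_R$ to extract, along a subsequence, pointwise a.e. convergence and $L^{\widehat{G}_x}(B_R)$ convergence for every $R$, then combine the $\Delta_2$-condition (Remark \ref{rem1}) with Lemma \ref{lem1}(4) to obtain uniform integrability of $\widehat{G}_x(u_n-u)$; the tightness needed at infinity is provided by the Lions-type Theorem \ref{lions}, once one notes that the modular $J_{s,G_{x,y}}(\tfrac{u_n-u}{2})\to 0$ prevents any nontrivial localized mass in the vanishing-seminorm limit. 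A standard subsequence-of-subsequences argument then upgrades the conclusion to the full sequence, completing the proof.
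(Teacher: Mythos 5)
Your opening computation is correct and is the intended engine of the argument: since $J'_{s,G_{x,y}}(u)$ is a fixed element of the dual, weak convergence eliminates $\langle J'_{s,G_{x,y}}(u),u_n-u\rangle$; Lemma \ref{lem55} gives the pointwise lower bound $4G_{x,y}\bigl(\tfrac{D_n-D}{2}\bigr)$ for the integrand in \eqref{2eq85}; and the resulting nonnegative quantity $4J_{s,G_{x,y}}\bigl(\tfrac{u_n-u}{2}\bigr)$ is squeezed to zero, so Lemma \ref{lem2}(1) forces $[u_n-u]_{s,G_{x,y}}\to 0$. You are also right that the paper's citation to \cite[Theorem 3.14]{M1} is a bounded-domain argument (zero trace, Poincar\'e), and that the genuine issue in $\mathbb{R}^d$ is the passage from seminorm convergence to full-norm convergence.

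However, your final paragraph does not close that passage, and the proposed appeal to Theorem \ref{lions} cannot work in principle. Lions' lemma requires $\widehat{A}_x$ to satisfy \eqref{mla1b}, i.e.\ $\widehat{A}_x(t)/\widehat{G}_x(t)\to 0$ as $t\to 0$ uniformly; this explicitly excludes the choice $\widehat{A}_x=\widehat{G}_x$, so the lemma yields convergence only in strictly weaker spaces $L^{\widehat{A}_x}$, never in $L^{\widehat{G}_x}(\mathbb{R}^d)$. Your supporting claim that the vanishing seminorm ``prevents any nontrivial localized mass'' is not substantiated, and in fact the dangerous failure mode here is spreading, not concentration. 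In the model case $\widehat{G}_x(t)=|t|^p/p$, take $\phi$ smooth, compactly supported and nonzero, and set $v_n(x)=n^{-d/p}\phi(x/n)$: then $v_n\rightharpoonup 0$ in $W^{s,p}(\mathbb{R}^d)$, $J_{s,G_{x,y}}(v_n)\sim n^{-sp}\to 0$ and hence, by $(g_4)$, $\langle J'_{s,G_{x,y}}(v_n),v_n\rangle\to 0$, so the $(S_+)$ hypothesis holds with $u=0$; yet $\|v_n\|_{L^p(\mathbb{R}^d)}$ is a nonzero constant. Thus on the unweighted space $W^{s,G_{x,y}}(\mathbb{R}^d)$ the monotonicity estimate alone cannot deliver norm convergence; the conclusion needs an additional tightness input such as the weighted compact embedding $\mathbb{E}\hookrightarrow L^{\widehat{G}_x}(\mathbb{R}^d)$ of Theorem \ref{comp1}, as is in fact the setting in which the proposition is used in Lemma \ref{lemps}, or radial symmetry as in Theorem \ref{thms}.
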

\section{Proof of Theorem \ref{thm1}}
In this section, we prove a continuous embedding result for the
fractional Musielak-Sobolev space in $\mathbb{R}^d$.\\
First, we establish some notions and technical lemmas which are useful in the proof of Theorem \ref{thm1}.\\

Let $\Omega$ be an open subset of $\mathbb{R}^{d}$. For each $u\in W^{s,G_{x,y}}(\Omega)$, we set
\begin{equation}\label{2eq7}
    \rho(u):=J_{\widehat{G}_x}(u)+J_{s,G_{x,y}}(u)
\end{equation}
and
\begin{equation}\label{2eq6}
\Vert u\Vert_{(\Omega)}:=\inf\bigg{\{}\lambda>0\ :\ \rho\bigg{(}\frac{u}{\lambda}\bigg{)}\leq1\bigg{\}}.
\end{equation}

\begin{rem}\label{norm}
For all $u\in W^{s,G_{x,y}}(\Omega)$,   Fatou's lemma gives that $$\rho\bigg{(}\frac{u}{\Vert u\Vert_{(\Omega)}}\bigg{)}\leq1.$$
\end{rem}

\begin{lemma}\label{3.5}
 $\Vert \cdot\Vert_{(\Omega)}$ is a norm in $W^{s,G_{x,y}}(\Omega)$. Moreover, $\Vert \cdot\Vert_{(\Omega)}$ and $\Vert\cdot\Vert_{W^{s,G_{x,y}}(\Omega)}$ are equivalents, with the relation
 \begin{equation}\label{equivalence}
   \frac{1}{2}\Vert u\Vert_{W^{s,G_{x,y}}(\Omega)}\leq \Vert u\Vert_{(\Omega)}\leq 2\Vert u\Vert_{W^{s,G_{x,y}}(\Omega)},\ \text{for all}\ u\in W^{s,G_{x,y}}(\Omega).
 \end{equation}
\end{lemma}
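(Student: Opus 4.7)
The plan is to handle this as a standard Luxemburg--Minkowski argument, adapted to the combined modular $\rho=J_{\widehat{G}_x}+J_{s,G_{x,y}}$, splitting the proof into two independent parts: verifying the norm axioms for $\Vert\cdot\Vert_{(\Omega)}$, and then establishing the two-sided comparison with $\Vert\cdot\Vert_{W^{s,G_{x,y}}(\Omega)}$. The key structural facts we rely on throughout are that $\widehat{G}_x(\cdot)$ and $G_{x,y}(\cdot)$ are even and convex in their last variable (so that $\rho$ is convex and satisfies $\rho(ku)=\rho(|k|u)$), together with the $\Delta_2$-property from Remark~\ref{rem1} which, combined with Fatou's lemma (Remark~\ref{norm}), ensures $\rho(u/\Vert u\Vert_{(\Omega)})\le 1$ for every nonzero $u$.

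For the norm axioms, homogeneity follows from the substitution $\mu=\lambda/|k|$ in the defining infimum, using that $\rho$ depends on $u$ only through $|u|$. The triangle inequality is the usual Minkowski trick: if $a=\Vert u\Vert_{(\Omega)}$ and $b=\Vert v\Vert_{(\Omega)}$, convexity of $\rho$ gives
$$\rho\!\left(\frac{u+v}{a+b}\right)\le \frac{a}{a+b}\,\rho\!\left(\frac{u}{a}\right)+\frac{b}{a+b}\,\rho\!\left(\frac{v}{b}\right)\le 1,$$
hence $\Vert u+v\Vert_{(\Omega)}\le a+b$. Positivity ($\Vert u\Vert_{(\Omega)}=0\Rightarrow u=0$) I will postpone to after the equivalence, since it is then immediate.

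For the equivalence \eqref{equivalence}, I would prove the upper bound first. Set $a=\Vert u\Vert_{L^{\widehat{G}_x}(\Omega)}$ and $b=[u]_{s,G_{x,y}}$; by the Luxemburg property applied to each Orlicz norm (again using $\Delta_2$ plus Fatou), $J_{\widehat{G}_x}(u/a)\le 1$ and $J_{s,G_{x,y}}(u/b)\le 1$. Then the convexity argument above, comparing $(a+b)^{-1}u$ to the convex combination of $u/a$ and $0$ inside each modular, yields
$$J_{\widehat{G}_x}\!\left(\frac{u}{a+b}\right)\le \frac{a}{a+b},\qquad J_{s,G_{x,y}}\!\left(\frac{u}{a+b}\right)\le \frac{b}{a+b},$$
whose sum is $\le 1$, so $\Vert u\Vert_{(\Omega)}\le a+b=\Vert u\Vert_{W^{s,G_{x,y}}(\Omega)}\le 2\Vert u\Vert_{W^{s,G_{x,y}}(\Omega)}$. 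For the lower bound, take $\lambda=\Vert u\Vert_{(\Omega)}$; Remark~\ref{norm} gives $J_{\widehat{G}_x}(u/\lambda)+J_{s,G_{x,y}}(u/\lambda)\le 1$, so each summand is individually $\le 1$, forcing $\Vert u\Vert_{L^{\widehat{G}_x}(\Omega)}\le\lambda$ and $[u]_{s,G_{x,y}}\le \lambda$. Adding produces $\Vert u\Vert_{W^{s,G_{x,y}}(\Omega)}\le 2\Vert u\Vert_{(\Omega)}$, which is the required lower bound. Finally, if $\Vert u\Vert_{(\Omega)}=0$ then $\Vert u\Vert_{W^{s,G_{x,y}}(\Omega)}=0$, hence $u=0$, completing the norm axioms.

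The only delicate point is the attainment statement $\rho(u/\Vert u\Vert_{(\Omega)})\le 1$; this is essentially what Remark~\ref{norm} asserts via Fatou, but it is worth being careful because the $\Delta_2$-condition is what guarantees both the separating-reflexive framework and the fact that the defining infimum behaves like a proper minimum in the limit. Once that point is nailed down, the remainder reduces to two convexity computations and a rearrangement of inequalities, with no further technical obstacle.
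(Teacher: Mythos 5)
Your proposal is correct and follows essentially the same route as the paper: verify the Luxemburg-norm axioms via convexity of the modular $\rho$, then prove the two-sided comparison by testing $\rho$ at appropriate dilations and using Remark~\ref{norm}. The only substantive difference is in the upper bound, where your convex-combination argument at scale $a+b$ (comparing $u/(a+b)$ with $\tfrac{a}{a+b}\cdot\tfrac{u}{a}+\tfrac{b}{a+b}\cdot 0$, and symmetrically) actually yields the sharper estimate $\Vert u\Vert_{(\Omega)}\le \Vert u\Vert_{W^{s,G_{x,y}}(\Omega)}$, whereas the paper only tests at scale $2\Vert u\Vert_{W^{s,G_{x,y}}(\Omega)}$ and therefore proves $\Vert u\Vert_{(\Omega)}\le 2\Vert u\Vert_{W^{s,G_{x,y}}(\Omega)}$; both suffice for the stated lemma.
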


\begin{proof}
First, Let's  prove that $\Vert \cdot\Vert_{(\Omega)}$ is a norm in $W^{s,G_{x,y}}(\Omega)$. To this end, we show that  $\Vert \cdot\Vert$  verifies the well-known three axioms of the norm: \\
 \noindent $(\mathbf{i})$ It is clear that, if $\Vert u\Vert_{(\Omega)}=0$, then $u=0,\ a.a$.\\
  $(\mathbf{ii})$ For  each $\alpha\in \mathbb{R}$, we have
  \begin{align*}
    \Vert \alpha u\Vert_{(\Omega)} & =\inf\bigg{\{}\lambda>0:\ \rho\bigg{(}\frac{\alpha u}{\lambda}\bigg{)}\leq1\bigg{\}}=
     \inf\bigg{\{}|\alpha|\lambda>0:\ \rho\bigg{(}\frac{u}{\lambda}\bigg{)}\leq1\bigg{\}}\\
     &=|\alpha|\inf\bigg{\{}\lambda>0:\ \rho\bigg{(}\frac{u}{\lambda}\bigg{)}\leq1\bigg{\}}
                               =|\alpha| \Vert u\Vert_{(\Omega)}.
  \end{align*}

\noindent$(\mathbf{iii})$ Finally for the triangle inequality, let $u,v\in W^{s,G_{x,y}}(\Omega)$, we compute
  \begin{align*}
    \rho\bigg{(}\frac{u+v}{\Vert u\Vert_{(\Omega)}+\Vert v\Vert_{(\Omega)}}\bigg{)} & =\rho\bigg{(}\frac{\Vert u\Vert_{(\Omega)}}{\Vert u\Vert_{(\Omega)}+\Vert v\Vert_{(\Omega)}}\frac{u}{\Vert u\Vert_{(\Omega)}}
    +\frac{\Vert v\Vert_{(\Omega)}}{\Vert u\Vert_{(\Omega)}+\Vert v\Vert_{(\Omega)}}\frac{v}{\Vert v\Vert_{(\Omega)}}\bigg{)} \\ &\leq\rho\bigg{(}\frac{\Vert u\Vert_{(\Omega)}}{\Vert u\Vert_{(\Omega)}+\Vert v\Vert_{(\Omega)}}\frac{u}{\Vert u\Vert_{(\Omega)}}\bigg{)}
    +\rho\bigg{(}\frac{\Vert v\Vert_{(\Omega)}}{\Vert u\Vert_{(\Omega)}+\Vert v\Vert_{(\Omega)}}\frac{v}{\Vert v\Vert_{(\Omega)}}\bigg{)}\\
    &\leq\frac{\Vert u\Vert_{(\Omega)}}{\Vert u\Vert_{(\Omega)}+\Vert v\Vert_{(\Omega)}}\rho\bigg{(}\frac{u}{\Vert u\Vert_{(\Omega)}}\bigg{)}
    +\frac{\Vert v\Vert_{(\Omega)}}{\Vert u\Vert_{(\Omega)}+\Vert v\Vert_{(\Omega)}}\rho\bigg{(}\frac{v}{\Vert v\Vert_{(\Omega)}}\bigg{)}\\
    &\leq1.
  \end{align*}
  Thus, $$\Vert u+v\Vert_{(\Omega)}\leq \Vert u\Vert_{(\Omega)}+\Vert v\Vert_{(\Omega)}\ \ ,\ \text{for all}\ u,v\in W^{s,G_{x,y}}(\Omega).$$
Second, we show the inequality \eqref{equivalence}. On account of this, we prove the left side of the inequality \eqref{equivalence}. In this way, by using Remark \ref{norm} anyone can check that for each $u\in W^{s,G_{x,y}}(\Omega)$, we have

  $$J_{\widehat{G}_x}\bigg{(}\frac{u}{\Vert u\Vert_{(\Omega)}}\bigg{)}\leq\rho\bigg{(}\frac{u}{\Vert u\Vert_{(\Omega)}}\bigg{)}\leq1\ \ \ \text{and}\ \ \
  J_{s,G_{x,y}}\bigg{(}\frac{u}{\Vert u\Vert_{(\Omega)}}\bigg{)}\leq\rho\bigg{(}\frac{u}{\Vert u\Vert_{(\Omega)}}\bigg{)} \leq1.$$
It follows, by \eqref{2eq4} and \eqref{2eq5}, that $$\|u\|_{L^{\widehat{G}_x}(\Omega)}\leq \Vert u\Vert_{(\Omega)}\ \ \text{and}\ \ [u]_{s,G_{x,y}}\leq \Vert u\Vert_{(\Omega)}.$$
  Therefore,
  $$\frac{1}{2}\Vert u\Vert_{W^{s,G_{x,y}}(\Omega)}\leq \Vert u\Vert_{(\Omega)},\ \text{for all}\ u\in W^{s,G_{x,y}}(\Omega).$$
  For the right side of the inequality  \eqref{equivalence}, we have
  \begin{align*}
    \rho\bigg{(}\frac{u}{2\|u\|_{W^{s,G_{x,y}}(\Omega)}}\bigg{)} & \leq \frac{1}{2}J_{\widehat{G}_x}\bigg{(}\frac{u}{\|u\|_{W^{s,G_{x,y}}(\Omega)}}\bigg{)}+\frac{1}{2}J_{s,G_{x,y}}\bigg{(}\frac{u}{\|u\|_{W^{s,G_{x,y}}(\Omega)}}\bigg{)}\\
    &\leq \frac{1}{2}J_{\widehat{G}_x}\bigg{(}\frac{u}{\|u\|_{L^{G_{x}}(\Omega)}}\bigg{)}+\frac{1}{2}J_{s,G_{x,y}}\bigg{(}\frac{u}{[u]_{s,G_{x,y}}(\Omega)}\bigg{)}\\
&\leq \frac{1}{2}+\frac{1}{2}=1.
  \end{align*}
  Thus,  $$\Vert u\Vert_{(\Omega)}\leq 2\Vert u\Vert_{W^{s,G_{x,y}}(\Omega)},\ \text{for all}\ u\in W^{s,G_{x,y}}(\Omega).$$
  This ends the proof Lemma \ref{3.5}.
\end{proof}

\begin{lemma}\label{rho}
Assume that the assumptions $(g_1)-(g_4)$ hold. For each $u\in W^{s,G_{x,y}}(\Omega)$, we have
$$\min\left\lbrace \Vert u\Vert_{(\Omega)}^{g^-},\Vert u\Vert_{(\Omega)}^{g^+}\right\rbrace\leq \rho(u)\leq \max\left\lbrace \Vert u\Vert_{(\Omega)}^{g^-},\Vert u\Vert_{(\Omega)}^{g^+}\right\rbrace.$$
\end{lemma}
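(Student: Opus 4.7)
The plan is to imitate the proof of Lemma \ref{lem1}(4) and Lemma \ref{lem2}(1), which already encode exactly this kind of behaviour for each of the two pieces of $\rho$ separately. The only new ingredient is to combine them and then exploit the infimum definition of $\|\cdot\|_{(\Omega)}$.

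The first step is a scaling estimate for $\rho$ itself: for every $u\in W^{s,G_{x,y}}(\Omega)$ and every $\tau>0$,
\[
\min\{\tau^{g^-},\tau^{g^+}\}\,\rho(u)\;\le\;\rho(\tau u)\;\le\;\max\{\tau^{g^-},\tau^{g^+}\}\,\rho(u).
\]
To see this I would apply Lemma \ref{lem1}(1) pointwise inside the integrand defining $J_{s,G_{x,y}}$ (with $t=(u(x)-u(y))/|x-y|^s$) and Lemma \ref{lem1}(2) pointwise inside the integrand defining $J_{\widehat{G}_x}$, and then add the two resulting inequalities. This reduces the lemma to a purely elementary argument involving the homogenised functional $\rho$.

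Assume now $u\neq 0$, and write $\lambda:=\|u\|_{(\Omega)}>0$. For the upper bound, Remark \ref{norm} gives $\rho(u/\lambda)\le 1$, so choosing $\tau=\lambda$ in the scaling estimate yields
\[
\rho(u)=\rho\Bigl(\lambda\cdot\tfrac{u}{\lambda}\Bigr)\le \max\{\lambda^{g^-},\lambda^{g^+}\}\,\rho\Bigl(\tfrac{u}{\lambda}\Bigr)\le \max\{\|u\|_{(\Omega)}^{g^-},\|u\|_{(\Omega)}^{g^+}\}.
\]
For the lower bound, fix $\varepsilon\in(0,\lambda)$ and set $\lambda_\varepsilon:=\lambda-\varepsilon$. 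Since $\lambda_\varepsilon<\|u\|_{(\Omega)}$, the definition of the infimum forces $\rho(u/\lambda_\varepsilon)>1$. The scaling estimate with $\tau=\lambda_\varepsilon$ then gives
\[
\rho(u)\ge \min\{\lambda_\varepsilon^{g^-},\lambda_\varepsilon^{g^+}\}\,\rho\Bigl(\tfrac{u}{\lambda_\varepsilon}\Bigr)\ge \min\{\lambda_\varepsilon^{g^-},\lambda_\varepsilon^{g^+}\},
\]
and letting $\varepsilon\downarrow 0$ produces $\rho(u)\ge \min\{\|u\|_{(\Omega)}^{g^-},\|u\|_{(\Omega)}^{g^+}\}$. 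The case $u=0$ is immediate from both sides.

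I do not expect a real obstacle here. The only point that needs a little care is keeping track of which of $\lambda^{g^-}$ and $\lambda^{g^+}$ is the max and which is the min depending on whether $\lambda\gtreqless 1$; but the formulation with $\min$ and $\max$ absorbs both regimes at once, so it is enough to apply Lemma \ref{lem1}(1)--(2) uniformly and not to split into cases. The scaling inequality above is the heart of the argument, and once it is in hand the two directions are essentially dual exploitations of $\rho(u/\lambda)\le 1$ at $\lambda=\|u\|_{(\Omega)}$ and $\rho(u/\lambda)>1$ for $\lambda<\|u\|_{(\Omega)}$.
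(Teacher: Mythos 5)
Your proof is correct and matches the paper's argument almost verbatim: the paper likewise factors $\tau/\tau=1$ inside the integrands and applies Lemma \ref{lem1} (parts (1)--(2)) to pull out $\max\{\tau^{g^-},\tau^{g^+}\}$ or $\min\{\tau^{g^-},\tau^{g^+}\}$, with $\tau=\|u\|_{(\Omega)}$ and Remark \ref{norm} for the upper bound, and $\tau=\sigma<\|u\|_{(\Omega)}$ (so $\rho(u/\sigma)>1$) followed by $\sigma\uparrow\|u\|_{(\Omega)}$ for the lower bound. The only cosmetic difference is that you isolate the scaling inequality for $\rho$ as a standalone step before invoking the infimum definition, whereas the paper carries it out inline.
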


\begin{proof}
 Let $u\in W^{s,G_{x,y}}(\Omega)$. If $\Vert u\Vert_{(\Omega)}=0$, then $u=0$ a.a. and $\rho(u)=0$. Thus,
 $$\min\left\lbrace \Vert u\Vert_{(\Omega)}^{g^-},\Vert u\Vert_{(\Omega)}^{g^+}\right\rbrace\leq \rho(u)\leq \max\left\lbrace \Vert u\Vert_{(\Omega)}^{g^-},\Vert u\Vert_{(\Omega)}^{g^+}\right\rbrace.$$
If $\Vert u\Vert_{(\Omega)}\neq 0$, then,  Lemmas \ref{lem1}, \ref{lem2} and Remark \ref{norm}, give that
  \begin{align*}
    \rho\bigg{(}u\bigg{)} & =
    \int_{\Omega}\widehat{G}_x\bigg{(}\frac{\Vert u\Vert_{(\Omega)}}{\Vert u\Vert_{(\Omega)}}u\bigg{)}dx+
\int_{\Omega}\int_{\Omega}G_{x,y}\bigg{(}\frac{\Vert u\Vert_{(\Omega)}}{\Vert u\Vert_{(\Omega)}}\frac{u(x)-u(y)}{\vert x-y\vert^s}\bigg{)}\frac{dxdy}{|x-y|^{d}}\\
    &\leq \max\left\lbrace \Vert u\Vert_{(\Omega)}^{g^-},\Vert u\Vert_{(\Omega)}^{g^+}\right\rbrace\bigg{[}\int_{\Omega}\widehat{G}_x\bigg{(}\frac{1}{\Vert u\Vert_{(\Omega)}}u\bigg{)}dx+
\int_{\Omega}\int_{\Omega}G_{x,y}\bigg{(}\frac{1}{\Vert u\Vert_{(\Omega)}}\frac{u(x)-u(y)}{\vert x-y\vert^s}\bigg{)}\frac{dxdy}{|x-y|^{d}}\bigg{]}\\
    &= \max\left\lbrace \Vert u\Vert_{(\Omega)}^{g^-},\Vert u\Vert_{(\Omega)}^{g^+}\right\rbrace\rho\bigg{(}\frac{u}{\Vert u\Vert_{(\Omega)}}\bigg{)}\\
    &\leq\max\left\lbrace \Vert u\Vert_{(\Omega)}^{g^-},\Vert u\Vert_{(\Omega)}^{g^+}\right\rbrace.
  \end{align*}
  Let $0<\sigma<\Vert u\Vert_{(\Omega)}$, by \eqref{2eq6},
  $$\rho\bigg{(}\frac{u}{\sigma}\bigg{)}>1.$$
  It follows, by
  Lemma \ref{lem1}, that
    \begin{align*}
    \rho\bigg{(}u\bigg{)} & =
    \int_{\Omega}\widehat{G}_x\bigg{(}\frac{\sigma}{\sigma}u\bigg{)}dx+
\int_{\Omega}\int_{\Omega}G_{x,y}\bigg{(}\frac{\sigma}{\sigma}\frac{u(x)-u(y)}{\vert x-y\vert^s}\bigg{)}\frac{dxdy}{|x-y|^{d}}\\
    &\geq \min\left\lbrace \sigma^{g^-},\sigma^{g^+}\right\rbrace\bigg{[}\int_{\Omega}\widehat{G}_x\bigg{(}\frac{1}{\sigma}u\bigg{)}dx+
\int_{\Omega}\int_{\Omega}G_{x,y}\bigg{(}\frac{1}{\sigma}\frac{u(x)-u(y)}{\vert x-y\vert^s}\bigg{)}\frac{dxdy}{|x-y|^{d}}\bigg{]}\\
    &= \min\left\lbrace \sigma^{g^-},\sigma^{g^+}\right\rbrace\rho\bigg{(}\frac{u}{\sigma}\bigg{)}\\
    &\geq\min\left\lbrace \sigma^{g^-},\sigma^{g^+}\right\rbrace.
  \end{align*}
  Letting $\sigma\longrightarrow \Vert u\Vert_{(\Omega)}$ in the above inequality, we obtain
  $$\min\left\lbrace \Vert u\Vert_{(\Omega)}^{g^-},\Vert u\Vert_{(\Omega)}^{g^+}\right\rbrace\leq \rho(u).$$
 Thus, the proof is complete.
\end{proof}
\begin{proof}[{\bf Proof of Theorem \ref{thm1}}] $(1)$
  Let $u\in W^{s,G_{x,y}}(\mathbb{R}^d)$ such that $\Vert u\Vert_{(\mathbb{R}^d)}=1$. By Lemma
  \ref{rho},
  \begin{equation}\label{2eq8}
  \rho(u)=1.
   \end{equation}
  We set $B_{i}:=\{x\in\mathbb{R}^{d}:\  i\leq|x|<i+1\}$, $i\in\mathbb{N}$, such that $\displaystyle{\mathbb{R}^{d}= \bigcup_{i\in\mathbb{N}}B_{i}}$ and $B_{i}\cap B_{j}\neq\emptyset$, for any $i\neq j$, and $\displaystyle{C:=\sup\bigg{\{}\int_{\mathbb{R}^{d}}\widehat{G}_{x}^*(u(x))dx:\ u\in W^{s,G_{x,y}}(\mathbb{R}^{d}),\Vert u\Vert_{(\mathbb{R}^d)}=1\bigg{\}}.}$\\
In light of  \eqref{2eq7} and \eqref{2eq8}, we see that
  \begin{align}\label{tilder}
 \int_{B_{i}}\widehat{G}_x(u(x))dx+ \int_{B_{i}} \int_{B_{i}}G_{x,y}\bigg{(}\frac{u(x)-u(y)}{|x-y|^{s}}\bigg{)}\frac{dxdy}{\vert x-y\vert^{d}} & \leq \rho(u),\ \text{for all}\ i\in \mathbb{N}.
   \end{align}
  \textbf{Claim:} $\displaystyle{C<\infty.}$\\
Indeed, in view of  \eqref{equivalence} and Theorem \ref{thm2}, there is a constant $C_{0}>0$ such that  $$\|u\|_{L^{\widehat{G}_{x}^*}(B_i)}\leq C_{0}\|u\|_{W^{s,G_{x,y}}(B_i)}\leq2C_{0}\|u\|_{(B_i)}\leq 2C_{0}\|u\|_{(\mathbb{R}^d)}=2C_{0},\ \text{for all}\ i\in\mathbb{N}.$$

Let $i\in\mathbb{N}$, we distinguish two cases:\\
  {\bf Cas 1:} If $1\leq\|u\|_{L^{\widehat{G}_{x}^*}(B_i)}\leq2C_{0}$, then, by \eqref{equivalence}, \eqref{tilder} and  Lemmas \ref{lem1}, \ref{lem2}, and \ref{rho}, we have
  \begin{align}\label{en1}
 \int_{B_{i}}\widehat{G}_{x}^*(u(x))dx& \leq \|u\|_{L^{\widehat{G}_{x}^*}(B_i)}^{g_*^+}\leq (2C_{0})^{g^+_*}\Vert u\Vert_{(B_i)}^{g^+_*}\nonumber\\
       &\leq (2C_{0})^{g_*^+}\bigg{(} \int_{B_{i}}\widehat{G}_x(u(x))dx+ \int_{B_{i}} \int_{B_{i}}G_{x,y}\bigg{(}\frac{u(x)-u(y)}{|x-y|^{s}}\bigg{)}\frac{dxdy}{\vert x-y\vert^{d}}  \bigg{)}^{\frac{g_*^+}{g^+}}\nonumber\\
       &\leq (2C_{0})^{g_*^+}\bigg{(} \int_{B_{i}}\widehat{G}_x(u(x))dx+ \int_{B_{i}} \int_{B_{i}}G_{x,y}\bigg{(}\frac{u(x)-u(y)}{|x-y|^{s}}\bigg{)}\frac{dxdy}{\vert x-y\vert^{d}} \bigg{)}.
  \end{align}
 {\bf Cas 2:} If $\|u\|_{L^{\widehat{G}_{x}^*}(B_i)}<1$, then, also by \eqref{equivalence}, \eqref{tilder} and  Lemmas \ref{lem1}, \ref{lem2}, and \ref{rho}, we have

 \begin{align}\label{en2}
\int_{B_{i}}\widehat{G}_{x}^*(u(x))dx& \leq \|u\|_{L^{\widehat{G}_{x}^*}(B_i)}^{g_*^-}\leq (2C_{0})^{g^-_*}\Vert u\Vert_{(B_i)}^{g^-_*}\nonumber\\
       &\leq (2C_{0})^{g_*^-}\bigg{(} \int_{B_{i}}\widehat{G}_x(u(x))dx+ \int_{B_{i}} \int_{B_{i}}G_{x,y}\bigg{(}\frac{u(x)-u(y)}{|x-y|^{s}}\bigg{)}\frac{dxdy}{\vert x-y\vert^{d}}  \bigg{)}^{\frac{g_*^-}{g^-}}\nonumber\\
       &\leq (2C_{0})^{g_*^-}\bigg{(} \int_{B_{i}}\widehat{G}_x(u(x))dx+ \int_{B_{i}} \int_{B_{i}}G_{x,y}\bigg{(}\frac{u(x)-u(y)}{|x-y|^{s}}\bigg{)}\frac{dxdy}{\vert x-y\vert^{d}} \bigg{)}.
  \end{align}
 Putting together \eqref{en1} and \eqref{en2}, we obtain \begin{align*}
         \int_{\mathbb{R}^{d}}\widehat{G}_{x}^*(u(x))dx & =\sum_{i\in\mathbb{N}}\int_{B_{i}}\widehat{G}_{x}^*(u(x))dx\\
        &\leq\left[(2C_{0})^{g_*^+}+(2C_{0})^{g^-_*}\right]\sum_{i\in\mathbb{N}} \bigg{(} \int_{B_{i}}\widehat{G}_x(u(x))dx+ \int_{B_{i}} \int_{B_{i}}G_{x,y}\bigg{(}\frac{u(x)-u(y)}{|x-y|^{s}}\bigg{)}\frac{dxdy}{\vert x-y\vert^{d}} \bigg{)}\\
        &= \left[(2C_{0})^{g_*^+}+(2C_{0})^{g^-_*}\right]\rho(u)=(2C_{0})^{g_*^+}+(2C_{0})^{g^-_*}.
       \end{align*}
      Therefore, $$C\leq(2C_{0})^{g_*^+}+(2C_{0})^{g^-_*}.$$
Thus, the proof of the claim.\\

Now, let $u\in W^{s,G_{x,y}}(\mathbb{R}^{d})\setminus\{0\}$ and $v:=\displaystyle\frac{u}{\|u\|_{(\mathbb{R}^d)}}$. Using Lemma \ref{lem3}, we infer that $$\|v\|_{L^{\widehat{G}_{x}^*}(\mathbb{R}^d)}\leq \int_{\mathbb{R}^{d}}\widehat{G}_{x}^*(v(x))dx+1\leq C+1.$$
 It follows that $$\|u\|_{L^{\widehat{G}_{x}^*}(\mathbb{R}^d)}\leq (C+1)\|u\|_{(\mathbb{R}^d)}.$$
 Hence, the embedding $W^{s,G_{x,y}}(\mathbb{R}^d)\hookrightarrow L^{\widehat{G}_x^*}(\mathbb{R}^d)$ is continuous.\\
 $(2)$ Let $\widehat{A}_x$ be a generalized N-function satisfying \eqref{2eq60} and \eqref{mla1b}. Then, we have  the following continuous embedding $$W^{s,G_{x,y}}(\mathbb{R}^{d})\hookrightarrow L^{\widehat{A}_x}(\mathbb{R}^{d}).$$
  Indeed: Using \eqref{2eq60} and \eqref{mla1b}, we can find $\delta , T>0$ such that
  $$\widehat{A}_x(t)\leq \widehat{G}_x(t),\ \text{for all}\ \vert t\vert \leq \delta\ \text{and all}\ x\in \mathbb{R}^d$$
  and
  $$\widehat{A}_x(t)\leq \widehat{G}^*_x(t),\ \text{for all}\ \vert t\vert \geq T\ \text{and all}\ x\in \mathbb{R}^d.$$
  It follows, from \eqref{cA}, \eqref{bf}, and Lemma \ref{lem1}, that for all $u\in W^{s,G_{x,y}}(\mathbb{R}^d)$, we have
  \begin{align}
      \int_{\mathbb{R}^d}\widehat{A}_{x}(u(x))dx&\leq \int_{\lbrace \vert u\vert\geq T\rbrace}\widehat{G}_{x}^*(u(x))dx+\int_{\lbrace \vert u\vert\leq \delta\rbrace}\widehat{G}_{x}(u(x))dx+\int_{\lbrace \delta<\vert u\vert<T\rbrace}\widehat{A}_{x}(u(x))dx\nonumber\\
      & \leq \int_{\lbrace \vert u\vert\geq T\rbrace}\widehat{G}_{x}^*(u(x))dx+\int_{\lbrace \vert u\vert\leq \delta\rbrace}\widehat{G}_{x}(u(x))dx+\int_{\lbrace \delta<\vert u\vert<T\rbrace}\widehat{A}_{x}(T)dx\nonumber\\
 & \leq \int_{\lbrace \vert u\vert\geq T\rbrace}\widehat{G}_{x}^*(u(x))dx+\int_{\lbrace \vert u\vert\leq \delta\rbrace}\widehat{G}_{x}(u(x))dx\nonumber\\
 & +\max\left\lbrace T^{\ell_{\widehat{A}_x}}, T^{m_{\widehat{A}_x}}\right\rbrace\int_{\lbrace \delta<\vert u\vert<T\rbrace}\widehat{A}_{x}(1)dx\nonumber\\
 & \leq \int_{\lbrace \vert u\vert\geq T\rbrace}\widehat{G}_{x}^*(u(x))dx+\int_{\lbrace \vert u\vert\leq \delta\rbrace}\widehat{G}_{x}(u(x))dx\nonumber\\
 & +C_{10}\max\left\lbrace T^{\ell_{\widehat{A}_x}}, T^{m_{\widehat{A}_x}}\right\rbrace\left\vert\lbrace \delta<\vert u\vert<T\rbrace\right\vert
\end{align}
 for some constant $C_{10}>0$.\\
It's clear that if $\left \vert \lbrace \delta<\vert u\vert<T\rbrace\right\vert<+\infty$, we get our desired result. Then, to the end of the proof, it sufficient to show that $\left \vert \lbrace \delta<\vert u\vert<T\rbrace\right\vert<+\infty$. In fact, we argue by contradiction, suppose that
\begin{equation}\label{2eq90}
    \left \vert \lbrace \delta<\vert u\vert<T\rbrace\right\vert=+\infty.
\end{equation}
 Using \eqref{bf}, and Lemma \ref{lem1}, we obtain
    \begin{align}\label{ed290}
    |\{\delta<|u|<T\}| &\leq \int_{\{\delta<|u|<T\}}\frac{1}{\widehat{G}_x(\delta)}\widehat{G}_x(u(x))dx\nonumber\\
    & \leq \frac{1}{\min\left\lbrace \delta^{g^-},\delta^{g^+} \right\rbrace} \int_{\{\delta<|u|<T\}}\frac{1}{\widehat{G}_x(1)}\widehat{G}_x(u(x))dx\nonumber\\
    & \leq \frac{1}{C_2\min\left\lbrace \delta^{g^-},\delta^{g^+} \right\rbrace} \int_{\{\delta<|u|<T\}}\widehat{G}_x(u(x))dx\nonumber\\
    & \leq \frac{1}{C_2\min\left\lbrace \delta^{g^-},\delta^{g^+} \right\rbrace} \int_{\mathbb{R}^d}\widehat{G}_x(u(x))dx<+\infty.
    \end{align}
  Thus, a contradiction holds with \eqref{2eq90}.
  This completes the proof of  assertion $(2)$.
\end{proof}
\section{Proof of Theorems \ref{comp1} and \ref{compact}}
Before starting, we recall the definition of the  weighted
fractional Musielak-Sobolev space $\mathbb{E}$
$$\mathbb{E}=\bigg{\{}u\in W^{s,G_{x,y}}(\mathbb{R}^{d}):\
\int_{\mathbb{R}^{d}}V(x)\widehat{G}_x(u)dx<\infty\bigg{\}}.$$ The
space $\mathbb{E}$ is equipped with the following norm
$$\|u\|_\mathbb{E}:= [u]_{s,G_{x,y}}+\|u\|_{(\mathbb{E})},$$ where
\begin{equation}\label{2eq19}
\|u\|_{(\mathbb{E})}:=\inf\bigg{\{}\lambda>0:\ \int_{\mathbb{R}^{d}}V(x)\widehat{G}_x\bigg{(}\frac{u(x)}{\lambda}\bigg{)}dx\leq1\bigg{\}}.
\end{equation}
\begin{lemma}\label{lem33} Assume that $(g_{1})-(g_4)$ and $(V_{1})$ are satisfied. Then,
    $$\min\left\lbrace \Vert u\Vert_{(\mathbb{E})}^{g^-},\Vert u\Vert_{(\mathbb{E})}^{g^+}\right\rbrace\leq \displaystyle\int_{\mathbb{R}^{d}}V(x)\widehat{G}_x(u)dx\leq \max\left\lbrace \Vert u\Vert_{(\mathbb{E})}^{g^-},\Vert u\Vert_{(\mathbb{E})}^{g^+}\right\rbrace,\ \ \text{for all}\ u\in \mathbb{E}.$$
\end{lemma}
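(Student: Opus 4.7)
The statement is a modular--norm equivalence for the Luxemburg-type norm $\|\cdot\|_{(\mathbb{E})}$ associated with the weighted modular $\rho_V(u):=\int_{\mathbb{R}^d}V(x)\widehat{G}_x(u)\,dx$. Since $V(x)\geq V_0>0$ is a fixed nonnegative weight and the scaling inequality of Lemma \ref{lem1}(2) is pointwise in $x$, multiplying by $V(x)$ and integrating preserves it. The whole proof will therefore mirror Lemma \ref{rho}, with $J_{\widehat{G}_x}$ replaced by $\rho_V$.

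First, I would record the weighted analogue of Remark \ref{norm}: for every $u\in\mathbb{E}\setminus\{0\}$,
\[
\int_{\mathbb{R}^d}V(x)\widehat{G}_x\!\left(\frac{u}{\|u\|_{(\mathbb{E})}}\right)dx\leq 1.
\]
This is obtained by choosing a minimizing sequence $\lambda_n\downarrow \|u\|_{(\mathbb{E})}$ with $\int V(x)\widehat{G}_x(u/\lambda_n)\,dx\leq 1$ (which exists by \eqref{2eq19}) and applying Fatou's lemma to the nonnegative integrands $V(x)\widehat{G}_x(u(x)/\lambda_n)$, which converge pointwise to $V(x)\widehat{G}_x(u(x)/\|u\|_{(\mathbb{E})})$ by continuity of $t\mapsto\widehat{G}_x(t)$.

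Second, for the upper bound, set $\lambda:=\|u\|_{(\mathbb{E})}$ and assume $\lambda>0$. Writing $u=\lambda\cdot(u/\lambda)$ and applying Lemma \ref{lem1}(2) with $\tau=\lambda$ pointwise in $x$, then multiplying by the nonnegative weight $V(x)$ and integrating, I get
\[
\int_{\mathbb{R}^d}V(x)\widehat{G}_x(u)\,dx\;\leq\;\max\{\lambda^{g^-},\lambda^{g^+}\}\int_{\mathbb{R}^d}V(x)\widehat{G}_x\!\left(\frac{u}{\lambda}\right)dx\;\leq\;\max\{\lambda^{g^-},\lambda^{g^+}\},
\]
where the last step uses the Fatou step above. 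For the lower bound, I pick any $0<\sigma<\|u\|_{(\mathbb{E})}$, so that, by definition \eqref{2eq19}, $\int V(x)\widehat{G}_x(u/\sigma)\,dx>1$. Applying Lemma \ref{lem1}(2) with $\tau=\sigma$ and multiplying/integrating against $V$ yields
\[
\int_{\mathbb{R}^d}V(x)\widehat{G}_x(u)\,dx\;\geq\;\min\{\sigma^{g^-},\sigma^{g^+}\}\int_{\mathbb{R}^d}V(x)\widehat{G}_x\!\left(\frac{u}{\sigma}\right)dx\;\geq\;\min\{\sigma^{g^-},\sigma^{g^+}\},
\]
and letting $\sigma\uparrow\|u\|_{(\mathbb{E})}$ gives the desired lower estimate. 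The degenerate case $\|u\|_{(\mathbb{E})}=0$ is trivial: by $(V_1)$ this forces $u=0$ a.e., so both sides of the claimed inequality vanish.

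The only mildly technical point is the Fatou step, since the Luxemburg infimum is not a priori attained; everything else is a direct rescaling of the unweighted argument already carried out in the proof of Lemma \ref{rho}. I do not expect any real obstacle beyond bookkeeping.
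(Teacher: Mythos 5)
Your proposal is correct and follows essentially the same route as the paper: apply the scaling inequality of Lemma \ref{lem1}(2) pointwise, multiply by the weight $V(x)$, integrate, and exploit the Luxemburg-norm definition \eqref{2eq19}, with the lower bound obtained by taking $\sigma\uparrow\|u\|_{(\mathbb{E})}$ (the paper writes $\tau=\|u\|_{(\mathbb{E})}-\varepsilon$ and lets $\varepsilon\to 0$, which is the same thing). You are somewhat more explicit than the paper on two bookkeeping points — the Fatou argument showing that the modular of $u/\|u\|_{(\mathbb{E})}$ is at most $1$, and the degenerate case $\|u\|_{(\mathbb{E})}=0$ where $(V_1)$ forces $u=0$ a.e. — but these do not change the substance of the argument.
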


\begin{proof}
Letting $u\in \mathbb{E}\setminus\lbrace 0\rbrace$, and choosing $\tau=\|u\|_{(\mathbb{E})}$ in Lemma \ref{lem1}, we obtain
   $$
    \widehat{G}_x( u)\leq \max\left\lbrace \Vert u\Vert_{(\mathbb{E})}^{g^-},\Vert u\Vert_{(\mathbb{E})}^{g^+}\right\rbrace \widehat{G}_x\bigg{(}\frac{u}{\|u\|_{(\mathbb{E})}}\bigg{)},\ \text{for all}\ x\in\mathbb{R}^{d}.$$
   Then, from assumption $(V_1)$, we have $$
   V(x)\widehat{G}_x( u)\leq \max\left\lbrace \Vert u\Vert_{(\mathbb{E})}^{g^-},\Vert u\Vert_{(\mathbb{E})}^{g^+}\right\rbrace V(x)\widehat{G}_x\bigg{(}\frac{u}{\|u\|_{(\mathbb{E})}}\bigg{)},\ \text{for all}\ x\in\mathbb{R}^{d}.$$
   It follows, by \eqref{2eq19}, that

   $$\int_{\mathbb{R}^{d}}V(x)\widehat{G}(u)dx\leq \max\left\lbrace \Vert u\Vert_{(\mathbb{E})}^{g^-},\Vert u\Vert_{(\mathbb{E})}^{g^+}\right\rbrace.$$
  Now, letting $\varepsilon>0$, and choosing $\tau=\|u\|_{(\mathbb{E})}-\varepsilon$ in Lemma \ref{lem1}, as above  we get
   $$\min\left\lbrace \left(\Vert u\Vert_{(\mathbb{E})}-\varepsilon\right)^{g^-},\left(\Vert u\Vert_{(\mathbb{E})}-\varepsilon\right)^{g^+}\right\rbrace V(x)\widehat{G}_x\bigg{(}\frac{u}{\|u\|_{(\mathbb{E})}-\epsilon}\bigg{)}\leq V(x)\widehat{G}_x( u),\ \text{for all}\ x\in\mathbb{R}^{d}.$$
   Thus, $$\int_{\mathbb{R}^{d}}V(x)\widehat{G}_x( u)dx\geq\min\left\lbrace \left(\Vert u\Vert_{(\mathbb{E})}-\varepsilon\right)^{g^-},\left(\Vert u\Vert_{(\mathbb{E})}-\varepsilon\right)^{g^+}\right\rbrace .$$
  Passing to the limit as $\varepsilon\rightarrow0$ in the previous inequality, we infer that
$$\int_{\mathbb{R}^{d}}V(x)\widehat{G}_x( u)dx\geq\min\left\lbrace \Vert u\Vert_{(\mathbb{E})}^{g^-},\Vert u\Vert_{(\mathbb{E})}^{g^+}\right\rbrace.$$
   This ends the proof.
\end{proof}

\begin{lemma}\label{Aux1}
 Let $\Omega$ be an open subset of $\mathbb{R}^d$ and $\Phi_{x,y}$ be a generalized N-function satisfying the assumption  \eqref{bf} and
there exist $\ell^-,\ell^+\in (1,+\infty)$ such that
$$1<\ell^-\leq\frac{\widehat{\phi}_x(t)t}{\widehat{\Phi}_x(t)}\leq \ell^+,\ \text{for all}\ x\in \Omega\ \text{and}\ t>0,\ \text{where} \ \widehat{\Phi}_x(t):=\int_{0}^t\widehat{\phi}_x(s)ds.$$  Then, the complementary function $\widetilde{\widehat{\Phi}}_x$ of  $\widehat{\Phi}_x$ verifies:
\begin{enumerate}
    \item[$(1)$] the condition \eqref{bf};
    \item[$(2)$] $\displaystyle{1<\widetilde{\ell}^-\leq\frac{\widetilde{\widehat{\phi}}_x(t)t}{\widetilde{\widehat{\Phi}}_x(t)}\leq \widetilde{\ell}^+,\ \text{for all}\ x\in \Omega\ \text{and}\ t>0,\ \text{where} \ \widetilde{\widehat{\Phi}}_x(t):=\int_{0}^t\widetilde{\widehat{\phi}}_x(s)ds}$ and $\displaystyle{\widetilde{\ell}^-=\frac{\ell^-}{\ell^--1}}$ and $\displaystyle{\widetilde{\ell}^+=\frac{\ell^+}{\ell^+-1}}$.
\end{enumerate}
\end{lemma}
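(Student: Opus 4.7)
The plan is to derive both assertions from the Young--Legendre duality. The two identities I will use repeatedly are, first, that $\widetilde{\widehat{\phi}}_x$ is the (generalized) inverse of $\widehat{\phi}_x$, and second, the equality case in Young's inequality \eqref{2eq95}, namely $\widetilde{\widehat{\Phi}}_x(\widehat{\phi}_x(\tau)) = \tau\widehat{\phi}_x(\tau) - \widehat{\Phi}_x(\tau)$ for every $\tau > 0$. Under the hypothesis that $t\widehat{\phi}_x(t)/\widehat{\Phi}_x(t)$ is pinched inside $[\ell^-,\ell^+] \subset (1,\infty)$, the map $\widehat{\phi}_x$ is continuous and strictly increasing from $[0,\infty)$ onto itself, so its classical inverse exists and the two identities above make unambiguous sense.

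For assertion (2), I would fix $x\in\Omega$ and parametrize $t = \widehat{\phi}_x(\tau)$. Combining the two identities gives $\widetilde{\widehat{\phi}}_x(t)\,t = \tau\widehat{\phi}_x(\tau)$ and $\widetilde{\widehat{\Phi}}_x(t) = \tau\widehat{\phi}_x(\tau) - \widehat{\Phi}_x(\tau)$. Setting $q := \tau\widehat{\phi}_x(\tau)/\widehat{\Phi}_x(\tau)$, an immediate algebraic simplification yields
$$ \frac{\widetilde{\widehat{\phi}}_x(t)\,t}{\widetilde{\widehat{\Phi}}_x(t)} \;=\; \frac{q}{q-1}. $$
Since $\ell^- \le q \le \ell^+$ and the function $q \mapsto q/(q-1) = 1 + 1/(q-1)$ is strictly decreasing on $(1,\infty)$, the two-sided estimate in terms of the conjugate constants $\widetilde{\ell}^\pm = \ell^\pm/(\ell^\pm -1)$ follows, with the conjugation reversing the roles of the outer bounds.

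For assertion (1), I would compute $\widetilde{\widehat{\Phi}}_x(1)$ directly from its definition $\widetilde{\widehat{\Phi}}_x(1) = \sup_{\tau \geq 0}(\tau - \widehat{\Phi}_x(\tau))$. The supremum is attained at the unique $\tau^\ast(x) > 0$ satisfying $\widehat{\phi}_x(\tau^\ast(x)) = 1$, so $\widetilde{\widehat{\Phi}}_x(1) = \tau^\ast(x) - \widehat{\Phi}_x(\tau^\ast(x))$. Applying the growth hypothesis at $\tau^\ast(x)$ gives the sandwich $\tau^\ast(x)/\ell^+ \leq \widehat{\Phi}_x(\tau^\ast(x)) \leq \tau^\ast(x)/\ell^-$. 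On the other hand, the usual scaling inequality (the analog of Lemma \ref{lem1}(2) for $\widehat{\Phi}_x$, which is a direct consequence of the $\ell^\pm$-bounds) combined with \eqref{bf} applied to $\widehat{\Phi}_x$ yields $C_1 \min(\tau^{\ast\ell^-},\tau^{\ast\ell^+}) \leq \widehat{\Phi}_x(\tau^\ast(x)) \leq C_2 \max(\tau^{\ast\ell^-},\tau^{\ast\ell^+})$, with $C_1,C_2 > 0$ independent of $x$. Comparing the two pairs of inequalities and distinguishing the cases $\tau^\ast(x) \geq 1$ and $\tau^\ast(x) < 1$ forces $\tau^\ast(x)$ to lie in a fixed interval $[A,B] \subset (0,\infty)$ independent of $x$, whence $\widetilde{\widehat{\Phi}}_x(1) = \tau^\ast(x) - \widehat{\Phi}_x(\tau^\ast(x))$ is pinched between two positive constants uniformly in $x$, establishing \eqref{bf} for $\widetilde{\widehat{\Phi}}_x$.

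The only genuinely technical step is securing the uniform-in-$x$ two-sided estimate on $\tau^\ast(x)$ in part (1); once that is in hand both assertions reduce to short manipulations. Everything else is a direct application of the standard Orlicz-type scaling lemma and Young duality, and the only care needed throughout is to verify that each constant produced depends only on $\ell^\pm$, $C_1$, $C_2$, and not on the point $x \in \Omega$.
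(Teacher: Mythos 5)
Your argument is correct, and in both parts it diverges from the paper's route. For part~$(2)$ the paper simply cites \cite{M1}, whereas you give a clean self-contained proof: parametrize $t=\widehat{\phi}_x(\tau)$, use the Young equality case $\widetilde{\widehat{\Phi}}_x(\widehat{\phi}_x(\tau))=\tau\widehat{\phi}_x(\tau)-\widehat{\Phi}_x(\tau)$ together with $\widetilde{\widehat{\phi}}_x=\widehat{\phi}_x^{-1}$ to reduce the conjugate ratio to $q/(q-1)$ with $q=\tau\widehat{\phi}_x(\tau)/\widehat{\Phi}_x(\tau)\in[\ell^-,\ell^+]$, and invoke monotonicity of $q\mapsto q/(q-1)$. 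You also correctly observe that this yields $\ell^+/(\ell^+-1)\le\cdot\le\ell^-/(\ell^--1)$, i.e.\ the conjugation reverses the ordering of the constants; the paper's stated ordering $\widetilde\ell^-\le\cdot\le\widetilde\ell^+$ with $\widetilde\ell^\pm:=\ell^\pm/(\ell^\pm-1)$ is in fact backwards (one has $\widetilde\ell^-\ge\widetilde\ell^+$ when $\ell^-\le\ell^+$), a small labeling slip inherited from Remark~\ref{rem1}.

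For part~$(1)$ you and the paper both start from the variational formula $\widetilde{\widehat{\Phi}}_x(1)=\sup_{\tau\ge0}\bigl(\tau-\widehat{\Phi}_x(\tau)\bigr)$, but you then localize the supremum at the critical point $\tau^\ast(x)$ where $\widehat{\phi}_x(\tau^\ast)=1$ and prove a uniform two-sided bound on $\tau^\ast(x)$ by playing the growth hypothesis against the scaling inequality plus \eqref{bf}. This is valid (your case analysis $\tau^\ast\ge1$ vs.\ $\tau^\ast<1$ indeed forces $\tau^\ast(x)\in[A,B]$ with $A,B$ depending only on $\ell^\pm,C_1,C_2$), but it is more work than the paper's direct argument: the paper simply sandwiches $\tau-\widehat{\Phi}_x(\tau)$ pointwise between $\tau-C_2\max\{\tau^{\ell^-},\tau^{\ell^+}\}$ and $\tau-C_1\min\{\tau^{\ell^-},\tau^{\ell^+}\}$, takes the supremum of the two $x$-free bounding functions, and notes that both suprema are finite positive constants because $\ell^\pm>1$. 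Your method buys an explicit description of where the supremum is attained; the paper's buys brevity. Both are sound, so this is a genuine, but minor, methodological difference.
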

\begin{proof}
  $(1)$  From the definition \eqref{2eq50}, we have
  $$\widetilde{\widehat{\Phi}}_{x}(t)=\widetilde{\Phi}(x,x,t):=\sup_{\tau\geq 0}\left( t\tau-\widehat{\Phi}_{x}(\tau)\right),\ \ \text{for all}\ x\in \Omega\ \text{and all}\ t>0.$$
  Thus,
  \begin{equation}\label{2eq52}
   \widetilde{\widehat{\Phi}}_{x}(1)=\sup_{\tau\geq 0}\left( \tau-\widehat{\Phi}_{x}(\tau)\right),\ \ \text{for all}\ x\in \Omega.
  \end{equation}
  On the other side, by Lemma \ref{lem1}, it yields that
  $$\min \left\lbrace \tau^{\ell^-},\tau^{\ell^+}\right\rbrace \widehat{\Phi}_{x}(1)\leq \widehat{\Phi}_{x}(\tau)\leq \max \left\lbrace \tau^{\ell^-},\tau^{\ell^+}\right\rbrace\widehat{\Phi}_{x}(1),\ \ \text{for all}\ x\in \Omega\ \text{and all}\ \tau>0.$$
  It follows, by \eqref{bf}, that
  $$\min \left\lbrace \tau^{\ell^-},\tau^{\ell^+}\right\rbrace C_1\leq \widehat{\Phi}_{x}(\tau)\leq \max \left\lbrace \tau^{\ell^-},\tau^{\ell^+}\right\rbrace C_2,\ \ \text{for all}\ x\in \Omega\ \text{and all}\ \tau>0.$$
  Therefore,
  $$\tau-\max \left\lbrace \tau^{\ell^-},\tau^{\ell^+}\right\rbrace C_2\leq \tau -\widehat{\Phi}_{x}(\tau)\leq \tau -\min \left\lbrace \tau^{\ell^-},\tau^{\ell^+}\right\rbrace C_1,\ \ \text{for all}\ x\in \Omega\ \text{and all}\ \tau>0.$$
  Hence, in light of \eqref{2eq52}
  \begin{equation}\label{2eq53}
   \sup_{\tau\geq 0}\left(\tau-\max \left\lbrace \tau^{\ell^-},\tau^{\ell^+}\right\rbrace C_2\right)\leq \widetilde{\widehat{\Phi}}_{x}(1)\leq \sup_{\tau\geq 0}\left(\tau -\min \left\lbrace \tau^{\ell^-},\tau^{\ell^+}\right\rbrace C_1\right),\ \ \text{for all}\ x\in \Omega.
  \end{equation}
  According to the fact that $1<\ell^-\leq \ell^+$ and to the previous inequality, we conclude that there are $C_5,C_6>0$ such that
 $$ C_5\leq \widetilde{\widehat{\Phi}}_{x}(1)\leq C_6,\ \ \text{for all}\ x\in \Omega.$$
 This gives $(1)$. For the assertion $(2)$ see \cite{M1}.
\end{proof}
\begin{lemma}\label{lem4}
 Let $\Omega$ be an open subset of $\mathbb{R}^d$ and $B\subset \Omega$ be measurable with
$\vert B\vert \in (0,+\infty)$. Let $\Phi_{x,y}$ be a generalized N-function satisfying the assumption  \eqref{bf} and
there exist $\ell^-,\ell^+\in (1,+\infty)$ such that
$$1<\ell^-\leq\frac{\widehat{\phi}_x(t)t}{\widehat{\Phi}_x(t)}\leq \ell^+,\ \text{for all}\ x\in \Omega\ \text{and}\ t>0,\ \text{where} \ \widehat{\Phi}_x(t):=\int_{0}^t\widehat{\phi}_x(s)ds.$$
 Then, we have $$C_3\min \left\lbrace \vert B\vert^{\frac{1}{\ell^-}},\vert B\vert^{\frac{1}{\ell^+}}\right\rbrace\leq \Vert \chi_{_B}\Vert_{L^{\widehat{\Phi}_{x}}(\Omega)}\leq C_4\max \left\lbrace \vert B\vert^{\frac{1}{\ell^-}},\vert B\vert^{\frac{1}{\ell^+}}\right\rbrace,\ \ \text{for some}\ C_3,C_4>0.$$
\end{lemma}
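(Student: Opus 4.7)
The plan is to evaluate the modular $J_{\widehat{\Phi}_x}$ on the characteristic function $\chi_B$, use the fractional boundedness condition \eqref{bf} to sandwich this modular by multiples of $|B|$, and then invert the modular-norm relation (the analog of Lemma \ref{lem1}(4) for $\widehat{\Phi}_x$) to convert this into two-sided bounds on the Luxemburg norm $\|\chi_B\|_{L^{\widehat{\Phi}_x}(\Omega)}$.

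First, I compute directly $J_{\widehat{\Phi}_x}(\chi_B) = \int_B \widehat{\Phi}_x(1)\,dx$ and apply \eqref{bf} (which gives $C_1 \leq \widehat{\Phi}_x(1) = \Phi_{x,x}(1) \leq C_2$ for all $x\in\Omega$) to obtain
$$C_1|B| \;\leq\; J_{\widehat{\Phi}_x}(\chi_B) \;\leq\; C_2|B|.$$
Since $\widehat{\Phi}_x$ satisfies the same type of $(\ell^-,\ell^+)$-growth condition that $\widehat{G}_x$ satisfies with $(g^-,g^+)$, the proof of Lemma \ref{lem1}(4) applies verbatim and yields
$$\min\bigl\{\|\chi_B\|_{L^{\widehat{\Phi}_x}(\Omega)}^{\ell^-},\|\chi_B\|_{L^{\widehat{\Phi}_x}(\Omega)}^{\ell^+}\bigr\}\;\leq\; J_{\widehat{\Phi}_x}(\chi_B)\;\leq\;\max\bigl\{\|\chi_B\|_{L^{\widehat{\Phi}_x}(\Omega)}^{\ell^-},\|\chi_B\|_{L^{\widehat{\Phi}_x}(\Omega)}^{\ell^+}\bigr\}.$$

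Writing $\lambda := \|\chi_B\|_{L^{\widehat{\Phi}_x}(\Omega)}$, the upper bound $\min\{\lambda^{\ell^-},\lambda^{\ell^+}\}\leq C_2|B|$ splits into two cases according to whether $\lambda\geq 1$ or $\lambda\leq 1$; in either case one gets $\lambda \leq \max\{(C_2|B|)^{1/\ell^-},(C_2|B|)^{1/\ell^+}\}\leq C_4\max\{|B|^{1/\ell^-},|B|^{1/\ell^+}\}$ with $C_4 := \max\{C_2^{1/\ell^-},C_2^{1/\ell^+}\}$. Symmetrically, the lower bound $\max\{\lambda^{\ell^-},\lambda^{\ell^+}\}\geq C_1|B|$ yields $\lambda\geq \min\{(C_1|B|)^{1/\ell^-},(C_1|B|)^{1/\ell^+}\} \geq C_3\min\{|B|^{1/\ell^-},|B|^{1/\ell^+}\}$ with $C_3 := \min\{C_1^{1/\ell^-},C_1^{1/\ell^+}\}$. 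There is no real obstacle here beyond the case-analysis bookkeeping for the min/max on the two sides of the growth exponents; the role of \eqref{bf} is precisely to prevent the $x$-dependence of $\widehat{\Phi}_x(1)$ from contaminating the constants $C_3,C_4$.
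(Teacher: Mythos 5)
Your proof is correct and follows essentially the same route as the paper's: both evaluate the modular $J_{\widehat{\Phi}_x}(\chi_B)=\int_B\widehat{\Phi}_x(1)\,dx$, invoke \eqref{bf} to sandwich it as $C_1|B|\leq J_{\widehat{\Phi}_x}(\chi_B)\leq C_2|B|$, and combine this with the modular--norm inequality of Lemma \ref{lem1}(4) (applied with exponents $\ell^-,\ell^+$ in place of $g^-,g^+$) to obtain the two-sided norm bound. You spell out the final inversion/case-splitting and the extraction of the constants $C_3,C_4$ a bit more explicitly than the paper does, but the argument is the same.
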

\begin{proof}
 Exploiting Lemma \ref{lem1}, we infer that
 \begin{equation}\label{2eq30}
 \min \left\lbrace \Vert \chi_{_B}\Vert_{L^{\widehat{\Phi}_{x}}(\Omega)}^{\ell^-},\Vert \chi_{_B}\Vert_{L^{\widehat{\Phi}_{x}}(\Omega)}^{\ell^+}\right\rbrace\leq \int_{\Omega}\widehat{\Phi}_{x}(\chi_{_B})dx\leq \max \left\lbrace \Vert \chi_{_B}\Vert_{L^{\widehat{\Phi}_{x}}(\Omega)}^{\ell^-},\Vert \chi_{_B}\Vert_{L^{\widehat{\Phi}_{x}}(\Omega)}^{\ell^+}\right\rbrace.
 \end{equation}
 On the other side, using assumption \eqref{bf}, we find that
 \begin{equation}\label{2eq31}
    C_1\vert B\vert \leq\int_{\Omega}\widehat{\Phi}_{x}(\chi_{_B})dx=\int_{B}\widehat{\Phi}_{x}(1)dx=\int_{B}\Phi(x,x,1)dx\leq C_2\vert B\vert.
 \end{equation}
 Putting together \eqref{2eq30} and \eqref{2eq31}, we deduce that
 $$\min \left\lbrace \left(C_1\vert B\vert\right)^{\frac{1}{\ell^-}},\left(C_1\vert B\vert\right)^{\frac{1}{\ell^+}}\right\rbrace\leq\Vert \chi_{_B}\Vert_{L^{\widehat{\Phi}_{x}}(\Omega)}\leq \max \left\lbrace \left(C_2\vert B\vert\right)^{\frac{1}{\ell^-}},\left(C_2\vert B\vert\right)^{\frac{1}{\ell^+}}\right\rbrace.$$
 This ends the proof.
\end{proof}
\subsection{Proof of Theorem \ref{comp1} (Compact embedding)}

Let $\lbrace u_{n}\rbrace_{n\in\mathbb{N}}$ be a sequence in $\mathbb{E}$ such that $u_{n}\rightharpoonup u$  in $\mathbb{E}$. Therefore, $u_{n}\rightarrow u$ in $L_{loc}^{\widehat{A}_x}(\mathbb{R}^{d})$, where $\widehat{A}_x$ is a generalized N-function such that $\widehat{G}_x\prec\widehat{A}_x\prec\prec\widehat{G}_x^{*}$, see Theorem \ref{thm2}. Our aim is to prove that $u_{n}\rightarrow u$ in $L^{\widehat{G}_x}(\mathbb{R}^{d})$. Indeed: in light of Br\'ezis-Lieb's theorem \cite{M18}, we just need to show that
 $$\alpha_{n}:=\int_{\mathbb{R}^{d}}\widehat{G}_x(u_{n})dx\rightarrow\int_{\mathbb{R}^{d}}\widehat{G}_x(u)dx.
 $$
It's clear that $\lbrace\alpha_{n}\rbrace_{n\in\mathbb{N}}$ is bounded. Then, up to a subsequence still denoted by $\alpha_n$, we have $\alpha_{n}\rightarrow \alpha$. Hence, it follows from Fatou's Lemma that $$\alpha\geq\int_{\mathbb{R}^d}\widehat{G}_x(u)dx.$$
By using local convergence and the fact that $\widehat{G}_x$ satisfies the $\Delta_{2}$-condition, we get
 \begin{equation}\label{c}
  \int_{B_{r}(0)}\widehat{G}_x(u_{n})dx\rightarrow \int_{B_{r}(0)}\widehat{G}_x(u)dx.
 \end{equation}

\vspace{0,3cm}

\noindent \textbf{Claim:} For each $\varepsilon>0$, there exists $r_\varepsilon>0$ such that
 \begin{equation}\label{c1}
  \int_{B_{r_\varepsilon}^{c}(0)}\widehat{G}_x(u_{n})dx<\varepsilon,\ \ \text{for}\ n\in\mathbb{N}\ \text{large}.
 \end{equation}
Indeed: For given $\varepsilon>0$, let $L>0$ be such that
 \begin{equation}\label{ml1}
  \frac{2}{\varepsilon}\max\left\{B_2,\sup_n\max\left\lbrace \Vert u_n\Vert_{\mathbb{E}}^{g^-},\Vert u_n\Vert_{\mathbb{E}}^{g^+}\right\rbrace\right\}<L,
 \end{equation}
where $B_1$ will be defined below. Using Lemma \ref{Aux}, then we can find a generalized N-function  $\widehat{R}_x$ such that $$\widehat{R}_x\circ \widehat{G}_x\prec\widehat{G}_x^*,\ \ \text{for all}\ x\in \mathbb{R}^d.$$
Which is equivalent to that there exist $C_7>0$ and $T>0$ such that
\begin{equation}\label{2eq54}
\widehat{R}_x(\widehat{G}_x(t))\leq C_7\widehat{G}_x^*(t),\ \ \text{for all}\ x\in \mathbb{R}^d\ \text{and all}\ t\geq T.
\end{equation}
Now, we define the function $f:\mathbb{R}_+\longrightarrow \mathbb{R}$ by
$$f(t):=\max \left\lbrace  t^{\frac{1}{\widetilde{r}^-}},t^{\frac{1}{\widetilde{r}^+}}\right\rbrace, \quad \text{all all}\  t\geq 0.$$
Here $\displaystyle{\widetilde{r}^-:=\frac{r^-}{r^--1}}$ and   $\displaystyle{\widetilde{r}^+:=\frac{r^+}{r^+-1}}$, where  $r^-$ and $r^+$ are defined in Lemma \ref{Aux}.
It's clear that $f$ is a continuous function and $\displaystyle\lim_{t\to 0}f(t)=0$. According to $(V_{1})$, we can choose $r>1$ sufficiently large such that
 \begin{equation}\label{ml2}
  f\left(\vert \{x\in B_{r}^{c}(0):V(x)<L \} \vert\right)\leq \frac{\varepsilon}{2B_{2}}.
 \end{equation}

 Let us define the sets
 \[
  \mathcal{A}:=\{x\in B_{r}^{c}(0):V(x)\geq L \}
 \quad \mbox{and} \quad
 \mathcal{B}:=\{x\in B_{r}^{c}(0):V(x)<L \}.
 \]
In view of Lemma \ref{lem33}, we have
 \begin{equation*}
  \int_{\mathcal{A}}\widehat{G}_x(u_{n})dx \leq  \int_{\mathcal{A}}\frac{V(x)}{L}\widehat{G}_x(u_{n})dx\\
\leq  \frac{1}{L}\max\left\lbrace \Vert u_n\Vert_{(\mathbb{E})}^{g^-},\Vert u_n\Vert_{(\mathbb{E})}^{g^+}\right\rbrace\\
\leq  \frac{1}{L}\max\left\lbrace \Vert u_n\Vert_{\mathbb{E}}^{g^-},\Vert u_n\Vert_{\mathbb{E}}^{g^+}\right\rbrace<\frac{\varepsilon}{2}.
 \end{equation*}
On the other side, it follows from Lemma \ref{Aux} and H\"{o}lder's inequality that
 \begin{eqnarray}\label{ineq1}
  \int_{\mathcal{B}}\widehat{G}_x(u_{n}) dx& \leq & 2\left\|\widehat{G}_x(u_n)\right\|_{L^{\widehat{R}_x}(\mathbb{R}^d)}\left\| \chi_{_\mathcal{B}}\right\|_{L^{\widetilde{\widehat{R}}_x}(\mathbb{R}^d)},
 \end{eqnarray}
where $\widetilde{\widehat{R}}_x$ is the complementary function of $\widehat{R}_x$.\\
Using Lemmas \ref{Aux}, \ref{Aux1} and \ref{lem4}, we see that
\begin{eqnarray}\label{ineq2}
        \| \chi_{_\mathcal{B}}\|_{L^{\widetilde{\widehat{R}}_x}(\mathbb{R}^d)}\leq C_4\max \left\lbrace \vert \mathcal{B}\vert^{\frac{1}{\widetilde{r}^-}},\vert \mathcal{B}\vert^{\frac{1}{\widetilde{r}^+}}\right\rbrace=C_4f(\vert \mathcal{B}\vert).
\end{eqnarray}
Using \eqref{2eq54}, Lemma \ref{lem1} and the fact that $\widehat{G}_x$ and $\widehat{R}_x$ satisfy \eqref{bf}, we find that
\begin{eqnarray}\label{ineq3}
\int_{\mathcal{B}}\widehat{R}_x(\widehat{G}_x(u_n))dx&=&    \int_{\mathcal{B}\cap[|u_n|\leq T]}\widehat{R}_x(\widehat{G}_x(u_n))dx+\int_{\mathcal{B}\cap[|u_n|> T]}\widehat{R}_x(\widehat{G}_x(u_n))dx\nonumber\\
 &\leq& \int_{\mathcal{B}\cap[|u_n|\leq T]}\widehat{R}_x(\widehat{G}_x(T))dx+C_7\sup_n\int_{\mathbb{R}^d} \widehat{G}_x^*(u_n)dx\nonumber\\
&\leq& \int_{\mathcal{B}\cap[|u_n|\leq T]}\widehat{R}_x\left(\max\left\lbrace T^{g^-},T^{g^+}\right\rbrace\widehat{G}_x(1)\right)dx+C_7\sup_n\int_{\mathbb{R}^d} \widehat{G}_x^*(u_n)dx\nonumber\\
&\leq& \int_{\mathcal{B}\cap[|u_n|\leq T]}\widehat{R}_x\left(\max\left\lbrace T^{g^-},T^{g^+}\right\rbrace C_8\right)dx+C_7\sup_n\int_{\mathbb{R}^d} \widehat{G}_x^*(u_n)dx\nonumber\\
&\leq& \int_{\mathcal{B}\cap[|u_n|\leq T]}\max\left\lbrace \left(\max\left\lbrace T^{g^-},T^{g^+}\right\rbrace C_8\right)^{r^-}, \left(\max\left\lbrace T^{g^-},T^{g^+}\right\rbrace C_8\right)^{r^+}\right\rbrace\widehat{R}_x\left(1\right)dx\nonumber\\ &+&C_7\sup_n\int_{\mathbb{R}^d} \widehat{G}_x^*(u_n)dx\nonumber\\
&\leq& \int_{\mathcal{B}\cap[|u_n|\leq T]}C_9\max\left\lbrace \left(\max\left\lbrace T^{g^-},T^{g^+}\right\rbrace C_8\right)^{r^-}, \left(\max\left\lbrace T^{g^-},T^{g^+}\right\rbrace C_8\right)^{r^+}\right\rbrace dx\nonumber\\ &+&C_7\sup_n\int_{\mathbb{R}^d} \widehat{G}_x^*(u_n)dx\nonumber\\
& \leq & \vert \mathcal{B}_1\vert T_1 + C_7\sup_n\int_{\mathbb{R}^d} \widehat{G}_x^*(u_n)dx
\end{eqnarray}
where $\mathcal{B}\subset \mathcal{B}_1$ for all $0<\varepsilon<1$ small enough and $$\displaystyle{T_1}:=C_9\max\left\lbrace \left(\max\left\lbrace T^{g^-},T^{g^+}\right\rbrace C_8\right)^{r^-}, \left(\max\left\lbrace T^{g^-},T^{g^+}\right\rbrace C_8\right)^{r^+}\right\rbrace.$$ Next, we define
$$B_2:=2C_0C_4|\mathcal{B}_1|T_1+2C_0C_4\sup_n\int_{\mathbb{R}^d} \widehat{G}_x^*(u_n)dx.$$
Thus, by \eqref{ml2}, \eqref{ineq1}, \eqref{ineq2}, and \eqref{ineq3}, we obtain
$$
\int_{\mathcal{B}}\widehat{G}_x(u_{n})dx\leq B_2 \max \left\lbrace \vert \mathcal{B}\vert^{\frac{1}{\widetilde{r}^-}},\vert \mathcal{B}\vert^{\frac{1}{\widetilde{r}^+}}\right\rbrace=B_2 f(|\mathcal{B}|)\leq \frac{\varepsilon}{2}.
$$
Therefore,
 \[
  \int_{B_{r}^{c}(0)}\widehat{G}_x(u_{n})dx=\int_{\mathcal{A}}\widehat{G}_x(u_{n})dx+\int_{\mathcal{B}}\widehat{G}_x(u_{n})dx<\varepsilon.
 \]
Thus, the proof of Claim.\\
Exploiting the Claim, we find that
 \begin{eqnarray*} \int_{\mathbb{R}^{d}}\widehat{G}_x(u) dx& = & \int_{B_{r}(0)}\widehat{G}_x(u)dx+\int_{B_{r}^{c}(0)}\widehat{G}_x(u)dx\\
 & \geq & \lim_{n\rightarrow\infty}\int_{B_{r}(0)}\widehat{G}_x(u_{n})dx\\
 & = & \lim_{n\rightarrow\infty}\int_{\mathbb{R}^{d}}\widehat{G}_x(u_{n})dx-\lim_{n\rightarrow\infty}\int_{B_{r}^{c}(0)}\widehat{G}_x(u_{n})dx\\
 & \geq & \alpha-\varepsilon.
 \end{eqnarray*}
This ends the proof.
\subsection{Proof of Theorem \ref{compact} (Compact embedding)}

  Since $\widehat{A}_x\prec \prec\widehat{G}_x^{*}$, for given $\varepsilon>0$, there exists $T>0$ such that
     \begin{equation}\label{ml4}
      \frac{\widehat{A}_x(\vert t\vert)}{\widehat{G}_x^{*}(\vert t\vert)}\leq \frac{\varepsilon}{2\kappa}, \quad \vert t\vert \geq T,\ \ \text{for all}\ x\in \mathbb{R}^d
     \end{equation}
    where $\kappa>0$ will be chosen later. Let $\lbrace u_{n}\rbrace_{n\in \mathbb{N} }\subset \mathbb{E}$ be a sequence such that $u_{n}\rightharpoonup 0$  in $\mathbb{E}$. In view of Theorem \ref{comp1}, it follows that
 \begin{equation}\label{2eq20}
     u_{n}\rightarrow0\  \ \text{in}\ \  L^{\widehat{G}_x}(\mathbb{R}^{d}).
 \end{equation}
 Next, we consider the following decomposition:
     \begin{equation}\label{ml5}
      \int_{\mathbb{R}^{d}}\widehat{A}_x(\vert u_{n}\vert)dx=\int_{\{\vert u_{n}\vert\geq T\}}\widehat{A}_x(\vert u_{n}\vert)dx+\int_{\{\vert u_{n}\vert<T\}}\widehat{A}_x(\vert u_{n}\vert)dx.
     \end{equation}
    Using Theorem \ref{thm1}, we can we choose
      \begin{equation}\label{k}    \kappa:=\sup_n\int_{\mathbb{R}^{d}}\widehat{G}_x^*(\vert u_{n}\vert)dx<+\infty.
      \end{equation}
     It follows, from \eqref{ml4}, that
      \begin{equation}\label{ml6}
       \int_{\{\vert u_{n}\vert\geq T\}}\widehat{A}_x(\vert u_{n}\vert)dx\leq \frac{\varepsilon}{2\kappa}\int_{\mathbb{R}^{d}}\widehat{G}_x^*(\vert u_{n}\vert)dx\leq \frac{\varepsilon}{2}.
      \end{equation}
To the end of the proof, we shall study the integral in \eqref{ml5} on the set  $\left\lbrace\vert u_{n}\vert< T\right\rbrace$. For that, we use one of the assumptions \eqref{mla1} or \eqref{mla2}.  

      \subsubsection{\textbf{Proof of Theorem \ref{compact} assuming \eqref{mla1}}}

      Let $p\in(0,1)$. Using H\"{o}lder's inequality, we find that
      \begin{equation}\label{ml9}
       \int_{\{\vert u_{n}\vert<T\}}\widehat{A}_x(\vert u_{n}\vert)dx\leq \left[\int_{\{\vert u_{n}\vert<T\}}\left(\frac{\widehat{A}_x(\vert u_{n}\vert)}{\widehat{G}_x(\vert u_{n}\vert)^{p}} \right)^\frac{1}{1-p}dx \right]^{1-p}\left[\int_{\mathbb{R}^{d}}\widehat{G}_x(\vert u_{n}\vert)dx \right]^{p}.
      \end{equation}
     \vspace{0,3cm}

    In view of assumption \eqref{mla1}, there exist $\delta,C>0$ such that $$\widehat{A}_x(\vert u_n\vert)\leq C\widehat{G}_x(\vert u_n\vert),\ \text{for all}\ \vert u_n\vert \leq \delta\ \text{and}\ x\in \mathbb{R}^d.$$
 If $\delta<T$, then, from \eqref{bf} and Lemma \ref{lem1}, we find that
       \[
        \frac{\widehat{A}_x(\vert u_n\vert)}{\widehat{G}_x(\vert u_n\vert)}\leq \frac{\widehat{A}_x(T)}{\widehat{G}_x(\delta)}\leq \frac{\max\left\lbrace T^{\ell_{\widehat{A}_x}},T^{m_{\widehat{A}_x}} \right\rbrace \widehat{A}_x(1)}{\min\left\lbrace \delta^{g^-},\delta^{g^+} \right\rbrace \widehat{G}_x(1)}\leq \widetilde{C}\frac{\max\left\lbrace T^{\ell_{\widehat{A}_x}},T^{m_{\widehat{A}_x}} \right\rbrace }{\min\left\lbrace \delta^{g^-},\delta^{g^+} \right\rbrace },\ \text{for all}\ \vert u_n\vert\in[\delta,T]\ \text{and}\  x\in \mathbb{R}^d
       \]
    for some constant $\widetilde{C}>0$ independent from $x$.
Therefore,
       \[
        \frac{\widehat{A}_x(\vert u_n\vert)}{\widehat{G}_x(\vert u_n\vert)}\leq \widetilde{C}_1^{1-p},\ \text{for all}\ \vert u_n\vert\leq T \ \text{and}\  x\in \mathbb{R}^d,
       \]
    with $\widetilde{C}_1^{1-p}:=\max\left\{C, \widetilde{C}\frac{\max\left\lbrace T^{\ell_{\widehat{A}_x}},T^{m_{\widehat{A}_x}} \right\rbrace }{\min\left\lbrace \delta^{g^-},\delta^{g^+} \right\rbrace }\right\}$. Thus,
     \begin{equation}\label{ml7}
       \left(\frac{\widehat{A}_x(\vert u_n\vert)}{\widehat{G}_x(\vert u_n\vert)^{p}} \right)^\frac{1}{1-p}\leq \widetilde{C}_1\widehat{G}_x(\vert u_n\vert),\ \text{for all}\ \vert u_n\vert\leq T \ \text{and}\  x\in \mathbb{R}^d.
      \end{equation}
Finally, from \eqref{2eq20},  there exists $n_{0}\in\mathbb{N}$ such that
       \begin{equation}\label{ml8}
        \int_{\mathbb{R}^{d}}\widehat{G}_x(\vert u_{n}\vert)dx<\frac{\varepsilon}{2\widetilde{C}_1^{1-p}} , \quad \mbox{for all } n>n_{0}.
       \end{equation}
Hence, by \eqref{ml6}--\eqref{ml8}, we deduce that
       \begin{equation}\label{mla8}
        \int_{\mathbb{R}^{d}}\widehat{A}_x(\vert u_{n}\vert)dx<\varepsilon.
       \end{equation}
       This completes the proof.
    \subsubsection{\textbf{Proof of Theorem \ref{compact} assuming \eqref{mla2}}}
Now, we shall suppose that \eqref{mla2} holds. In this
case, there exists $n_0\in \mathbb{N}$ such that
\begin{equation}\label{2eq55}
\int_{\mathbb{R}^d} \widehat{G}_x(\vert u_n\vert) dx\leq \min\left\lbrace \frac{\varepsilon}{4\widetilde{C}_3},\left(\frac{\varepsilon}{4k^{1-a}}\right)^{\frac{1}{a}} \right\rbrace,\ \ \text{for all}\ n\geq n_0
\end{equation}
where $\widetilde{C}_3$ will be defined later.\\
In light of assumption \eqref{mla2}, we see that
\begin{align}\label{2eq56}
    \int_{\left\lbrace \vert u_n\vert\leq1\right\rbrace}\widehat{A}_x(\vert u_n\vert)dx& \leq \int_{\left\lbrace \vert u_n\vert\leq1\right\rbrace}\widehat{G}_x(\vert u_n\vert)^a\widehat{G}_x^*(\vert u_n\vert)^{1-a}dx\nonumber\\
    & \leq \left(\int_{\left\lbrace \vert u_n\vert\leq1\right\rbrace}\widehat{G}_x(\vert u_n\vert)dx\right)^a\left(\int_{\left\lbrace \vert u_n\vert\leq1\right\rbrace}\widehat{G}_x^*(\vert u_n\vert)dx\right)^{1-a}\nonumber\\
   & \leq\frac{\varepsilon}{4}.
\end{align}
If $1<T$, it follows, from \eqref{2eq55} , \eqref{bf} and Lemma \ref{lem1}, that for all  $n\geq n_0$
 we have
 \begin{align}\label{2eq57}
    \int_{\left\lbrace 1\leq \vert u_n\vert\leq T\right\rbrace}\widehat{A}_x(\vert u_n\vert)dx&\leq \int_{\mathbb{R}^d}\frac{\widehat{A}_x(T)}{\widehat{G}_x(1)}\widehat{G}_x(\vert u_n\vert)dx\nonumber\\
    & \leq \max\left \lbrace T^{\ell_{\widehat{A}_x}},T^{m_{\widehat{A}_x}} \right\rbrace \int_{\mathbb{R}^d}\frac{\widehat{A}_x(1)}{\widehat{G}_x(1)}\widehat{G}_x(\vert u_n\vert)dx\nonumber\\
    &\leq \widetilde{C}_2\max\left \lbrace T^{\ell_{\widehat{A}_x}},T^{m_{\widehat{A}_x}} \right\rbrace \int_{\mathbb{R}^d}\widehat{G}_x(\vert u_n\vert)dx\nonumber\\
    &\leq \widetilde{C}_3\int_{\mathbb{R}^d}\widehat{G}_x(\vert u_n\vert)dx\nonumber\\
    &<\frac{\varepsilon}{4},
\end{align}
for some constants $\widetilde{C}_2>0$ and $\displaystyle{\widetilde{C}_3:=\widetilde{C}_2\max\left \lbrace T^{\ell_{\widehat{A}_x}},T^{m_{\widehat{A}_x}} \right\rbrace }$.\\
By \eqref{2eq55}, \eqref{2eq56} and \eqref{2eq57}, we conclude \eqref{mla8}.

 \section{Proof of Theorem \ref{lions} (Lions Lemma type result)}

    Let $\lbrace u_{n}\rbrace_{n\in\mathbb{N}}\subset \mathbb{E}$ be satisfying \eqref{lionss}.  Since $\widehat{A}_x\prec\prec \widehat{G}_x^{*}$, for given $\varepsilon>0$, there exists $T>0$ such that
    \begin{equation}\label{l1}
    \frac{\widehat{A}_x(\vert t\vert)}{\widehat{G}_x^{*}(\vert t\vert)}\leq \frac{\varepsilon}{3\kappa}, \quad \text{for all}\ \vert t\vert \geq T\ \text{and}\ x\in \mathbb{R}^d,
    \end{equation}
    where $\kappa$ is defined in \eqref{k}
     \[
     \kappa=\sup_n\int_{\mathbb{R}^{d}}\widehat{G}_x^{*}(\vert u_{n}\vert)dx<+\infty.
     \]
    From \eqref{mla1b}, there exists $\delta>0$ such that
    \begin{eqnarray}\label{l3}
    \frac{\widehat{A}_x(|t|)}{\widehat{G}_x(|t|)}\leq \frac{\varepsilon}{3\theta},\qquad \text{for all}\ |t|<\delta\ \text{and}\ x\in \mathbb{R}^d,
    \end{eqnarray}
    where $$\theta:=\sup_{n}\int_{\mathbb{R}^d}\widehat{G}_x(|u_n|)dx<+\infty\ (\text{from Theorem \ref{thm1}}).$$
    Let us consider the following decomposition
    \begin{equation}\label{l2}
     \int_{\mathbb{R}^{d}}\widehat{A}_x(\vert u_{n}\vert)dx=\int_{\{\vert u_{n}\vert\leq \delta\}}\widehat{A}_x(\vert u_{n}\vert)dx+\int_{\{\delta<|u_n|<T\}}\widehat{A}_x(\vert u_{n}\vert)dx+\int_{\{\vert u_{n}\vert\geq T\}}\widehat{A}_x(\vert u_{n}\vert)dx.
    \end{equation}
    In view of \eqref{l1} we have
    \begin{equation}\label{mlb0}
    \int_{\{\vert u_{n}\vert\geq T\}}\widehat{A}_x(\vert u_{n}\vert)dx\leq \frac{\varepsilon}{3\kappa}\int_{\mathbb{R}^{d}}\widehat{G}_x^{*}(\vert u_{n}\vert)dx\leq \frac{\varepsilon}{3}.
    \end{equation}
    It follows, from \eqref{l3}, that
    \begin{eqnarray}\label{mlb1}
        \int_{\{\vert u_{n}\vert\leq\delta\}}\widehat{A}_x(\vert u_{n}\vert)dx \leq \frac{\varepsilon}{3\theta}\int_{\{|u_n|\leq\delta\}}\widehat{A}_x(|u_n|)dx\leq \frac{\varepsilon}{3}.
    \end{eqnarray}
    At this level, there are two cases to consider. In the first one, we suppose that
    \begin{equation}\label{ed1}
    \lim_{n \to \infty} |\{\delta<|u_n|<T\}|= 0.
    \end{equation}
    Thus, there exists $n_{0}\in\mathbb{N}$ such that
     \begin{equation}\label{ml10}
      |\{\delta<|u_n|<T\}|<\dfrac{\varepsilon}{3\widetilde{C}_5\widetilde{C}_4}, \quad n\geq n_{0},
     \end{equation}
   where $\widetilde{C}_4,\widetilde{C}_5>0$ will be defined latter. Hence, we obtain that
    \begin{align}\label{ed2}
    |\{\delta<|u_n|<T\}| &\leq \int_{\{\delta<|u_n|<T\}}\frac{1}{\widehat{G}_x(\delta)}\widehat{G}_x(u_n)dx\nonumber\\
    & \leq \frac{1}{\min\left\lbrace \delta^{g^-},\delta^{g^+} \right\rbrace} \int_{\{\delta<|u_n|<T\}}\frac{1}{\widehat{G}_x(1)}\widehat{G}_x(u_n)dx\nonumber\\
    & \leq \frac{\max\left\lbrace T^{g^-},T^{g^+} \right\rbrace}{\min\left\lbrace \delta^{g^-},\delta^{g^+} \right\rbrace} \int_{\{\delta<|u_n|<T\}}\frac{\widehat{G}_x(1)}{\widehat{G}_x(1)}dx\nonumber\\
   & = \frac{\max\left\lbrace T^{g^-},T^{g^+} \right\rbrace}{\min\left\lbrace \delta^{g^-},\delta^{g^+} \right\rbrace}  |\{\delta<|u_n|<T\}|\nonumber\\
   &=\widetilde{C}_5 |\{\delta<|u_n|<T\}|.
    \end{align}
    For $n\geq n_{0}$, it follows, from Lemma \ref{lem1}, \eqref{bf}, and \eqref{ml10}, that
     \begin{align}\label{ml12}
     \int_{\{\delta<|u_n|<T\}}\widehat{A}_x(u_{n})dx&\leq \int_{\{\delta<|u_n|<T\}}\frac{\widehat{A}_x(T)}{\widehat{G}_x(\delta)}\widehat{G}_x(u_{n})dx\nonumber\\
   &\leq\frac{\max\left\lbrace T^{\ell_{\widehat{A}_x}},T^{m_{\widehat{A}_x}} \right\rbrace }{\min\left\lbrace \delta^{g^-},\delta^{g^+} \right\rbrace }\int_{\{\delta<|u_n|<T\}}\frac{\widehat{A}_x(1)}{\widehat{G}_x(1)}\widehat{G}_x(u_{n})dx\nonumber\\
   & \leq\widetilde{C}_3\frac{\max\left\lbrace T^{\ell_{\widehat{A}_x}},T^{m_{\widehat{A}_x}} \right\rbrace }{\min\left\lbrace \delta^{g^-},\delta^{g^+} \right\rbrace }\int_{\{\delta<|u_n|<T\}}\widehat{G}_x(u_{n})dx\nonumber\\
    & \leq\widetilde{C}_4\int_{\{\delta<|u_n|<T\}}\widehat{G}_x(u_{n})dx\nonumber\\
  & <\dfrac{\varepsilon}{3},
     \end{align}
   for some $\widetilde{C}_3>0$ and $\displaystyle{\widetilde{C}_4:=\widetilde{C}_3\frac{\max\left\lbrace T^{\ell_{\widehat{A}_x}},T^{m_{\widehat{A}_x}} \right\rbrace }{\min\left\lbrace \delta^{g^-},\delta^{g^+} \right\rbrace }}.$\\
    Therefore, by using \eqref{mlb0}, \eqref{mlb1} and \eqref{ml12}, we deduce that
    \begin{equation}
        \int_{\mathbb{R}^d} \widehat{A}_x(u_n)dx \leq \varepsilon,\ \ \text{for each}\ \varepsilon > 0.
    \end{equation}
  This finishes the proof for the first case.

 In the second case, up to a subsequence, we assume that
    \begin{equation}
    \lim_{n \to \infty} |\{\delta<|u_n|<T\}| = M \in (0, \infty).
    \end{equation}
   Let us prove that this case does not hold. For this purpose, we prove the following claim:\\
{\bf Claim:} There exist $y_0\in\mathbb{R}^d$ and $\sigma > 0$ such that \begin{equation}
    0 < \sigma \leq |\{\delta<|u_{n}|<T\}\cap B_r(y_0)|
    \end{equation}
    holds true for a subsequence of $\lbrace u_n\rbrace_{n\in \mathbb{N}}$ which is also labeled as $u_n$. The proof follows arguing by contradiction. Indeed, for each $\varepsilon > 0, k \in \mathbb{N}$ we obtain that
    \begin{equation}\label{ed7}
    |\{\delta<|u_{n}|<T\}\cap B_r(y)| < \frac{\varepsilon}{2^k}
    \end{equation}
    holds for all $y \in \mathbb{R}^d$. Notice also that the last estimate holds for any subsequence of $u_n$. Without loss of generality we take just the sequence $u_n$. Now, choose $\lbrace y_k\rbrace_{k\in \mathbb{N}} \subset \mathbb{R}^d$ such that $\displaystyle{ \cup_{k=1}^{\infty} B_r (y_k) = \mathbb{R}^d}$ and using \eqref{ed7}, we write
    \begin{eqnarray}
    |\{\delta<|u_n|<T\}| &=& |\{\delta<|u_n|<T\} \cap (\cup_{k=1}^{\infty} B_r (y_k))|  \nonumber \\
    &\leq&
    \sum_{k=1}^{\infty}     |\{\delta<|u_n|<T\} \cap B_r(y_k)| \leq
    \sum_{k=1}^{\infty}     \frac{\varepsilon}{2^k} = \varepsilon
    \end{eqnarray}
    where $\varepsilon > 0$ is arbitrary. Up to a subsequence it follows from the last estimate that
    \begin{equation}
    0 < M  = \lim_{n \to \infty} |\{\delta<|u_n|<T\}| \leq \varepsilon
    \end{equation}
    which does not make sense for $\varepsilon \in (0, M)$. Thus the proof of Claim follows.

    At this stage, by using Claim and \eqref{lionss}, \eqref{bf} and Lemma \ref{lem1}, we observe that
    \begin{eqnarray}
    0 &<& \sigma \leq |\{\delta<|u_{n}|<T\}\cap B_r(y_0)| \leq  \int_{B_r(y_0)} \frac{1}{\widehat{G}_x(\delta)}\widehat{G}_x(u_n)dx \nonumber \\
  & \leq& \min\left\lbrace \delta^{g^-},\delta^{g^+} \right\rbrace \int_{B_r(y_0)} \frac{1}{\widehat{G}_x(1)}\widehat{G}_x(u_n)dx \nonumber \\
    &\leq& \widetilde{C}_6\min\left\lbrace \delta^{g^-},\delta^{g^+} \right\rbrace\sup_{y \in \mathbb{R}^d} \int_{B_r(y)} \widehat{G}_x(u_n)dx  \to 0\ \text{as}\ n \to \infty.
    \end{eqnarray}
 This contradiction proves that the second case is impossible. In other words, we prove that $M = 0$ is always verified. Hence, our result follows from the first  case. This ends the proof.

\section{Proof of Theorem \ref{thms} (Strauss radial embedding)}
Let $\lbrace u_n\rbrace_{n\in \mathbb{N}}\subset W^{s,G_{x,y}}_{rad}(\mathbb{R}^d)$ be a bounded sequence. Since $ W^{s,G_{x,y}}_{rad}(\mathbb{R}^d)$ is a reflexive space, up to subsequence, still denoted
by $u_n$,
\begin{equation}\label{2eq62}
   u_n\rightharpoonup 0\ \ \text{in}\ W^{s,G_{x,y}}_{rad}(\mathbb{R}^d).
\end{equation}
Using the continuous embedding $W^{s,G_{x,y}}(\mathbb{R}^d)\hookrightarrow L^{\widehat{G}_x}(\mathbb{R}^d)$, we could find a constant $C > 0$ such that
\begin{equation}\label{2eq63}
    \int_{\mathbb{R}^d}\widehat{G}_x(u_n)dx<C.
\end{equation}
Let us fix $r>0$. Since $u_n$  is radially symmetric for all $n\in \mathbb{N}$,
\begin{equation}\label{2eq64}
  \int_{B_{r}(y_1)}\widehat{G}_x(u_n)dx = \int_{B_{r}(y_2)}\widehat{G}_x(u_n)dx,\ \text{for all}\ y_1,y_2\in  \mathbb{R}^d\ \text{and}\ \vert y_1\vert=\vert y_2\vert.
\end{equation}
 In the sequel, for each $y\in \mathbb{R}^d$, $\vert y\vert >r$, we denote by $\gamma(y)$ the maximum of the integers $j \geq 1$ such that there exist $y_1,y_2,\cdots,y_j\in \mathbb{R}^d$,
 with
 $$\vert y_1\vert=\vert y_2\vert=\cdots=\vert y_j\vert=\vert y\vert\ \ \text{and}\ \ B_r(y_i)\cap B_r(y_k)=\emptyset,\ \ \text{whenever}\ i\neq k.$$
 From the above definition, it is clear that
\begin{equation}\label{2eq65}
    \gamma(y)\longrightarrow +\infty\ \ \text{as}\ \vert y\vert\longrightarrow +\infty.
\end{equation}
 Let $y\in \mathbb{R}^d$, $\vert y\vert>r$ and choose $y_1,\cdots,y_{\gamma(y)}\in \mathbb{R}^d$ as above. Thus, by \eqref{2eq63}, \eqref{2eq64} and \eqref{2eq65}, we obtain
\begin{align*}   C&>\int_{\mathbb{R}^d}\widehat{G}_x(u_n)dx\geq \sum_{i=1}^{\gamma(y) }\int_{B_r(y_i)}\widehat{G}_x(u_n)dx\\
    & \geq \gamma(y) \int_{B_r(y)}\widehat{G}_x(u_n)dx.
\end{align*}
 It follows, by \eqref{2eq64}, that
\begin{equation}\label{2eq66}
  \int_{B_r(y)}\widehat{G}_x(u_n)dx\leq \frac{C}{\gamma(y)} \longrightarrow 0 \ \ \text{as}\ \vert y\vert\longrightarrow +\infty.
\end{equation}
Therefore,  for arbitrary $\varepsilon > 0$, there exists $R_\varepsilon>0$ such that
\begin{equation}\label{2eq67}
 \sup_{\vert y\vert\geq R_\varepsilon} \int_{B_r(y)}  \widehat{G}_x(u_n)dx\leq \varepsilon,\ n\in \mathbb{N}.
\end{equation}
On the other side, by Theorem \ref{thm2}, we have the following compact embedding
$$W^{s,G_{x,y}}\left(B_{r+R_\varepsilon}(0)\right)\hookrightarrow L^{\widehat{G}_x}\left(B_{r+R_\varepsilon}(0)\right).$$
Hence, $u_n\longrightarrow 0$ in $L^{\widehat{G}_x}\left(B_{r+R_\varepsilon}(0)\right)$ which implies that
\begin{equation*}\label{2eq68}
  \int_{B_{r+R_\varepsilon}(0)}  \widehat{G}_x(u_n)dx \longrightarrow 0\ \ \text{as}\ \ n\longrightarrow +\infty.
\end{equation*}
Thus,
\begin{equation}\label{2eq69}
   \sup_{\vert y\vert< R_\varepsilon} \int_{B_{r}(y)}  \widehat{G}_x(u_n)dx \longrightarrow 0\ \ \text{as}\ \ n\longrightarrow +\infty.
\end{equation}
Putting together \eqref{2eq67} and \eqref{2eq69}, and applying Theorem \ref{lions}, we deduce that
$$u_n \longrightarrow 0\ \ \text{in}\ \ L^{\widehat{A}_x}(\mathbb{R}^d).$$
This ends the proof.
\section{Application: Proof of Theorem \ref{thmv}}
\begin{dfn}
    We say that a function $u\in \mathbb{E}$ is a weak solution of problem \eqref{PV} if it verifies
    \begin{equation}\label{2eq70} \int_{\mathbb{R}^d}\int_{\mathbb{R}^d}g_{x,y}\left( \frac{u(x)-u(y)}{\vert x-y\vert^{s}}\right) \frac{v(x)-v(y)}{\vert x-y\vert^{d+s}}dxdy +\int_{\mathbb{R}^d}V(x)g_{x,x}(u)vdx=\int_{\mathbb{R}^d}b(x)\vert u\vert^{p(x)-2}uvdx,
    \end{equation}
    for all $v\in \mathbb{E}$.
\end{dfn}

In view of assumptions $(g_1)-(g_3)$, the functional $I:\mathbb{E}\longrightarrow \mathbb{R}$ given by
 \begin{equation}\label{2eq71}
    I(u):=J_{G_{x,y}}(u)+\int_{\mathbb{R}^d}V(x)\widehat{G}_x(u)dx-\int_{\mathbb{R}^d}\frac{1}{p(x)} b(x)\vert u\vert^{p(x)}dx
 \end{equation}
 is well-defined. Moreover, $I\in C^{1}\left(\mathbb{E},\mathbb{R}\right)$ with the following derivative
 \begin{align}\label{2eq72}
    \langle I^{'}(u),v\rangle_{V}&:= \int_{\mathbb{R}^d}\int_{\mathbb{R}^d}g_{x,y}\left( \frac{u(x)-u(y)}{\vert x-y\vert^{s}}\right) \frac{v(x)-v(y)}{\vert x-y\vert^{d+s}}dxdy+\int_{\mathbb{R}^d}V(x)g_{x,x}(u)vdx\nonumber\\ &-\int_{\mathbb{R}^d}b(x)\vert u\vert^{p(x)-2}uvdx\nonumber\\
     &= \langle J^{'}_{s,G_{x,y}}(u),v\rangle+\int_{\mathbb{R}^d}V(x)g_{x,x}(u)vdx-\int_{\mathbb{R}^d}b(x)\vert u\vert^{p(x)-2}uvdx,\ \ \text{for all}\ u,v\in \mathbb{E}
 \end{align}
 where $\langle \cdot,\cdot\rangle_V$ is the duality brackets for the pair $\left( \mathbb{E}^*,\mathbb{E}\right)$.\\
  From \eqref{2eq71} and \eqref{2eq72}, it's clear that the weak solutions of problem \eqref{PV} are the critical points of the functional $I$.\\

  Now, we recall the following technical lemma which will be useful in the sequel.
\begin{lemma}[Lemma A.1, \cite{M61}]\label{lempx}
   Assume that $h_1 \in L^{\infty}(\mathbb{R}^d)$ such that $h_1 \geq 0$ and $h_1\not\equiv 0$,  a.a. in $\mathbb{R}^d$. Let $h_2:\mathbb{R}^d\longrightarrow \mathbb{R}$ be a bounded and measurable function such that $h_1h_2 \geq 1$, a.a. in $\mathbb{R}^d$. Then for any $ u\in L^{h_1(\cdot)h_2(\cdot)}(\mathbb{R}^d)$,
   $$\Vert\vert u\vert^{h_1(\cdot)} \Vert_{h_2(\cdot)}\leq \Vert u\Vert^{h_1^-}_{h_1(\cdot)h_2(\cdot)}+\Vert u\Vert^{h_1^+}_{h_1(\cdot)h_2(\cdot)}$$
   where $\displaystyle{h_1^-:=\inf_{x\in \mathbb{R}^d}h_1(x), \ h_1^+:=\sup_{x\in \mathbb{R}^d}h_1(x)}$ and $\Vert u\Vert_{h_2(\cdot)}:=\Vert u\Vert_{L^{\widehat{B}_x}(\mathbb{R}^d)}$, with $\widehat{B}_x(t)=\frac{1}{h_2(x)}\vert t\vert^{h_2(x)}.$
\end{lemma}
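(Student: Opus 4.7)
The plan is to exploit the unit-ball property of the Luxemburg norm together with a case analysis on whether $\lambda := \|u\|_{h_1(\cdot)h_2(\cdot)}$ lies above or below $1$. The proof does not invoke any deep structure of the Musielak framework; it is essentially a two-line comparison of modulars once the correct pointwise inequality is in hand.

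First I would dispose of the trivial case $\lambda=0$ (then $u\equiv 0$ a.e.\ and both sides vanish) and assume $\lambda>0$. By the Luxemburg definition of $\|\cdot\|_{h_1(\cdot)h_2(\cdot)}$ and the continuity of the power-type modular (passing from $\lambda+\varepsilon$ down to $\lambda$ via monotone/dominated convergence), the unit-ball property gives
$$\int_{\mathbb{R}^d}\Bigl(\tfrac{|u(x)|}{\lambda}\Bigr)^{h_1(x)h_2(x)}\,dx\le 1.$$
Setting $\mu:=\lambda^{h_1^-}+\lambda^{h_1^+}$, the goal $\bigl\||u|^{h_1(\cdot)}\bigr\|_{h_2(\cdot)}\le\mu$ is likewise, via the Luxemburg definition of $\|\cdot\|_{h_2(\cdot)}$, equivalent to verifying the modular bound
$$\int_{\mathbb{R}^d}\bigl(|u(x)|^{h_1(x)}/\mu\bigr)^{h_2(x)}dx=\int_{\mathbb{R}^d}\frac{|u(x)|^{h_1(x)h_2(x)}}{\mu^{h_2(x)}}\,dx\le 1.$$

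The heart of the argument is a pointwise estimate obtained from a dichotomy on $\lambda$.
\emph{Case 1} ($\lambda\ge 1$): since $h_1^+\ge h_1(x)$ and $\lambda\ge 1$, one has $\lambda^{h_1(x)}\le\lambda^{h_1^+}\le\mu$; raising to the power $h_2(x)>0$ gives $\mu^{h_2(x)}\ge\lambda^{h_1(x)h_2(x)}$ pointwise.
\emph{Case 2} ($\lambda<1$): since $h_1^-\le h_1(x)$ and $\lambda<1$, one has $\lambda^{h_1(x)}\le\lambda^{h_1^-}\le\mu$, and again $\mu^{h_2(x)}\ge\lambda^{h_1(x)h_2(x)}$. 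In either case,
$$\int_{\mathbb{R}^d}\frac{|u(x)|^{h_1(x)h_2(x)}}{\mu^{h_2(x)}}\,dx\le\int_{\mathbb{R}^d}\frac{|u(x)|^{h_1(x)h_2(x)}}{\lambda^{h_1(x)h_2(x)}}\,dx\le 1,$$
which closes the proof.

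The main (and really only) obstacle is bookkeeping-type: one must be careful that the modular equality in the unit-ball property is available at $\lambda=\|u\|$ for the chosen $\widehat{B}_x$, and that the factor $1/h_2(x)$ present in the paper's $\widehat B_x$ is reconciled with the standard variable-exponent modular $|t|^{h_2(x)}$ (using $h_1h_2\ge 1$, so that $1/h_2(x)\le h_1(x)$, to keep the comparison clean). The role of the assumption $h_1\in L^\infty$, $h_2$ bounded measurable, and $h_1h_2\ge 1$ is principally structural—ensuring that both $L^{h_1(\cdot)h_2(\cdot)}$ and $L^{\widehat B_x}$ are genuine Banach function spaces and that $h_1^-,h_1^+$ are finite—so none of these hypotheses intrude on the pointwise comparison, which is the substance of the result.
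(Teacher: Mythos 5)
The paper does not prove this lemma at all: it is imported verbatim as Lemma~A.1 of \cite{M61}, so there is no in-paper argument to compare against. Your proof is the standard one for that cited result and its core is correct: with $\lambda:=\Vert u\Vert_{h_1(\cdot)h_2(\cdot)}$ and $\mu:=\lambda^{h_1^-}+\lambda^{h_1^+}$, the dichotomy $\lambda\ge 1$ versus $\lambda<1$ gives $\lambda^{h_1(x)}\le\mu$ pointwise, hence $\mu^{h_2(x)}\ge\lambda^{h_1(x)h_2(x)}$, and the modular of $|u|^{h_1(\cdot)}/\mu$ is dominated by the modular of $u/\lambda$, which is at most $1$ by the unit-ball property (your monotone-convergence justification of the closed unit-ball property at $\lambda$ is also fine, and the hypotheses $h_1h_2\ge1$ with $h_1,h_2$ bounded guarantee $h_1,h_2>0$ a.e.\ and $0<h_1^-\le h_1^+<\infty$, as needed).

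The one point you flag but do not actually resolve is the normalization: the paper's $\widehat B_x(t)=\frac{1}{h_2(x)}|t|^{h_2(x)}$ carries the weight $1/h_2(x)$, and your observation that $1/h_2\le h_1$ does not by itself repair the comparison. If both Luxemburg norms carry their natural weights ($1/(h_1h_2)$ on the source, $1/h_2$ on the target), your pointwise bound yields $\int\frac{1}{h_2}\bigl(|u|^{h_1}/\mu\bigr)^{h_2}dx\le\int h_1\cdot\frac{1}{h_1h_2}\bigl(|u|/\lambda\bigr)^{h_1h_2}dx\le h_1^+$, not $\le1$, so the stated inequality with coefficient exactly $1$ only follows after absorbing a constant depending on $h_1^+$ (or by working with the unweighted modular $|t|^{p(x)}$, which is the convention of \cite{M61}). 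Since the weighted and unweighted norms are equivalent and the lemma is only used for coercivity and boundedness estimates, this discrepancy is harmless, but it is a defect in the paper's restatement rather than something your argument fixes; state explicitly which modular you use and, if it is the weighted one, carry the extra constant.
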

  \begin{rem}\label{rempx}
   We would like to mention that the function $\widehat{B}_x$ defined in the above lemma is a generalized N-function that satisfies and  \eqref{bf} and  the $\Delta_2$-condition.
  \end{rem}
  \begin{lemma}\label{lemv1}
     Assume that the assumptions \eqref{B}, $(g_1)-(g_5)$ and $(V_1)-(V_2)$ hold. Then, the functional $I$ is coercive.
  \end{lemma}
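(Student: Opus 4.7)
The strategy is to exploit the exponent gap $p^+<g^-$ built into the assumptions: I will show that the two positive terms in $I$ grow like $\|u\|_{\mathbb{E}}^{g^-}$, whereas the last (negative) term grows at most like $\|u\|_{\mathbb{E}}^{p^+}$, which forces $I(u)\to+\infty$ as $\|u\|_{\mathbb{E}}\to+\infty$. Throughout I will assume $\|u\|_{\mathbb{E}}\geq 1$, which is harmless since coercivity is an asymptotic property.

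For the positive part, Lemma~\ref{lem2}(1) yields $J_{s,G_{x,y}}(u)\geq\min\{[u]_{s,G_{x,y}}^{g^-},[u]_{s,G_{x,y}}^{g^+}\}$ and Lemma~\ref{lem33} gives $\int_{\mathbb{R}^d}V(x)\widehat{G}_x(u)\,dx\geq\min\{\|u\|_{(\mathbb{E})}^{g^-},\|u\|_{(\mathbb{E})}^{g^+}\}$. Summing these two estimates and invoking the elementary inequality $a^{g^-}+b^{g^-}\geq 2^{1-g^-}(a+b)^{g^-}$ together with the decomposition $\|u\|_{\mathbb{E}}=[u]_{s,G_{x,y}}+\|u\|_{(\mathbb{E})}$, I obtain, for $\|u\|_{\mathbb{E}}$ large enough, a lower bound of the form
\[
J_{s,G_{x,y}}(u)+\int_{\mathbb{R}^d}V(x)\widehat{G}_x(u)\,dx\;\geq\; c_{0}\,\|u\|_{\mathbb{E}}^{g^-}.
\]

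For the last term, the plan is first to apply the generalized H\"older inequality (Lemma~\ref{lemhol}) with the conjugate pair $(\delta(x),\delta'(x))$ to get
\[
\int_{\mathbb{R}^d}\frac{b(x)}{p(x)}\,|u|^{p(x)}\,dx \;\leq\; \frac{2}{p^-}\,\|b\|_{L^{\delta'(x)}(\mathbb{R}^d)}\,\bigl\||u|^{p(x)}\bigr\|_{L^{\delta(x)}(\mathbb{R}^d)},
\]
then to invoke Lemma~\ref{lempx} with $h_{1}=p$, $h_{2}=\delta$ to obtain $\||u|^{p(x)}\|_{L^{\delta(x)}}\leq\|u\|_{L^{\delta(x)p(x)}}^{p^-}+\|u\|_{L^{\delta(x)p(x)}}^{p^+}$. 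The exponent chain in the hypotheses guarantees $g^+\leq\delta(x)p(x)\leq g_*^-$, so the generalized N-function $\widehat{A}_x(t):=|t|^{\delta(x)p(x)}/(\delta(x)p(x))$ satisfies the conditions \eqref{cA}, \eqref{bf}, \eqref{2eq60} and \eqref{mla1b} required by Theorem~\ref{thm1}(2); this yields the continuous embedding $\mathbb{E}\hookrightarrow W^{s,G_{x,y}}(\mathbb{R}^d)\hookrightarrow L^{\widehat{A}_x}(\mathbb{R}^d)$ and therefore $\|u\|_{L^{\delta(x)p(x)}(\mathbb{R}^d)}\leq C\,\|u\|_{\mathbb{E}}$. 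Combining the two bounds produces
\[
I(u)\;\geq\; c_{0}\,\|u\|_{\mathbb{E}}^{g^-} \;-\; C_{1}\bigl(\|u\|_{\mathbb{E}}^{p^-}+\|u\|_{\mathbb{E}}^{p^+}\bigr),
\]
and coercivity follows at once from $p^+<g^-$.

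The main technical hurdle I expect is the uniform-in-$x$ verification that $\widehat{A}_x$ grows essentially more slowly than $\widehat{G}_x^*$ near infinity and strictly faster than $\widehat{G}_x$ near zero. This rests delicately on the full exponent chain assumed for $p,\delta,g^{\pm},g_{*}^-$ combined with the two-sided power estimates of Lemmas~\ref{lem1} and \ref{lem2} and the fractional boundedness condition \eqref{bf}; once those comparisons are in place, applying Theorem~\ref{thm1}(2) and assembling the inequalities above is routine.
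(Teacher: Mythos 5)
Your strategy is essentially the paper's: lower-bound the two positive terms by $c_0\|u\|_{\mathbb{E}}^{g^-}$ for large norm using Lemma~\ref{lem2}(1) and Lemma~\ref{lem33}, upper-bound the $b$-term by powers $\|u\|_{\mathbb{E}}^{p^\pm}$ via H\"older, Lemma~\ref{lempx}, and an embedding $\mathbb{E}\hookrightarrow L^{\delta(\cdot)p(\cdot)}(\mathbb{R}^d)$, then conclude from $p^+<g^-$. There is, however, a concrete gap in the embedding step. You invoke Theorem~\ref{thm1}(2), whose hypothesis~\eqref{mla1b} requires $\widehat{A}_x(t)/\widehat{G}_x(t)\to 0$ uniformly as $t\to 0$. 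With $\widehat{A}_x(t)=\frac{1}{\delta(x)p(x)}|t|^{\delta(x)p(x)}$ this ratio behaves near zero like $|t|^{\delta(x)p(x)-g^+}$ up to constants (by~\eqref{bf} and Lemma~\ref{lem1}(2)), and the stated exponent chain only gives $g^+\leq\delta^- p^-$, not strictly less. In the admissible boundary case $g^+=\delta^- p^-$ the exponent vanishes, the ratio stays bounded away from zero, and~\eqref{mla1b} fails, so Theorem~\ref{thm1}(2) does not apply. The paper instead cites Theorem~\ref{compact}, whose alternative hypothesis~\eqref{mla1} asks only for $\limsup_{|t|\to 0}\widehat{A}_x(t)/\widehat{G}_x(t)<\infty$ uniformly; this does follow from $g^+\leq\delta(x)p(x)$ together with~\eqref{bf} and Lemma~\ref{lem1}, so Theorem~\ref{compact} is the correct tool. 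Using Theorem~\ref{compact} also removes the intermediate (and unproved) step $\mathbb{E}\hookrightarrow W^{s,G_{x,y}}(\mathbb{R}^d)$ in your chain, since it gives the embedding into $L^{\widehat{A}_x}(\mathbb{R}^d)$ directly from $\mathbb{E}$. With that substitution your argument closes; your treatment of the positive terms (explicit $2^{1-g^-}$ constant and restriction to $\|u\|_{\mathbb{E}}\geq 1$) is in fact cleaner than the paper's.
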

  \begin{proof}
    Let $u\in \mathbb{E}$. Using Lemmas \ref{lem2}, \ref{lem33}, \ref{lempx}, Theorem \ref{compact} and condition \eqref{B}, we find
   \begin{align*}
    I(u)&=J_{G_{x,y}}(u)+\int_{\mathbb{R}^d}V(x)\widehat{G}_x(u)dx-\int_{\mathbb{R}^d}\frac{1}{p(x)} b(x)\vert u\vert^{p(x)}dx\\
    & \geq \max\left\lbrace\Vert u\Vert_{\mathbb{E}}^{g^-},\Vert u\Vert_{\mathbb{E}}^{g^+}\right\rbrace-\frac{1}{p^-}\Vert b\Vert_{\delta^{'}(\cdot)}\Vert \vert u\vert^{p(\cdot)}\Vert_{\delta(\cdot)}\\
    & \geq \max\left\lbrace\Vert u\Vert_{\mathbb{E}}^{g^-},\Vert u\Vert_{\mathbb{E}}^{g^+}\right\rbrace-\frac{\Vert b\Vert_{\delta^{'}(\cdot)}}{p^-}\left(\Vert u\Vert^{p^-}_{p(\cdot)\delta(\cdot)}+\Vert u\Vert^{p^+}_{p(\cdot)\delta(\cdot)}\right)\\
    & \geq \max\left\lbrace\Vert u\Vert_{\mathbb{E}}^{g^-},\Vert u\Vert_{\mathbb{E}}^{g^+}\right\rbrace-\frac{C\Vert b\Vert_{\delta^{'}(\cdot)}}{p^-}\left(\Vert u\Vert^{p^-}_{\mathbb{E}}+\Vert u\Vert^{p^+}_{\mathbb{E}}\right).
   \end{align*}
   It follows, since $p^-\leq p^+< g^+$, 
  $$ I(u_n)\longrightarrow +\infty\ \  \text{when}\ \ \Vert u_n\Vert_{\mathbb{E}}\longrightarrow +\infty.$$
   This ends the proof.
  \end{proof}
  \begin{lemma}\label{lemps}
   Assume that the assumptions, \eqref{B}, $(g_1)-(g_5)$ and $(V_1)-(V_2)$ hold. Let  $\lbrace u_n\rbrace_{n\in \mathbb{N}}$ be a sequence in $\mathbb{E}$ such that
   \begin{equation}\label{2eq80}
       I(u_n)\longrightarrow c\ \ \text{and}\ I^{'}(u_n)\longrightarrow 0.
   \end{equation}
   Then, up to subsequence, $u_n$ converge in $\mathbb{E}$.
  \end{lemma}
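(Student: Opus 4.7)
The strategy is the standard Palais--Smale scheme adapted to the weighted Musielak frame, using Lemma \ref{lemv1} for boundedness, Theorem \ref{compact} for the passage to the compact range, and Proposition \ref{s+} together with Lemma \ref{lem55} for the final upgrade to strong convergence. First, from the coercivity established in Lemma \ref{lemv1} and boundedness of $I(u_n)$, the sequence $\lbrace u_n\rbrace$ is bounded in $\mathbb{E}$; as $\mathbb{E}$ inherits reflexivity from $W^{s,G_{x,y}}(\mathbb{R}^d)$ (Remark \ref{rem2}), we may extract a subsequence with $u_n\rightharpoonup u$ in $\mathbb{E}$. Applying Theorem \ref{comp1} and Theorem \ref{compact} to the generalized $N$-function $\widehat{A}_x(t):=|t|^{p(x)\delta(x)}/(p(x)\delta(x))$, whose assumptions \eqref{cA}, \eqref{2eq60}, and \eqref{mla1b} are granted by the chain $g^+\le p^-\delta^-\le p^+\delta^+\le g_*^-$, we get the strong convergences $u_n\to u$ in $L^{\widehat{G}_x}(\mathbb{R}^d)$ and in $L^{p(\cdot)\delta(\cdot)}(\mathbb{R}^d)$.

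Next, I would write the key splitting
\begin{equation*}
\langle I'(u_n)-I'(u),u_n-u\rangle_V=\Phi_n+T_V(n)-T_b(n),
\end{equation*}
where
\begin{equation*}
\Phi_n:=\langle J'_{s,G_{x,y}}(u_n)-J'_{s,G_{x,y}}(u),u_n-u\rangle,\qquad T_V(n):=\int_{\mathbb{R}^d} V(x)\bigl(g_{x,x}(u_n)-g_{x,x}(u)\bigr)(u_n-u)\,dx,
\end{equation*}
and $T_b(n)$ is the analogous term built from $b(x)|u|^{p(x)-2}u$. The left-hand side tends to $0$ since $I'(u_n)\to 0$ in $\mathbb{E}^*$ and $u_n-u\rightharpoonup 0$ in $\mathbb{E}$. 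Combining the variable-exponent H\"older inequality, Lemma \ref{lempx} applied with $h_2=\delta(\cdot)$, hypothesis \eqref{B}, and the strong convergence $u_n\to u$ in $L^{p(\cdot)\delta(\cdot)}(\mathbb{R}^d)$, one shows $T_b(n)\to 0$. By the monotonicity estimate of Lemma \ref{lem55}, both $\Phi_n$ and $T_V(n)$ are non-negative, so each must tend to $0$ separately.

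From $\Phi_n\to 0$ and Proposition \ref{s+}, we obtain $[u_n-u]_{s,G_{x,y}}\to 0$. For the weighted piece, Lemma \ref{lem55} gives the pointwise bound
\begin{equation*}
V(x)\bigl(g_{x,x}(u_n)-g_{x,x}(u)\bigr)(u_n-u)\ \ge\ 4\,V(x)\,\widehat{G}_x\!\left(\frac{u_n-u}{2}\right),
\end{equation*}
hence $T_V(n)\to 0$ forces the weighted modular of $(u_n-u)/2$ to vanish. Combined with the modular/norm equivalence in Lemma \ref{lem33}, this yields $\|u_n-u\|_{(\mathbb{E})}\to 0$. Adding the two pieces one concludes $\|u_n-u\|_{\mathbb{E}}\to 0$.

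The main obstacle will be the rigorous verification that $T_b(n)\to 0$: the exponent $p(x)$ is variable and $b$ is only in $L^{\delta'(\cdot)}$ without sign, so one cannot exploit monotonicity. The Nemytskii operator $u\mapsto|u|^{p(\cdot)-2}u$ must be shown to be sequentially continuous from $L^{p(\cdot)\delta(\cdot)}(\mathbb{R}^d)$ into the appropriate dual variable-exponent Lebesgue space; controlling this via Lemma \ref{lempx} and an elementary inequality of the form $\bigl||a|^{p-2}a-|b|^{p-2}b\bigr|\le C(|a|+|b|)^{p-1}$ (together with the boundedness of $\lbrace u_n\rbrace$ in $L^{p(\cdot)\delta(\cdot)}$) is the only nontrivial technical point. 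Every remaining step reduces to a direct application of the results collected in Sections~2--4.
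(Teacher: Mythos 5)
Your proposal is correct and follows the same basic strategy as the paper: coercivity (Lemma~\ref{lemv1}) gives boundedness of $\{u_n\}$, reflexivity of $\mathbb{E}$ yields a weakly convergent subsequence, the test with $u_n-u$ is split into the seminorm piece, the $V$-weighted piece and the $b$-piece, the $b$-piece is killed by the variable-exponent H\"older inequality together with the compact embedding $\mathbb{E}\hookrightarrow L^{p(\cdot)\delta(\cdot)}(\mathbb{R}^d)$ (Theorem~\ref{compact}), and the monotonicity estimate of Lemma~\ref{lem55} plus the $(S_+)$-property close the argument. One refinement you make that the paper glosses over: the paper, after obtaining $\limsup_n\langle J'_{s,G_{x,y}}(u_n)-J'_{s,G_{x,y}}(u),u_n-u\rangle\le 0$, immediately cites Proposition~\ref{s+} to conclude convergence \emph{in $\mathbb{E}$}, whereas $(S_+)$ is stated only for $W^{s,G_{x,y}}(\mathbb{R}^d)$ and does not by itself control the $V$-weighted modular $\|\cdot\|_{(\mathbb{E})}$. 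Your argument fills this in explicitly, by observing that both $\Phi_n$ and $T_V(n)$ are nonnegative and hence go to zero individually, and that $T_V(n)\to 0$ together with Lemma~\ref{lem55} (taken at $y=x$), the $\Delta_2$-bound $\widehat G_x(u_n-u)\le 2^{g^+}\widehat G_x\bigl(\tfrac{u_n-u}{2}\bigr)$ from Lemma~\ref{lem1}, and the norm--modular comparison of Lemma~\ref{lem33} force $\|u_n-u\|_{(\mathbb{E})}\to 0$. That extra step is exactly what is needed to upgrade convergence in $W^{s,G_{x,y}}$ to convergence in $\mathbb{E}$, so your version is both faithful to the paper's scheme and slightly more complete.
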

  \begin{proof}
    Let  $\lbrace u_n\rbrace_{n\in \mathbb{N}}$ be a sequence in $\mathbb{E}$ verifying \eqref{2eq80}. It follows, by Lemma \ref{lemv1}, that $u_n$ is bounded in $\mathbb{E}$. Since $\mathbb{E}$ is a reflexive space, up to subsequence still denoted by $u_n$, there exists $u\in \mathbb{E}$ such that
    \begin{equation}\label{2eq81}
        u_n \rightharpoonup u\ \text{in}\ \mathbb{E}.
    \end{equation}
    Therefore, it remains to show that $u_n\longrightarrow u$ in $\mathbb{E}$.\\
 In light of \eqref{2eq80} and \eqref{2eq81}, we can see that
    \begin{equation}\label{2eq82}
     \langle I^{'}(u_n)-I^{'}(u),u_n-u\rangle_V=o_n(1).
    \end{equation}
    On the other side, using $(V_1)$, \eqref{2eq72}, Lemmas \ref{lem1}, \ref{lem55}, \ref{lempx}, Remark \ref{rempx}, and H\"older inequality, we find that
  \begin{align}\label{2eq83}
  \langle I^{'}(u_n)-I^{'}(u),u_n-u\rangle_V&=\langle J^{'}_{s,G_{x,y}}(u_n)-J^{'}_{s,G_{x,y}}(u),u_n-u\rangle+\int_{\mathbb{R}^d}V(x)\left[g_{x,x}(u_n)-g_{x,x}(u)\right](u_n-u)dx\nonumber\\
& -\int_{\mathbb{R}^d}b(x)\left[\vert u_n\vert^{p(x)-2}u_n-\vert u\vert^{p(x)-2}u\right](u_n-u)dx\nonumber\\
& \geq \langle J^{'}_{s,G_{x,y}}(u_n)-J^{'}_{s,G_{x,y}}(u),u_n-u\rangle+4\int_{\mathbb{R}^d}V(x)\widehat{G}_x\left(\frac{u_n-u}{2}\right)dx\nonumber\\
&-\int_{\mathbb{R}^d}b(x)\left(\vert u_n\vert^{p(x)-1}+\vert u\vert^{p(x)-1}\right)(u_n-u)dx\nonumber\\
& \geq \langle J^{'}_{s,G_{x,y}}(u_n)-J^{'}_{s,G_{x,y}}(u),u_n-u\rangle+4V_0\int_{\mathbb{R}^d}\widehat{G}_x\left(\frac{u_n-u}{2}\right)dx\nonumber\\
&-\int_{\mathbb{R}^d}b(x)\left(\vert u_n\vert+\vert u\vert\right)^{p(x)-1}(u_n-u)dx\nonumber\\
& \geq \langle J^{'}_{s,G_{x,y}}(u_n)-J^{'}_{s,G_{x,y}}(u),u_n-u\rangle\nonumber\\
&+4V_0\min \left\lbrace \left\Vert\frac{u_n-u}{2}\right\Vert_{L^{\widehat{G}_x}(\Omega)}^{g^-},\left\Vert \frac{u_n-u}{2}\right\Vert_{L^{\widehat{G}_x}(\Omega)}^{g^+}\right\rbrace\nonumber\\
& -\Vert b\Vert_{\delta^{'}(\cdot)}\left\Vert \left(\vert u_n\vert+\vert u\vert\right)^{p(x)-1}\right\Vert_{\frac{\delta(\cdot) p(\cdot)}{p(\cdot)-1}}\Vert u_n-u\Vert_{\delta(\cdot) p(\cdot)}\nonumber\\
& \geq \langle J^{'}_{s,G_{x,y}}(u_n)-J^{'}_{s,G_{x,y}}(u),u_n-u\rangle\nonumber\\
&+4V_0\min \left\lbrace \left\Vert\frac{u_n-u}{2}\right\Vert_{L^{\widehat{G}_x}(\Omega)}^{g^-},\left\Vert \frac{u_n-u}{2}\right\Vert_{L^{\widehat{G}_x}(\Omega)}^{g^+}\right\rbrace\nonumber\\
& -\Vert b\Vert_{\delta^{'}(\cdot)}\left(\Vert  \vert u_n\vert +\vert u\vert\Vert^{p^--1}_{\delta(\cdot) p(\cdot)}+\Vert \vert u_n\vert +\vert u\vert\Vert^{p^+-1}_{\delta(\cdot) p(\cdot)}\right)\Vert u_n-u\Vert_{\delta(\cdot) p(\cdot)}.
  \end{align}
  Combining \eqref{2eq82} and \eqref{2eq83}, and using Theorem \ref{compact}, we get
  $$\limsup_{n\rightarrow +\infty}\langle J^{'}_{s,G_{x,y}}(u_n)-J^{'}_{s,G_{x,y}}(u),u_n-u\rangle\leq 0.$$
  It follows, by \eqref{2eq80} and proposition \ref{s+}, that
  $$u_n\longrightarrow u\  \text{in }\ \mathbb{E}.$$
  Thus, the proof.
  \end{proof}
  \begin{proof}[Proof of Theorem \ref{thmv}] Since $I\in C^{1}\left( \mathbb{E}, \mathbb{R}\right)$, from Lemmas \ref{lemv1} and \ref{lemps}, it yields that the global minimum of the functional $I$ is achieved on  $\mathbb{E}$. Namely, there exists $u\in \mathbb{E}$  such that
  $$c=I(u)=\min_{v\in \mathbb{E}} I(v).$$
  It follows that $u$ is a critical point of $I$, that is, $I^{'}(u)=0$. Hence, $u\in \mathbb{E}$ is a weak solution for problem \eqref{PV}. To the end of the proof it remains to show that $u\not\equiv 0$. In this way, let $v\in \mathbb{E}\setminus\lbrace 0\rbrace$ and $t>0$. By Lemmas \ref{lem2} and \ref{lem33}, we obtain
  \begin{align*}
      c&\leq I(tv)=J_{G_{x,y}}(tv)+\int_{\mathbb{R}^d}V(x)\widehat{G}_x(tv)dx-\int_{\mathbb{R}^d}\frac{1}{p(x)} b(x)\vert tv\vert^{p(x)}dx\\
      & \leq \max\left\lbrace \Vert tv \Vert_{\mathbb{E}}^{g^-},\Vert tv \Vert_{\mathbb{E}}^{g^+} \right\rbrace-\frac{\min\lbrace t^{p^-},t^{p^+}\rbrace}{p^+}\int_{\mathbb{R}^d} b(x)\vert v\vert^{p(x)}dx.
  \end{align*}
  Therefore, since $p^-\leq p^+<g^-\leq g^+$,
  $$c\leq I(tv)<0,\ \ \text{for}\ t \ \text{small enough}.$$
  Thus, $u\in \mathbb{E}\setminus\lbrace 0\rbrace$. This ends the proof.
  \end{proof}
  \section{Final comments}
  In this section, we give some particular cases of the general fractional
  Musielak-Sobolev space. Then,  we present some interesting
  open questions.
  \subsection{Some examples}
  The novelty of this work is that our theorems are valid for a large class of Sobolev spaces and equations. To illustrate the degree of
the generality of our results, let us consider some cases depending
on the generalized N-function $G_{x,y}$ that are covered in this
article.
  \begin{enumerate}
      \item[$(1)$] Let
      $G_{x,y}(t)=\frac{1}{p(x,y)}\vert t\vert^{p(x,y)}$, for all $(x,y)\in \Omega\times\Omega$ and $t\in \mathbb{R}$, where $p:\Omega\times\Omega\longrightarrow (1,+\infty)$ is a continuous function satisfying
      $$1<p^-\leq p(x,y)\leq p^+,\ \text{for all}\ (x,y)\in \Omega\times\Omega.$$
      In this case the function $G_{x,y}$ satisfies the assumptions $(g_1)-(g_5)$ and \eqref{bf}. Moreover,
the Musielak-Sobolev space $W^{s,G_{x,y}}(\Omega)$ becomes the fractional Sobolev space with variable exponent $W^{s,p(\cdot,\cdot)}(\mathbb{R}^d)$ and the fractional  Musielak $g_{x,y}$-Laplace operator turns into the fractional $p(x,y)$-Laplacian. Therefore, our results (Theorems \ref{thm1}, \ref{comp1}, \ref{compact}, \ref{lions}, \ref{thms}, and \ref{thmv}) remain valid for fractional Sobolev space with variable exponent which are related to the main results shown in \cite{M33,M60,M96}.
\item[$(2)$] Let $G_{x,y}(t)=M(t)$, for all $(x,y)\in \Omega\times\Omega$ and $t\in \mathbb{R}$, where $\displaystyle{M(t):=\int_{0}^{\vert t\vert}m(\tau)d\tau}$ is an N-function (for  definition see \cite{M34}) satisfying the following conditions
\begin{equation*}\label{3eqg}
 1<m^-\leq \frac{m(t)t}{M(t)}\leq m^+<m^-_*:=\frac{dm^-}{d-sm^-}<+\infty,\ \text{for all}\ t>0
\end{equation*}
and
\begin{equation*}\label{g55}
  \int_{0}^{1} \frac{M^{-1}(\tau)}{\tau^{\frac{d+s}{d}}} d\tau<+\infty\ \ \text{and}\ \ \int_{1}^{+\infty} \frac{M^{-1}(\tau)}{\tau^{\frac{d+s}{d}}} d\tau=+\infty.
\end{equation*}
 It is clear that the generalized N-function $G_{x,y}$ satisfies the assumptions $(g_1)-(g_5)$ and \eqref{bf}. This implies that the fractional  Musielak-Sobolev space $W^{s,G_{x,y}}(\Omega)$ recover the fractional Orlicz-Sobolev space $W^{s,M}(\Omega)$ which firstly introduced in \cite{M28}. Thus, our principal Theorems \ref{thm1}, \ref{comp1}, \ref{compact}, \ref{lions}, \ref{thms}, and \ref{thmv} extend the results  obtained in \cite{M47,M13,M97} for the fractional Orlicz-Sobolev space  $W^{s,M}(\Omega)$.
 \item[$(3)$]  Let $G_{x,y}(t)=\frac{1}{p}\vert t\vert^p+\frac{1}{q}b(x,y) \vert t\vert^q $, for all $(x,y)\in \Omega\times\Omega$ and $t\in \mathbb{R}$, where  $b\in L^{\infty}(\Omega\times\Omega)$ is a non-negative symmetric function and $1<p<q<d$. Thus, we can rewrite the fractional Musielak $g_{x,y}$-Laplace operator as follows:
 \begin{equation}\label{2eq100}
(-\Delta)^{s}_{g_{x,y}}u:=(-\Delta)^s_pu+(-\Delta )^s_{b,q}u,
\end{equation}
where (up to multiplicative constant) $(-\Delta)^s_p$ is the so-called fractional p-Laplacian operator and
$(-\Delta )^s_{b,q}$ is the anisotropic fractional p-Laplacian defined as
$$(-\Delta )^s_{b,q}u(x):=\text{p.v.}\int_{\mathbb{R}^d}b(x,y)\frac{\vert u(x)-u(u)\vert^{q-2}(u(x)-u(y))}{\vert x-y\vert^{d+sq}}\frac{dy}{\vert x-y\vert^{d+s}},\ \ \text{for all}\ x\in \mathbb{R}^d
$$
 where p.v. is a commonly used abbreviation for "in the principle value sense".
 We would like to mention that problem \eqref{PV} with the operator \eqref{2eq100} is called nonlocal double phase problem, see \cite{M5}.
  \end{enumerate}
  \subsection{Perspectives and open problems}
  We summarize some open problems which are deduced from our work as follows:
  \begin{enumerate}
     \item[$(1)$] We would like to mention that Theorem \ref{thm1} is not optimal. It is worth noting that  the authors in  \cite{Cianchi5} proved  the optimal continuous
      embedding theorems for the fractional Orlicz-Sobolev spaces.
     Hence, it is a natural question to  see if the optimal embedding theorems obtained in \cite{Cianchi5}  can be extended to the fractional  Musielak-Sobolev spaces.
     \item[$(2)$] The $\Delta_2$-condition ( $(g_4)$ ) and assumption \eqref{bf}  played a key role in the proof of the continuous embedding theorem in $\mathbb{R}^{d}$ (Theorem
     \ref{thm1}).  Note that, Theorem \ref{thm1} is the basic tool in proving  Theorems \ref{comp1},
\ref{compact}, \ref{lions} and \ref{thms}. We do not have any
knowledge about the proof of Theorem \ref{thm1} without
$\Delta_2$-condition ( $(g_4)$ ) and assumption \eqref{bf}.
  \end{enumerate}

\end{document}